\definecolor{dgreen}{rgb}{0.0,0.5,0.0}
\definecolor{dorange}{rgb}{1.0,0.5,0.0}
\definecolor{dpurple}{rgb}{0.490196, 0.14902, 0.803922}
\def\@rst #1 #2other{#1}
\newcommand\MR[1]{\relax\ifhmode\unskip\spacefactor3000 \space\fi
  \MRhref{\expandafter\@rst #1 other}{#1}}
\newcommand{\MRhref}[2]{\href{http://www.ams.org/mathscinet-getitem?mr=#1}{MR#2\
}}
\newif\ifhyper\IfFileExists{hyperref.sty}{\hypertrue}{\hyperfalse}
\ifhyper\usepackage[pdftitle={Fractional Gaussian Fields},
\newif\ifdraft
\def\note#1/{\ifdraft {\bf [#1]}\fi}
\long\def\comment#1{}
\numberwithin{equation}{section}
\numberwithin{figure}{section}
\newtheorem{theorem}{Theorem}
\numberwithin{theorem}{section}
\newtheorem{lemma}[theorem]{Lemma}
\newtheorem{proposition}[theorem]{Proposition}
\theoremstyle{remark}\newtheorem{definition}[theorem]{Definition}
\theoremstyle{remark}\newtheorem{remark}[theorem]{Remark}
\newcommand{\D}{\mathbb{D}} \newcommand{\E}{\mathbb{E}}
 \newcommand{\N}{\mathbb{N}}
\newcommand{\Z}{\mathbb{Z}} 
 \newcommand{\R}{\mathbb{R}}
 \newcommand{\s}{\mathcal{S}}
\newcommand{\La}{\Delta}
\newcommand{\Lap}{(-\Delta)}
\newcommand{\Hs}{\frac{s}{2}}
\newcommand{\dH}{\dot{H}}
\newcommand{\Hu}{H}
\newcommand{\wh}{\widehat}
\newcommand{\wt}{\widetilde}
\newcommand{\one}{{\mathbf{1}}}
\newcommand{\ang}[1]{\langle #1 \rangle}
\newcommand{\eps}{\epsilon}
\newcommand{\resop}{\mathfrak{R}}
\newcommand{\nablaa}{\nabla_{\!\!\alpha}}
\DeclareMathOperator{\dist}{dist}
\DeclareMathOperator{\Har}{Har}
\DeclareMathOperator{\SLE}{SLE}
\DeclareMathOperator{\FGF}{FGF}
\DeclareMathOperator{\DFGF}{DFGF}
\DeclareMathOperator{\EFGF}{EFGF}
\DeclareMathOperator{\Cov}{Cov}
\DeclareMathOperator{\Var}{Var}
\DeclareMathOperator{\Res}{Res}
\begin{document}
\title{Fractional Gaussian fields: a survey}
\author{\normalsize\textsc{{Asad Lodhia\footnote{Partially supported by NSF
        grant DMS 1209044.}}, \, {Scott
      Sheffield\footnotemark[1]},\, {Xin Sun\footnotemark[1]},\, {Samuel S.\
      Watson\footnote{Supported by NSF GRFP award number 1122374.}}}}
\date{}
\maketitle

\thispagestyle{empty}

\begin{abstract}
  We discuss a family of random fields indexed by a parameter $s\in \R$
  which we call the {\em fractional Gaussian fields}, given by
  \[
  \FGF_s(\R^d)=(-\Delta)^{-s/2} W,
  \]
  where $W$ is a white noise on $\mathbb R^d$ and $(-\Delta)^{-s/2}$ is the
  fractional Laplacian.  These fields can also be parameterized by their
  Hurst parameter $H = s-d/2$.  In one dimension, examples of
  $\FGF_s$ processes include Brownian motion ($s = 1$) and fractional
  Brownian motion ($1/2 < s < 3/2$). Examples in arbitrary dimension
  include white noise ($s = 0$), the Gaussian free field ($s = 1$), the
  bi-Laplacian Gaussian field ($s = 2$), the log-correlated Gaussian field
  ($s = d/2$), L\'evy's Brownian motion ($s = d/2 + 1/2$), and
  multidimensional fractional Brownian motion ($d/2 < s < d/2 + 1$).  These
  fields have applications to statistical physics, early-universe
  cosmology, finance, quantum field theory, image processing, and other
  disciplines.

  We present an overview of fractional Gaussian fields including covariance
  formulas, Gibbs properties, spherical coordinate decompositions,
  restrictions to linear subspaces, local set theorems, and other basic
  results. We also define a discrete fractional Gaussian field and explain how the
  $\FGF_s$ with $s \in (0,1)$ can be understood as a long range Gaussian
  free field in which the potential theory of Brownian motion is replaced
  by that of an isotropic $2s$-stable L\'evy process.
\end{abstract}

\newpage
\enlargethispage{1cm}
\renewcommand{\contentsname}{}
\parskip = 0.0 in
\begin{footnotesize}
\tableofcontents
\end{footnotesize}
\parskip = 0.1 in
\newpage

\section{Introduction}\label{sec:Introduction}

\makeatletter{}\begin{figure}[hpt]
  \center
  \subfigure[White Noise, $s=0$]{\includegraphics[width=0.45\textwidth]{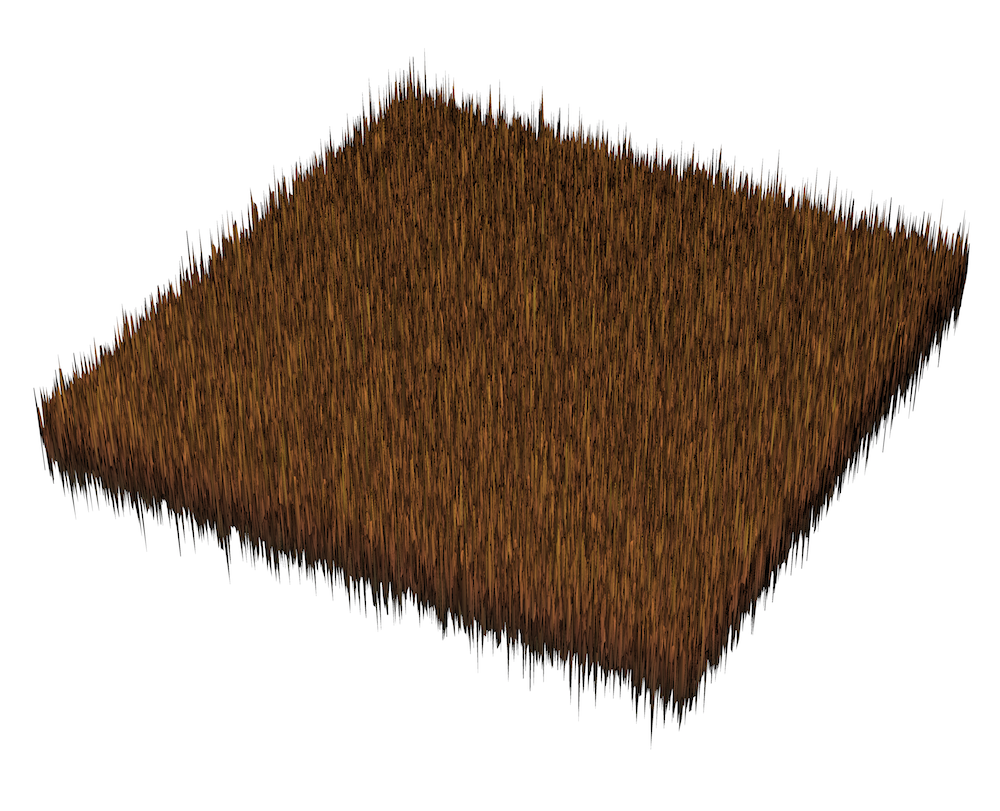}}
  \subfigure[GFF,
  $s=1$]{\includegraphics[width=0.45\textwidth]{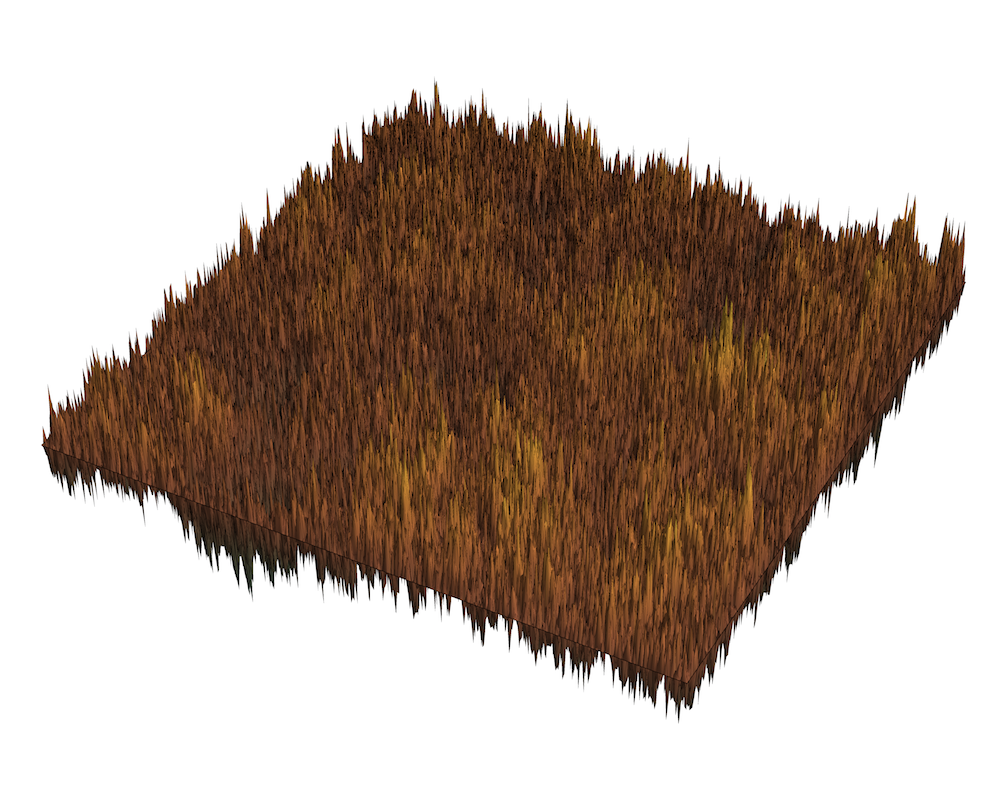}}  \\
  \subfigure[Bi-Laplacian,
  $s=2$]{\includegraphics[width=0.45\textwidth]{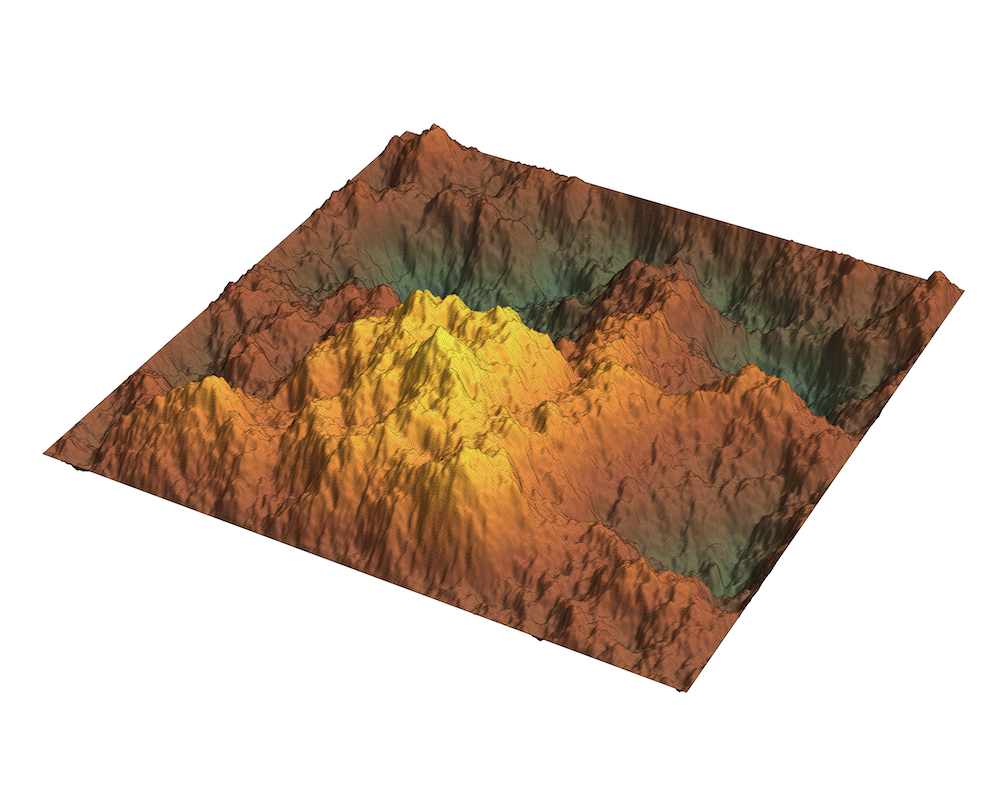}}
  \subfigure[$\FGF_s$ with $s=3$]{\includegraphics[width=0.45\textwidth]{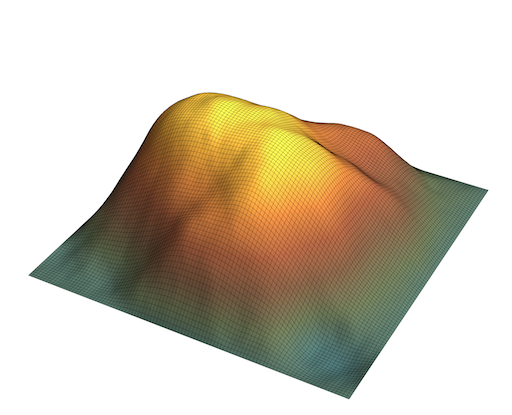}}
  \caption{\label{fig:pictures} Surface plots of discrete fractional
    Gaussian fields as defined on a bounded domain $D = [0,1]^2 \subset \R^2$ with zero boundary conditions, where $s=0$, 1, 2, and 3
    respectively. These discrete random functions are defined on a
    $500\times500$ grid and linearly interpolated. The corresponding
    continuum limit, $\FGF_s([0,1]^2)$, is not a function when $s=0$ or
    $s=1$, is $\alpha$-H\"older continuous for all $\alpha<1$ when $s=2$,
    and has $\alpha$-H\"older continuous first-order derivatives for all
    $\alpha<1$ when $s=3$.}
\end{figure}

\begin{figure}[hpt]
  \center
 \includegraphics[width=3in]{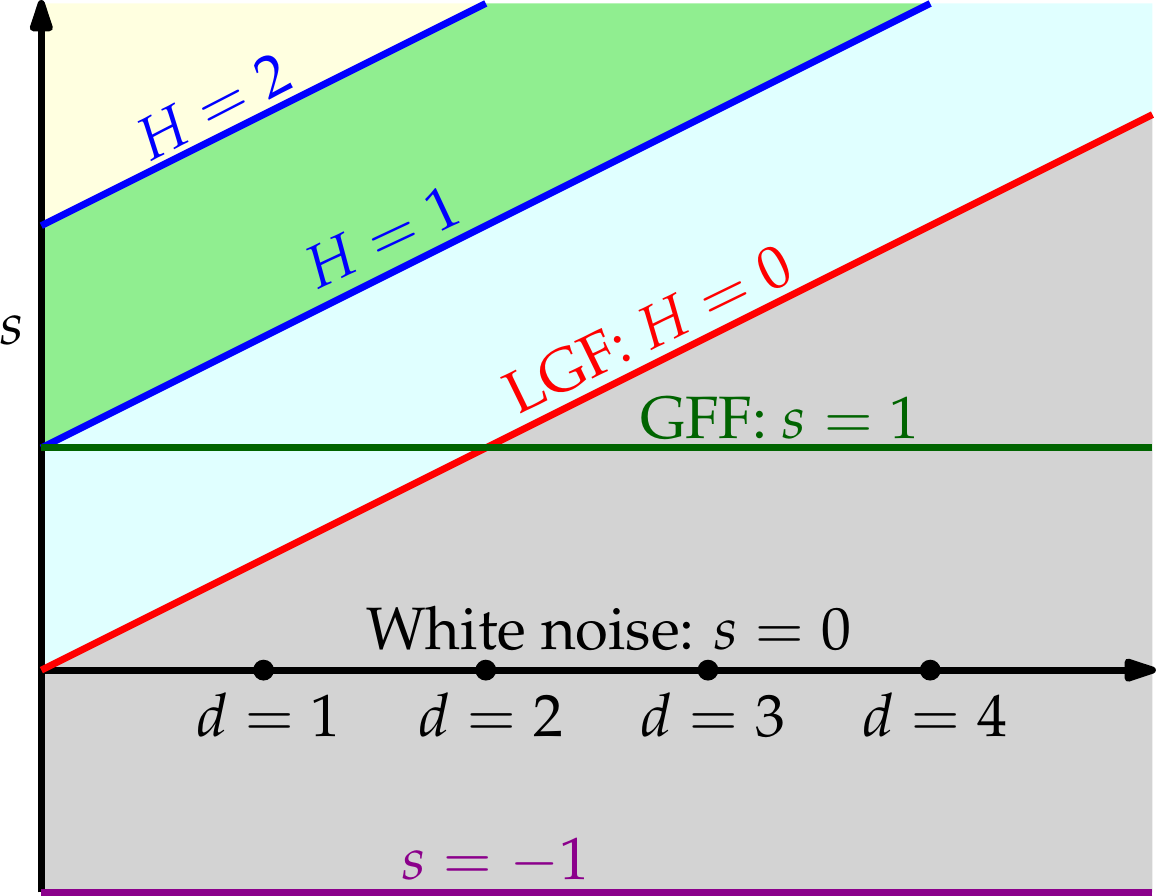}
 \caption{\label{fig:indexgraph} When $\Hu < 0$ (grey shaded region) the
   FGF is defined as a random tempered distribution, not a function.
   When $\Hu\in(0,1)$, the FGF is defined as a random
   continuous function modulo a global additive constant.  Generally, for
   integers $k>0$ and $\Hu\in(k,k+1)$, the FGF is a translation invariant random
   $k$-times-differentiable function defined modulo polynomials of
   degree $k$.  For integer $\Hu=k \geq 0$,
   the FGF is a random $(k-1)$-times-differentiable function (or distribution if $k=0$)
defined modulo polynomials of degree $k$.
             }
\end{figure}

The $d$-dimensional \textbf{fractional Gaussian field} $h$ on $\R^d$ with
index $s\in \R$ (abbreviated as $\FGF_s(\R^d)$) is given by
\begin{equation} \label{eqn:fgflaplaciandef}
  h := (-\Delta)^{-s/2} W,
\end{equation}
where $W$ is a real white noise on $\R^d$ and $ (-\Delta)^{-s/2}$ is the
fractional Laplacian on $\R^d$. In Sections \ref{sec:preliminaries} and
\ref{sec:whole-space}, we will review classical and recent literature on
the fractional Laplacian (see, e.g., \cite{landkof1972foundations,
  silvestre2007regularity, caffarelli2008regularity, chang2011fractional})
and show how to assign rigorous meaning to \eqref{eqn:fgflaplaciandef}. 

Our goal is to provide a mathematically rigorous, unified, and accessible
account of the $\FGF_s(\R^d)$ processes, treating the full range of
values $s\in \R$ and $d \in \N$. This paper is fundamentally a survey,
but we also present several basic facts that we have not found
articulated elsewhere in the literature.  Many of these are
generalizations of classical results that had previously only been
formulated for specific $d$ and $s$ values.

We hope that this survey will increase the circulation of basic information
about fractional Gaussian fields in the mathematical community.  For
example, the vocabulary and content of the following statements should
arguably be well known to probabilists, but the authors were unaware of
much of it until recently: 
\begin{itemize}
\item In dimension $3$, the Gaussian field with logarithmic correlations
  has been used as an approximate model for the gravitational potential of
  the early universe; its Laplacian is a $\FGF_{-1/2}(\R^3)$ and has been
  used to model the perturbation from uniformity of the mass/energy density
  of the early universe\footnote{An overview of this story appears in the
    reference text \cite{2003moco.book.....D} and a few additional notes
    and references appear in \cite{duplantier2013log}.}.
\item In dimension $4$, the so-called bi-Laplacian field has logarithmic
  correlations, and its Laplacian is white noise.
\item In any dimension, L\'evy Brownian motion can be defined as a random
  continuous function whose restriction to any line has the law of a
  Brownian motion (modulo additive constant). In dimension $5$, the
  Laplacian of L\'evy Brownian motion is the Gaussian free field.
\end{itemize}
We also hope that this text will be a useful reference for experts in the
study of Gaussian fields; to this end, we provide a robust account of the
regularity of FGF fields, the long and short range correlation formulae,
conditional expectations given field values outside of fixed domains,
the Fourier transforms and spherical coordinate decompositions of the FGF,
and various bounded-domain definitions of the FGF.

The family of fractional Gaussian fields includes several well-known Gaussian
fields such as Brownian motion ($d=1$ and $s=1$), white noise ($s=0$), the
Gaussian free field ($s=1$), and the log-correlated Gaussian field ($s =
\frac{d}{2}$).

Given $s\in \R$ and $d \geq 1$, the \textbf{Hurst parameter} $H$ is defined
by
\begin{equation}
  \label{eqn:hdef}
  \Hu \colonequals s-\frac{d}{2}.
\end{equation}
The Hurst parameter describes a scaling relation satisfied by $h\sim
\FGF_s(\R^d)$: for $a>0$, the field $x\mapsto h(a x)$ has the same law as
$x\mapsto a^\Hu h(x)$.\footnote{When $s$ and $d$ are such that $h$ is a
  random tempered distribution, but not a random function, we interpret
  $x\mapsto h(ax)$ as a distribution via $(x\mapsto h(ax),\phi) =
  a^{-d}(h,x\mapsto \phi(x/a))$.} Fields satisfying such a relation
are said to be \textbf{self-similar}, and they arise naturally in the
study of statistical physics models \cite{newman1980self}. The FGFs
belong to a more general class of translation-invariant self-similar
Gaussian random fields which were investigated and classified in
\cite{dobrushin1979gaussian}.  When $d=1$ and $\Hu \in (0,1)$, the
$\FGF_s(\R^d)$ process is commonly known as {\bf fractional Brownian motion
  with Hurst parameter $H$}, and is the subject of an extensive literature
(see the survey \cite{cohenfracfield}).  Brownian motion itself corresponds
to $H=1/2$ and $s=1$.

The law of a Brownian motion or fractional Brownian motion $B_t$, indexed
by $t \in \R$ and defined so that $B_0 = 0$, is not translation invariant.
However, the law of Brownian motion is translation invariant if we consider
Brownian motion as a random process defined only modulo a global additive
constant.  In other words, Brownian motion has stationary increments.
Similarly, the indefinite integral of a Brownian motion can be interpreted,
in a translation invariant way, as a random function defined modulo the
space of linear functions.  We generally interpret all of the $\FGF_s$
processes as translation invariant random distributions, but in some cases
they are defined modulo a space of polynomials. More precisely, when
$\Hu < 0$, $\FGF_s(\R^d)$ is a translation invariant random tempered
distribution (that is, a generalized function) on $\R^d$. When $\Hu > 0$,
$\FGF_s(\R^d)$ is a translation invariant random element of the space
$C^{\lceil \Hu \rceil - 1}(\R^d)$ modulo the space of polynomials on $\R^d$
of degree no greater than $\lfloor \Hu \rfloor$. This means that $h$ is
defined as a linear functional on the subspace of test functions $\phi$
satisfying $\int_{\R^d} \phi(x) L(x)dx = 0$ for all polynomials $L$ of
degree $\lfloor \Hu \rfloor$. Alternatively, at the cost of breaking
translation invariance, we may define $\FGF_s(\R^d)$ as a random element of
$C^{\lceil \Hu \rceil -1}(\R^d)$ by fixing the derivatives of $h$ at 0 up
to order $\lceil \Hu \rceil - 1$.  
The FGF covariance structure is described by the Hurst parameter $H$.  When
$\Hu$ is a positive non-integer, we have
$$\Cov[(h, \phi_1), (h, \phi_2)] = C(s,d)\int_{\R^d} \int_{\R^d}
|x-y|^{2\Hu} \phi_1(x) \phi_2(y) dxdy,$$ for some constant $C(s,d)$.  A
variant of this statement applies for negative and integer values of $\Hu$
(see Theorem \ref{KernelComp}).

Note that $\Hu$ is an affine function of $s$ and can be used instead of $s$
to parameterize the family of FGFs.  We use the parameter $s$ in part to
highlight the connection to the fractional Laplacian and white noise. With
our convention, white noise is $\FGF_0(\R^d)$ and the Gaussian free field
is $\FGF_1(\R^d)$.  However, in many of our formulas and theorems $\Hu$
will be the more natural parameter to use; thus, we fix the relationship
\eqref{eqn:hdef} and reference both $\Hu$ and $s$ throughout the
paper. We note that the fields $\{\FGF_s(\R^d)\,:\, s \in \R\}$ may be
  coupled with the same white noise so that \eqref{eqn:fgflaplaciandef}
  holds for all $s\in \R$ (Proposition~\ref{prop:FGF_coupling}).

\subsection{Examples}
The simplest example of a fractional Gaussian field is $\FGF_0(\R^d)$,
which is white noise. We denote by $\s(\R^d)$ the space of Schwartz
functions on $\R^d$, and we let $\s'(\R^d)$ be its dual, the space of
tempered distributions (see Section~\ref{sec:preliminaries} for
details). If $h\in \s'(\R^d)$ and $\phi\in \s(\R^d)$, we use the notation
$(h,\phi)$ for $h$ evaluated at $\phi$. White noise (surveyed in
\cite{kuo1996white}) is a random element of $\s'(\R^d)$ with the property
that for $\phi_1,\phi_2\in \s(\R^d)$, the random variables $(h,\phi_1)$ and
$(h,\phi_2)$ are centered Gaussians with covariance
\[
\Cov[(h,\phi_1),(h,\phi_2)] = \int_{\R^d}\phi_1(x)\phi_2(x)\,dx.
\]

Taking $d=1$ and $s=1$, we see that $(-\Delta)^{-s/2}$ is the
antiderivative operator. It follows that $\FGF_{1}(\R)$ is the
antiderivative of one dimensional white noise, which is a Brownian motion
interpreted as a real-valued function modulo constant. If we fix the
constant by setting the value at 0, we get ordinary Brownian motion.

If $s=1$ and $d\in \N$, then $\FGF_{1}(\R^d)$ is a $d$-dimensional
generalization of Brownian motion called the Gaussian free field
(GFF). As surveyed in \cite{sheffield2007gaussian}, the GFF is a random
tempered distribution on $\R^d$ (defined modulo additive constant if $d=2$)
with covariance given by
\[
\Cov[(h,\phi_1),(h,\phi_2)] =
\int_{\R^d}\int_{\R^d}\Phi(x-y)\phi_1(x)\phi_2(y)\,dx\,dy,
\]
where $\Phi$ is the fundamental solution of the Laplace equation in
$\R^d$. The two-dimensional GFF (which is the same as $\FGF_1(\R^2)$) has
been studied in a wide range of contexts in recent years. It can be
obtained as a scaling limit of random discrete models, such as domino
tilings \cite{kenyon2001dominos}, as well as continuum models, such as
those arising in random matrix theory \cite{rider2006noise}.  It is central
to conformal field theory and Liouville quantum gravity
\cite{sheffield2010conformal, duplantier2011liouville} and has many
connections to the Schramm Loewner evolution \cite{dubedat2009sle,
  schramm2010contour, millerimaginary1, millerimaginary2, millerimaginary3,
  millerimaginary4}. The 2D GFF is also known in the geostatistics
  literature as the \textit{de Wijs process} or the \textit{logarithmic
    variogram model}, where it was introduced in the early 1950s to
  describe ore deposits \cite{de1951statistics, de1953statistics,
    mondalapplying, chiles2009geostatistics}. More recently, variations in
  crop yields have been modeled using the GFF
  \cite{mccullagh2002statistical,mccullagh2006evidence}.

For all $d\in \N$, the $d$-dimensional GFF exhibits a certain Markov
property: For each fixed domain $D \subset \R^d$, if we are given the
restriction a GFF $h$ to $\R^d \setminus D$, then the conditional law of
$h$ restricted to $D$ is given by a conditionally deterministic function
(the harmonic extension of the field from $\partial D$ to
$D$)\footnote{Since the GFF is not defined pointwise, some care is needed
  to define the harmonic extension of the values of the GFF on $\R^d
  \setminus D$. Nevertheless, this can be made rigorous
  \cite{schramm2010contour}.}  plus an independent zero-boundary GFF
defined on $D$.

In Section \ref{sec:projection} we will establish an analogous property
that applies when $h$ is an $\FGF_s(\R^d)$ with $s \geq 0$.  Namely, if we
are given the restriction of $h$ to $\R^d \setminus D$, then the
conditional law of $h$ restricted to $D$ is given by a conditionally
deterministic function (the so-called {\bf $s$-harmonic extension} of the
field from $\R^d \setminus D$ to $D$) plus a random function (the so-called
{\bf zero-boundary-condition $\FGF_s$ on $D$}).  If $s \in \N$, then the
conditionally deterministic function depends on the restriction to
$\partial D$ of $h$ and its derivatives up to a certain order.  This
follows from the fact that $(-\Delta)^s$ is a local operator when $s \in
\N$.

As previously mentioned, another generalization of Brownian motion is the
fractional Brownian motion (FBM).  Fractional Brownian motion appears to
have been first introduced by Kolmogorov in 1940
\cite{kolmogorov1940wienersche}, and the term ``fractional Brownian
motion'' was introduced by Mandelbrot and Van Ness in 1968
\cite{mandelbrot1968fractional}.  As motivation, Mandelbrot and Van Ness
discuss various empirical studies of real world processes (the price of
wheat, water flowing through the Nile, etc.) that had been made by Hurst,
who found different scaling exponents in different settings\footnote{FBM is
  not the only model exhibiting the scaling behavior observed by Hurst. See
  \cite{bhattacharya1983hurst} for a model which uses drift rather than
  long-range dependence.}.

The definition of fractional Brownian motion can be extended to describe a
random function modulo additive constant on $\R^d$ when $d > 1$.  Given
$\Hu\in(0,1)$ we define the FBM (also called the fractional Brownian field)
on $\R^d$ as a mean-zero Gaussian process $(B^{\Hu}_t)_{t\in \R}$ with
covariance
\[
\Cov(B^\Hu_tB^\Hu_s) = \frac{1}{2}(|t|^{2\Hu}+|s|^{2\Hu}-|t-s|^{2\Hu}), 
\]
where $\Hu$ is the Hurst parameter of the field. We will prove in Section
\ref{sec:FBM} that the multidimensional fractional Brownian motion defined
this way is equivalent to $\FGF_s(\R^d)$, where $H=s-\frac{d}{2}\in(0,1)$. 

In the case $\Hu = 1/2$, this multidimensional process was introduced by
L\'evy in 1940 and is known as L\'evy Brownian motion
\cite{levy1940mouvement}.  General processes including multidimensional
fractional Brownian motion are discussed in Yaglom in 1957 and by Gangolli
in 1967 \cite{yaglom1957some, MR0215331}.  (Gangolli gives general analytic
arguments for positive definiteness of covariance kernels that apply in
this case.)  Fractional Brownian motion is studied in more
detail in works of Mandelbrot, as referenced in
\cite{mandelbrot1975geometry}.  More detailed and modern discussions of
fractional Brownian motion (including topics such as excursion set theory,
Hausdorff dimension, H\"older regularity, etc.) can be found in
\cite{adler2007random, adler2010geometry}.

The log-correlated Gaussian field (LGF) is a random element $h$ of the
space of tempered distributions modulo constants and has covariance given
by
\[
\Cov[(h,\phi_1),(h,\phi_2)] = -\int_{\R^d}\int_{\R^d}\log|x-y|\phi_1(x)\phi_2(y)\,dx\,dy,
\]
In two dimensions, the LGF coincides with the GFF (up to a constant
factor).
We will see in Section \ref{sec:whole-space} that the $d$-dimensional LGF
is a multiple of $\FGF_{d/2}(\R^d)$.  In recent years the log-correlated
Gaussian field has enjoyed renewed interest because of its relationship to
Gaussian multiplicative chaos. For a survey article of Gaussian
multiplicative chaos see \cite{rhodes2013gaussian}. Furthermore, the LGF in
$\R^3$ plays an important role in early universe cosmology, where it
approximately describes the gravitational potential function of the
universe at a fixed time shortly after the big bang; see
\cite{duplantier2013log} for more discussion and references.

Another noteworthy subclass of the fractional Gaussian fields is
$\FGF_2(\R^d)$, which is known as the bi-Laplacian Gaussian field. The
discrete counterpart of the bi-Laplacian Gaussian field is called the
membrane model in physics literature; for a mathematical point of view see
\cite{sakagawa2003entropic}, \cite{kurt2007entropic},
\cite{kurt2009maximum}, and \cite{sakagawa2012free}. In dimension at least
five, there is a natural discrete field associated with the uniform
spanning forest on $\Z^d$ whose scaling limit is $\FGF_2(\R^d)$
\cite{sun2013uniform}.

\subsection{Fractional Gaussian fields in one dimension}

The $\FGF_s(\R^d)$ processes are easiest to classify and explain when
$d=1$.  We first consider $\Hu = s-\frac{d}{2} \in (0,1)$ (so that $s \in
(1/2, 3/2)$), in which case the $\FGF_s(\R)$ is a Gaussian random function
$h: \R \to \R$ which we interpret as being defined modulo an additive constant.
This means that while the quantity $h(t)$ is not a well-defined random
variable for $t \in \R$, the quantity $h(t_1) - h(t_2)$ is a well-defined
random variable for $t_1,t_2 \in \R$. When $H\in (0,1)$, the $\FGF_s(\R)$
is the stationary-increment form of the {\bf fractional Brownian motion
  with Hurst parameter $\Hu$}. The law of the fractional Brownian motion is
determined by the variance formula $$\Var \bigl(h(t_1) - h(t_2)\bigr) =
|t_1 - t_2|^\Hu.$$
When $\Hu = 0$, so that $s=1/2$, the $\FGF_s(\R)$ is the log-correlated
Gaussian field (LGF), which is defined as a random tempered distribution modulo
additive constant.

When $d=1$ the weak derivative of an $\FGF_s(\R)$ is an $\FGF_{s-1}(\R)$.
Thus all $\FGF_s(\R)$ processes may be obtained by either integrating or
differentiating fractional Brownian motion (with $s \in (1/2, 3/2)$) or the
LGF ($s=1/2$) an integral number of times.  From this, it is clear that if
an $\FGF_s(\R)$, for $s \in (1/2,3/2]$, is defined modulo additive constant
in a translation invariant way, then the distributional derivatives
$\FGF_{s-1}(\R)$, $\FGF_{s-2}(\R)$, etc.\ are defined without an additive
constant.  Thus the $\FGF_s(\R)$ is defined as a random tempered
distribution without an additive constant when $s \leq 1/2$.  Similarly, if
the $\FGF_s(\R)$, for $s \in (1/2,3/2]$ is defined modulo additive constant
(in a translation invariant way), then the indefinite integrals
$\FGF_{s+1}(\R)$, $\FGF_{s+2}(\R)$, etc. are respectively defined modulo
linear polynomials, quadratic polynomials, etc.

The following proposition, rephrased and proved as Theorem~\ref{thm:
  restriction} in Section~\ref{sec:restriction}, is one reason that the
one-dimensional case is significant.

\begin{proposition}
  If $\Hu \geq 0$, then the restriction of the $d$-dimensional FGF with
  Hurst parameter $\Hu$ (i.e., with $s = \Hu + \frac{d}{2}$) to any fixed
  $k$-dimensional subspace (with $1 \leq k < d$) is a $k$-dimensional FGF
  with Hurst parameter $\Hu$ (up to multiplicative constant).
\end{proposition}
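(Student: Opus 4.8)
The plan is to check that the restricted field is a centered Gaussian field whose covariance is a positive constant multiple of the covariance of the $k$-dimensional $\FGF$ with the same Hurst parameter, and then to invoke the fact that a centered Gaussian field is determined by its covariance. Write $V\cong\R^k$ for the subspace and choose orthonormal coordinates on $\R^d$ so that $V=\R^k\times\{0\}^{d-k}$; then for $v_1,v_2\in V$ one has the elementary identity $|v_1-v_2|_{\R^d}=|v_1-v_2|_{\R^k}$. The essential point is that, by Theorem~\ref{KernelComp}, the covariance kernel of the $\FGF_s(\R^d)$ with Hurst parameter $\Hu$ — namely $|z|^{2\Hu}$ when $\Hu>0$ is non-integral, $-\log|z|$ when $\Hu=0$, and $|z|^{2\Hu}\log|z|$ (modulo lower-order polynomial terms, which are annihilated when integrated against any pair of admissible test functions) when $\Hu$ is a positive integer — depends, up to the multiplicative constant $C(s,d)$, only on $\Hu$ and not on the ambient dimension. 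Restricting this kernel to $V$ therefore produces, via the distance identity above, precisely a constant multiple of the kernel of the $k$-dimensional $\FGF$ with the same $\Hu$.

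Next I would make the notion of restriction precise and carry out the covariance computation. When $\Hu>0$ the field $h$ is almost surely an element of $C^{\lceil\Hu\rceil-1}(\R^d)$ modulo polynomials of degree $\lfloor\Hu\rfloor$, in particular a continuous function modulo polynomials, so $h|_V$ is literally well defined pointwise and is a random element of $C^{\lceil\Hu\rceil-1}(V)$ modulo the restrictions to $V$ of those polynomials. When $\Hu=0$ (the log-correlated case) $h$ is only a random distribution, and I would define $h|_V$ by mollification: for an admissible test function $\psi$ on $V$, set $(h|_V,\psi)\colonequals\lim_{\eps\to0}(h,\psi\otimes\eta_\eps)$, where $\psi\otimes\eta_\eps$ denotes the function on $\R^d=V\oplus V^\perp$ given by $\psi$ in the $V$-directions and an approximate identity $\eta_\eps$ in the $V^\perp$-directions; the $L^2$ convergence of this limit, and the formula $\Cov[(h|_V,\psi_1),(h|_V,\psi_2)]=C(s,d)\int_{\R^k}\int_{\R^k}K_{\Hu}(v_1-v_2)\psi_1(v_1)\psi_2(v_2)\,dv_1\,dv_2$, both follow from the covariance formula for $h$ together with the local integrability of $-\log|\cdot|$ on $\R^k$. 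In every case the covariance of $h|_V$ is $C(s,d)/C(s',k)$ times the covariance of the $k$-dimensional $\FGF$ with Hurst parameter $\Hu$ (with $s'=\Hu+\tfrac k2$); since the covariance of $h|_V$ is a genuine nondegenerate covariance, the ratio of constants is a positive real number, and the claimed equality in law follows.

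The remaining point — and the one requiring the most care — is the bookkeeping of the polynomial ambiguity, i.e., checking that the modular structures on the two sides match. An admissible test function $\psi$ on $V$ is one with $\int_V\psi(v)L(v)\,dv=0$ for every polynomial $L$ on $V$ of degree $\le\lfloor\Hu\rfloor$; in the mollified limit $\psi\otimes\eta_\eps$ pairs with any polynomial $\tilde L$ on $\R^d$ of degree $\le\lfloor\Hu\rfloor$ to give $\int_V\psi(v)\,\tilde L|_V(v)\,dv$, and $\tilde L|_V$ is again a polynomial on $V$ of degree $\le\lfloor\Hu\rfloor$, so $\psi\otimes\eta_\eps$ is (asymptotically) an admissible test function for $h$; conversely, in the chosen coordinates every polynomial on $V$ of degree $\le\lfloor\Hu\rfloor$ is the restriction of such a polynomial on $\R^d$ (one in $x_1,\dots,x_k$ only), so the polynomial indeterminacy of $h|_V$ is exactly that of the target $k$-dimensional $\FGF$, neither larger nor smaller. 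I expect the main obstacle to be precisely this $\Hu=0$ mollification step together with the modular bookkeeping; once those are in place the covariance comparison is immediate from Theorem~\ref{KernelComp} and the identity $|v_1-v_2|_{\R^d}=|v_1-v_2|_{\R^k}$.
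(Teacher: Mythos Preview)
Your proposal is correct and follows essentially the same approach as the paper: both arguments define the restriction via transverse mollification (the paper's $\eta_k*\phi^\uparrow$ is exactly your $\psi\otimes\eta_\eps$), use Theorem~\ref{KernelComp} to identify the limiting covariance, and rely on the key observation that the kernel depends on $|x-y|$ only through $\Hu$. The only organizational differences are that the paper reduces to codimension one and iterates, and handles all $s>\tfrac12$ uniformly by mollification rather than splitting off the $\Hu>0$ case via pointwise restriction; your shortcut in that case is fine but still implicitly leans on the same limiting argument to compute the covariance.
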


\subsection{Interpretation as a long range GFF}\label{sec:long-range-FGF}

The Gaussian free field $\FGF_1(\R^d)$ can be approximated by the discrete
Gaussian free field, which only has nearest neighbor interactions. This
discrete Markov property gives rise to the domain Markov property of the
Gaussian free field in the limit \cite{sheffield2007gaussian}. In Section
\ref{sec:dfgf}, we construct a discrete version of $\FGF_s$ for $s\in
(0,1)$ by introducing a discrete fractional gradient to play the role of
the discrete gradient in the definition of the discrete GFF. The fractional
gradient involves long range interactions, which may be viewed as the
reason that the Markov property fails for $\FGF_s$ when $s$ is not an
integer.

The comparison between the short range $\FGF_s(\R^d)$ (when $s\in \Z$) and
the long range $\FGF_s(\R^d)$ (when $s\notin \Z$) may also be seen from the
point of view of the corresponding potential theories. As an illustration,
consider GFF and $\FGF_s$ for $0<s<1$. The covariance kernel for the
Gaussian free field is given by the solution of the ordinary Laplace
equation $-\Delta f = \phi$. As we will see, the counterpart for $\FGF_s$
with $0<s<1$ is the fractional Laplacian equation $(-\Delta)^{s} f=\phi$.
The Laplacian is a local differential operator, while $(-\Delta)^{s}$ for
$s\in (0,1)$ is a non-local pseudo-differential operator and $(-\Delta)^s
f(x)$ depends on the values of $f(x)$ for all $x\in \R^d$. Another way to
see the distinction between the $s=1$ and $s\in(0,1)$ cases is to recall
that the Green's function for the Dirichlet Laplacian is given by the
density of the occupation measure of a Brownian motion (see
\cite{morters2010brownian}, for example), which is continuous. The
corresponding process when $s\in (0,1)$ is an isotropic $2s$-stable
L\'evy motion, which is a jump process.  

\section*{Acknowledgements}

The authors would like to thank David Jerison, Jason Miller, and Charles
Smart for helpful discussions.  Scott Sheffield thanks the astronomer John
M. Kovac for conversations about early universe cosmology and for the
corresponding references. Sheffield would also like to thank Bertrand
Duplantier, R\'emi Rhodes, and Vincent Vargas, his co-authors on a survey
of the log-correlated Gaussian field \cite{duplantier2013log} for a broad
range of useful insights.

\section{Preliminaries}\label{sec:preliminaries}

\makeatletter{}In this section we remind the reader of some definitions and facts
regarding tempered distributions and homogeneous Sobolev spaces. Some of
the following notation and ideas are from \cite{triebel1983theory}, to
which we refer the reader for more discussion on homogeneous spaces. We
will introduce and construct several linear spaces.  To aid the reader in
keeping track of the various definitions, we include a glossary of these
definitions in the \hyperref[notation]{appendix} on
page~\pageref{pg:notation}.

\subsection{Tempered Distributions and Sobolev spaces}
\label{subsec:tempdist}

Fix a positive integer $d$, and denote by $\s(\R^d)$ the real Schwartz
space, defined to be the set of real-valued functions on $\R^d$ whose
derivatives of all orders exist and decay faster than any \label{not:schw}
polynomial at infinity. A multi-index $\beta = (\beta_1,\ldots,\beta_d)$ is
an ordered $d$-tuple of nonnegative integers, and the order of $\beta$ is
defined to be $|\beta|\colonequals \sum_{j=1}^d \beta_j$. We equip
$\s(\R^d)$ with the topology generated by the family of seminorms
\[
\left\{\raisebox{2pt}{$\displaystyle\|f\|_{n,\beta} \colonequals \sup_{x\in
      \R^d} |x|^n |\partial^\beta f(x)| \,:\,n \geq 0, \: \beta\text{ is a
      multi-index}$} \right\}.
\]
The space $\s'(\R^d)$ of tempered distributions is defined to be
the space of continuous linear functionals on $\s(\R^d)$ \label{not:tdist}.

We take the convention that the Fourier transform $\mathcal F$ acting on a
Schwartz function $\phi$ on $\R^d$ is the function
\[
	\mathcal{F}[\phi](\xi) = \frac{1}{(2\pi)^{d/2}}\int_{\R^d} \phi(x)e^{-i\xi \cdot x} \,dx
\]
which we will often abbreviate as $\wh{\phi}(\xi)$.  The complex Schwartz
space (the space of functions whose real and imaginary parts are in
$\s(\R^d)$) is closed under the operation of taking the Fourier transform
\cite[Section~1.13]{tao-epsilon}, so the inverse Fourier transform
$\mathcal{F}^{-1}$ is well-defined on the complex Schwartz space and
satisfies the formula
\[
\mathcal{F}^{-1}[\phi](x) =
\frac{1}{(2\pi)^{d/2}}\int_{\R^d}\phi(\xi)e^{ix\cdot\xi}\,d\xi.
\]

We define the Fourier transform $\wh{f}$ of a tempered distribution $f$ by
setting $(\wh{f},\phi)\colonequals (f,\wh\phi)$, so that $\mathcal F$ and
$\mathcal F^{-1}$ may be interpreted as operators from $\s'(\R^d)$ to
$\s'(\R^d)$.  Regarding $\phi \in \s(\R^d)$ as a tempered distribution via
$\phi(\psi)\colonequals \int_{\R^d}\phi(x)\,\psi(x)\,dx$, we have the
continuous, dense inclusion $\s(\R^d) \subset \s'(\R^d)$. For the
fundamentals of the theory of distributions, we refer the reader to
\cite[~Appendix B]{lax} or \cite{tao-epsilon}.  For a more detailed
introduction to distribution theory we refer to \cite{friedlander} and
\cite{hormanderV1}.

For $r\in \R$, define $\s_r(\R^d) \subset \s(\R^d)$ to be the set of
Schwartz functions $\phi$ such that $(\partial^\alpha \wh{\phi})(0) = 0$
(or, equivalently, $\int_{\R^d} x^\alpha \phi(x)\,dx=0$) for all
multi-indices $\alpha$ such that $|\alpha|\leq r$ \label{not:schwM}. We
equip $\s_r(\R^d)$ with the subspace topology inherited from $\s(\R^d)$ and
denote by $\s_r'(\R^d)$ the topological dual \label{not:tdistM} of
$\s_r(\R^d)$. 
Observe that $\s_r'(\R^d)$ is canonically isomorphic to
$\s'(\R^d)/\mathcal{T}_r(\R^d)$, where $\mathcal{T}_r(\R^d)$ denotes the
space of polynomials of degree at most $r$ \label{not:poly} on
$\R^d$. Observe also that $\s_{r}(\R^d)=\s(\R^d)$ whenever $r$ is negative,
and that $\s_0(\R^d)=\{\phi \in \s(\R^d) \,:\, \int_{\R^d}\phi(x)\,dx
=0\}$.

Given $r\in \R$, we also consider the space
\[
\wt{\s}_r(\R^d) = \{\phi \in \s(\R^d)\,:\, (\partial^\alpha \phi)(0) = 0
\text{ for all }|\alpha| \leq r\},
\]
which is equal to the image of $\s_r(\R^d)$ under the inverse Fourier
transform operator\label{not:schwMFI}. We define the Fourier transform of an element of
$\s_r'(\R^d)$ as an element of $\wt{\s}_r'(\R^d)$ via $(\wh{f},\phi)
\colonequals (f,\wh{\phi})$ whenever $f\in \s_r'(\R^d)$ and $\phi \in
\wt{\s}_r(\R^d)$.

Define the space
\[
\mathring{H}^s(\R^d) \colonequals \left\{ f \in \s(\R^d)\,:\xi\mapsto |\xi|^s
	\wh{f}(\xi) \in L^2(\R^d)\right\} \label{not:uSob}
\]
and equip $\mathring{H}^s(\R^d)$ with the inner product
\[
(f,g)_{\dot H^s(\R^d)}\colonequals \left(\xi\mapsto |\xi|^s
  \wh{f}(\xi),\xi\mapsto |\xi|^s \overline{\wh{g}(\xi)}\right)_{L^2(\R^d)}.
\]
We define the Sobolev space $\dot H^s(\R^d)$ to be the Hilbert space
completion of $\mathring{H}^s(\R^d)$\label{not:Sob}, which we continuously embed in
$\s_{H}'(\R^d)$ as follows. If $\{f_n\}_{n\geq 1}$ is a Cauchy sequence in
$\mathring{H}^s(\R^d)$ and $\phi \in \s_{H}(\R^d)$, then by Plancherel and
Cauchy-Schwarz we have
\begin{align} \label{eq:cauchy} 
  \lefteqn{|(f_m - f_n,\phi)_{L^2(\R^d)}| \leq} \\ \nonumber &\left(\int
    |\wh{f}_m(\xi) - \wh{f}_n(\xi)|^2|\xi|^{2s}
    \,d\xi\right)^{1/2}\left(\int|\wh{\phi}(\xi)||\xi|^{-2s}\,d\xi\right)^{1/2}.
\end{align}
The first factor on the right-hand side tends to 0 as $\min(m,n) \to\infty$
and the second factor is finite since $\phi \in \s_{H}(\R^d)$. It follows
that $(f_m - f_n,\phi)_{L^2(\R^d)}$ is Cauchy in $\R$, which implies that
we can define a linear map $f:\s_{H}(\R^d) \to \R$ by $(f,\phi) \colonequals
\lim_{n\to\infty} (f_n,\phi)$ for all $\phi \in \s_{H}(\R^d)$. Observing that
$\phi_k \to 0$ in $\s_{H}(\R^d)$ implies
\begin{align*}
\limsup_{k\to\infty} |(f,\phi_k)|^2 &\leq \\
\limsup_{k\to\infty}&\limsup_{n\to\infty} \int|\wh{f}_n(\xi)|^2|\xi|^{2s}
  \,d\xi \times \int|\wh{\phi}_k(\xi)|^2|\xi|^{-2s}\,d\xi = 0,
\end{align*}
we conclude that $f$ is a continuous functional on
$\s_{H}(\R^d)$. Therefore, we may realize $\dot H^s(\R^d)$ as a subset of
$\s'_{H} (\R^d)$ by identifying each Cauchy sequence $\{f_n\}_{n\geq 1}$
with its $\dot H^s(\R^d)$-limit $f\in \s'_{H}(\R^d)$.

We can characterize $\dot H^s(\R^d)$ in another way which will be useful
for the following section. Note that if $(\phi_n)_{n \in \N}$ is an $\dot
H^s(\R^d)$-Cauchy sequence of Schwartz functions converging to $f$ in
$\s'_{H}(\R^d)$, then $(\wh{\phi}_n)_{n\in \N}$ is Cauchy in
$L^2(\R^d,|\xi|^{2s}\,d\xi)$, where $|\xi|^{2s}\,d\xi$ denotes the measure
whose density with respect to Lebesgue measure is $\xi\mapsto
|\xi|^{2s}$. Therefore, there exists $g\in L^2(\R^d,|\xi|^{2s}\,d\xi)$ to
which $\wh{\phi}_n$ converges with respect to the
$L^2(\R^d,|\xi|^{2s}\,d\xi)$ norm. Furthermore, it is straightforward to
verify using the Cauchy-Schwarz inequality that $g=\wh{f} \in
\wt{\s}_H'(\R^d)$. Therefore,
\[
\dot H^s(\R^d) =
\left\{
f \in \s_H'(\R^d) \,:\,
\wh{f} \in L^2(|\xi|^{2s}\,d\xi)
\right\}, \label{not:SobAlt}
\]
where $\wh{f} \in L^2(|\xi|^{2s}\,d\xi)$ means that there exists $g \in
L^2(|\xi|^{2s}\,d\xi)$ such that $(\wh{f},\phi) = \int_{\R^d} g(x) \phi(x)
\,dx$ for all $\phi\in \wt{\s}_H(\R^d)$.

\subsection{The Fractional Laplacian} \label{sec: fractional laplacian}
The fractional Laplacian generalizes the notion of a power $(-\La)^s$ of
the Laplacian from nonnegative integer values of $s$, for which it is
defined as a local operator by iterating the Laplacian, to all real values
of $s$. A standard reference for the fractional Laplacian is
\cite{landkof1972foundations}. Here we use ideas from Section 2 of
\cite{silvestre2007regularity}.
Let $k\in \{-1,0,1,2,\ldots\}$, and let $\phi \in \s_k(\R^d)$. If
$s>-\frac{1}{2}(d+k+1)$, then we set
\begin{equation} \label{eq:fraclapdef_smooth}
(-\La)^s\phi \colonequals \mathcal{F}^{-1}\left[ \xi\mapsto
  |\xi|^{2s}\wh{\phi}(\xi)\right],
\end{equation}
which is well-defined because $\xi\mapsto |\xi|^{2s}\wh{\phi}(\xi)$ is in
$L^1(\R^d)$. Note that \eqref{eq:fraclapdef_smooth} agrees with the local
definition of $-\Delta$ when $s=1$. Because of the singularity at the
origin in its Fourier transform, $(-\La)^s\phi$ is not necessarily
Schwartz. However, it is real-valued, smooth, and has polynomial decay at
infinity:
\begin{proposition} \label{prop:fraclapdecay} Let $k\in
  \{-1,0,1,2,\ldots\}$, $\phi \in \s_k(\R^d)$, and
  $s>-\frac{1}{2}(d+k+1)$. If $\alpha$ is a multi-index, then there exists
  a constant $C$ such that $\phi \in \s_k(\R^d)$ implies
  \begin{equation} \label{eq:polydecay} \sup_{x\in
      \R^d}(1+|x|^{d+2s+k+1})|\partial^\alpha(-\La)^s\phi(x)| \leq C
    \sup_{|\beta|\leq \max(k+1,|\alpha|)}\|\partial^\beta \phi\|_{L^\infty(\R^d)}.
  \end{equation}
  Furthermore, $(-\La)^s\phi$ is real-valued and smooth. 
\end{proposition}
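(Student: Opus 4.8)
The plan is to work entirely on the Fourier side, exploiting the fact that $\phi\in\s_k(\R^d)$ forces $\wh\phi$ to vanish to order $k+1$ at the origin. Together with the hypothesis $s>-\tfrac12(d+k+1)$, equivalently $d+2s+k+1>0$, this makes $\xi\mapsto|\xi|^{n+2s}\wh\phi(\xi)$ integrable on $\R^d$ for every $n\ge0$ (the vanishing of $\wh\phi$ controls the origin, the Schwartz decay controls the rest). Consequently \eqref{eq:fraclapdef_smooth} may be written as the absolutely convergent integral $(-\La)^s\phi(x)=(2\pi)^{-d/2}\int_{\R^d}|\xi|^{2s}\wh\phi(\xi)e^{ix\cdot\xi}\,d\xi$ and differentiated under the integral sign to all orders, giving
\[
\partial^\alpha(-\La)^s\phi(x)\;=\;\frac{1}{(2\pi)^{d/2}}\int_{\R^d}m(\xi)\,e^{ix\cdot\xi}\,d\xi,\qquad m(\xi)\colonequals(i\xi)^\alpha|\xi|^{2s}\wh\phi(\xi).
\]
In particular $(-\La)^s\phi\in C^\infty(\R^d)$ and $\|\partial^\alpha(-\La)^s\phi\|_{L^\infty(\R^d)}\le(2\pi)^{-d/2}\|m\|_{L^1(\R^d)}$, which is bounded by finitely many seminorms of $\phi$; and $(-\La)^s\phi$ is real because $\phi$ real gives $\overline{\wh\phi(\xi)}=\wh\phi(-\xi)$ and $|\xi|^{2s}$ is real and even, so $\overline{m(\xi)}=m(-\xi)$. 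For later use I also record that Taylor's theorem ($|\partial^\beta\wh\phi(\xi)|\lesssim|\xi|^{k+1-|\beta|}$ for $|\xi|\le1$, $|\beta|\le k+1$) combined with the Leibniz rule gives $|\partial^\beta m(\xi)|\lesssim|\xi|^{|\alpha|+2s+k+1-|\beta|}$ for $|\xi|\le1$, with $m$ still rapidly decaying at infinity.

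It remains to prove the decay estimate \eqref{eq:polydecay}. Since $d+2s+k+1>0$, the weight $1+|x|^{d+2s+k+1}$ is bounded on $\{|x|\le1\}$, so in view of the sup bound above it suffices to show $|\partial^\alpha(-\La)^s\phi(x)|\lesssim|x|^{-(d+2s+k+1)}$ for $|x|\ge1$, with constant controlled by finitely many seminorms of $\phi$ (the argument in fact produces the stronger exponent $d+|\alpha|+2s+k+1$). Fix $\eta\in C_c^\infty(\R^d)$ with $\eta\equiv1$ near $0$ and $\supp\eta\subset\{|\xi|\le1\}$, and split $m=\eta m+(1-\eta)m$. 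The piece $(1-\eta)m$ is a genuine Schwartz function — the singularity of $|\xi|^{2s}$ at the origin has been excised and $\wh\phi\in\s(\R^d)$ — so $\mathcal F^{-1}[(1-\eta)m]\in\s(\R^d)$ decays faster than any power of $|x|$, with seminorms bounded by seminorms of $\phi$. For the compactly supported piece $\eta m$, which is smooth away from $0$ and comparable to $|\xi|^{|\alpha|+2s+k+1}$ near $0$, I would use a dyadic decomposition $\eta m=\sum_{j\ge j_0}m_j$, $m_j$ supported in $\{|\xi|\sim2^{-j}\}$ with $j_0$ a fixed integer. On each annulus $m_j$ is smooth with compact support away from the origin, so integrating by parts $N$ times via $e^{ix\cdot\xi}=|x|^{-2N}(-\Delta_\xi)^N e^{ix\cdot\xi}$ produces no boundary terms and gives, writing $\theta\colonequals d+|\alpha|+2s+k+1>0$,
\[
|\mathcal F^{-1}[m_j](x)|\;\lesssim\;\min\bigl\{\,2^{-j\theta},\;|x|^{-2N}2^{j(2N-\theta)}\,\bigr\}\times(\text{seminorm of }\phi),
\]
using $\|m_j\|_{L^1}\lesssim2^{-j\theta}$ and $\|(-\Delta_\xi)^N m_j\|_{L^1}\lesssim2^{j(2N-\theta)}$ (each derivative on the cutoff defining $m_j$ costs $2^j$, each derivative on $m$ also costs $\sim2^j$ on $\{|\xi|\sim2^{-j}\}$). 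Choosing $N$ with $2N>\theta$, the two competing bounds balance at the scale $2^j\sim|x|$; summing the (geometric) series over $j\ge j_0$ — using the second bound for $2^j\le|x|$ and the first for $2^j\ge|x|$, each range dominated by its value $|x|^{-\theta}$ at the crossover — yields $|\mathcal F^{-1}[\eta m](x)|\lesssim|x|^{-\theta}\times(\text{seminorm of }\phi)$ for $|x|\ge1$, as needed.

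The one genuine obstacle is the non-smoothness of $m$ at the origin: since $|\xi|^{2s}\wh\phi(\xi)$ is merely Hölder-type at $\xi=0$, one cannot directly invoke the Paley--Wiener-style decay of inverse Fourier transforms of Schwartz functions, and a single fixed-order integration by parts over the whole ball would not reach the sharp exponent $d+2s+k+1$, which need not be an even integer. The dyadic decomposition resolves both points at once — each $m_j$ sits on an annulus where it is smooth, and the geometric summation recovers the exact power. What remains is bookkeeping: tracking via $\partial^\beta\wh\phi=\mathcal F[(-ix)^\beta\phi]$ and the Taylor expansion of $\wh\phi$ at $0$ that all implicit constants depend only on a fixed finite list of seminorms of $\phi$, so as to match the explicit right-hand side of \eqref{eq:polydecay}; this part is routine.
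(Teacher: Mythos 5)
Your core argument is correct and is essentially a Littlewood--Paley rendering of the mechanism the paper uses. The paper splits the frequency integral with an $x$-dependent partition of unity subordinate to $\{\R^d\setminus B(0,1/|x|),\,B(0,2/|x|)\}$, treats the outer piece by integration by parts in the radial variable, and treats the inner piece by the Taylor bound $|\wh{\phi}(\xi)|\lesssim |\xi|^{k+1}$ (written there by factoring out $\wh{\phi}(\xi)/|\xi|^{k+1}$). You instead fix one cutoff at the origin, decompose the low-frequency part dyadically, integrate by parts $2N$ times on each annulus, and let the geometric summation locate the crossover at $2^{-j}\sim 1/|x|$ --- which is exactly the paper's splitting radius, recovered automatically. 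Your version is uniform in $\alpha$, needs no $x$-dependent cutoff or boundary-term bookkeeping, and yields the stronger exponent $d+|\alpha|+2s+k+1$; the paper's two-region split is shorter if one only wants the stated exponent. Smoothness and real-valuedness are handled the same way in both (differentiation under the integral sign and conjugate symmetry of the multiplier), and your derivative bounds $|\partial^\beta m(\xi)|\lesssim |\xi|^{|\alpha|+2s+k+1-|\beta|}$ near the origin are justified as you indicate.

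One caveat concerns your closing sentence: reducing your constants to the literal right-hand side of \eqref{eq:polydecay} is not routine bookkeeping --- it cannot be done at all, by your argument or the paper's. Your estimates (and the paper's, once its Taylor step is applied to $\wh{\phi}$ rather than to $\phi$) involve quantities such as $\|\partial^\beta\wh{\phi}\|_{L^\infty}\le C\,\|x^\beta\phi\|_{L^1}$, i.e.\ weighted Schwartz seminorms, and no bound by the unweighted norms $\sup_{|\beta|\le\max(k+1,|\alpha|)}\|\partial^\beta\phi\|_{L^\infty}$ alone can hold: for $\phi\in\s_k(\R^d)$ nonzero and $\phi_\lambda(x)\colonequals\phi(x/\lambda)$ (which remains in $\s_k(\R^d)$), the right-hand side of \eqref{eq:polydecay} stays bounded as $\lambda\to\infty$, while for $\alpha=0$ the left-hand side grows like $\lambda^{d+k+1}$ times $\sup_y|y|^{d+2s+k+1}|(-\La)^s\phi(y)|>0$. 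So state your conclusion as a bound by finitely many Schwartz seminorms of $\phi$ --- which is what your dyadic argument actually produces and is all that the proposition is used for, namely continuity of $(-\La)^s:\s_k(\R^d)\to\mathcal{U}_{s+(k+1)/2}(\R^d)$ --- rather than promise the displayed right-hand side.
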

\begin{proof}
  The proof of smoothness is routine: we write the inverse Fourier
  transform using its definition and differentiate under the integral
  sign. To show that $(-\La)^s\phi$ is real-valued, we note that a function
  $\psi\in L^1(\R^d)$ is the Fourier transform of a real-valued function in
  $L^1(\R^d)$ if and only if $\psi(-\xi)$ and $\psi(\xi)$ are complex
  conjugates for all $\xi \in \R^d$. The function $\xi\mapsto
  |\xi|^{-2s}\wh{\phi}(\xi)$ has this property whenever $\wh{\phi}$ does,
  so we may conclude that $(-\La)^s\phi$ is real-valued.

  To prove \eqref{eq:polydecay}, let $\{f_1,f_2\}$ be a partition of unity
  subordinate to the open cover $\left\{\R^d\setminus
  B\left(0,\frac{1}{|x|}\right),B\left(0,\frac{2}{|x|}\right)\right\}$ of $\R^d$,
  and define $\phi_i(\xi)= f_i(\xi) \wh{\phi}(\xi)/|\xi|^{k+1}$ for $i \in
  \{1,2\}$. We calculate
  \begin{align*}
    \partial^{\alpha}(-\La)^s \phi(x) &= C \int_{\R^d} e^{i x\cdot \xi}
    \xi^\alpha|\xi|^{2s+k+1}
    \wh{\phi}(\xi)/|\xi|^{k+1}\,d\xi \\
    &= C \int_{\R^d\setminus B(0,1/|x|)} e^{i x\cdot \xi}
    \xi^\alpha|\xi|^{2s+k+1}
    \phi_1(\xi)\,d\xi + \\
    &\hspace{2cm}C\int_{B(0,2/|x|)} e^{i x\cdot \xi}
    \xi^\alpha|\xi|^{2s+k+1} \phi_2(\xi)\,d\xi,
  \end{align*}
  where $C$ is some constant. To obtain the desired bound for the first
  integral, we write the integral in spherical form and apply
  integration-by-parts with respect to the radial coordinate. For the
  second integral, we bound $\phi_2(x)$ by a constant times
  $\sup_{|\beta|=k+1}|\partial^\beta\phi(0)||\xi|^{-k-1}$ near the origin, 
  using Taylor's theorem.
\end{proof}

For $s>-d/2$, we define\footnote{These spaces are denoted
  $\overline{\s}_s(\R^d)$ in \cite{silvestre2007regularity}.} the space
$\mathcal{U}_s(\R^d)$ to be the space of all functions $\phi\in
C^\infty(\R^d)$ such that
\[
x\mapsto
(1+|x|^{d+2s})(\partial^\alpha f)(x) \label{not:yU?}
\]
is bounded for all multi-indices $\alpha$. These spaces interpolate between
$C^\infty(\R^d)$ and $\s(\R^d)$, as the derivatives of their elements decay
polynomially at a rate indexed by $s$. In particular, $\s(\R^d) \subset
\mathcal{U}_s(\R^d) \subset \mathcal{U}_{s'}(\R^d)$ whenever $s>s'$. We
equip $\mathcal{U}_s(\R^d)$ with the topology induced by the family of
seminorms $f\mapsto \sup_{x\in \R^d} |(1+|x|^{d+2s})(\partial^\alpha
f)(x)|$. By Proposition~\ref{prop:fraclapdecay}, $(-\Delta)^s$ is a
continuous map from $\s_k(\R^d)$ to
$\mathcal{U}_{s+(k+1)/2}(\R^d)$ \label{not:thatsyU}. Furthermore, $(-\La)^s\phi=0$ for $\phi
\in \s_k(\R^d)$ implies that $\wh{\phi}$ vanishes except possibly at the
origin. This implies that $\phi$ is a polynomial, which in turn implies
that $\phi=0$. Therefore, $(-\Delta)^s$ is injective. For all $f$ in the
topological dual of the image $(-\Delta)^s \s_k(\R^d)\subset
\mathcal{U}_{s+(k+1)/2}$, we define $(-\Delta)^sf\in \s_k'(\R^d)$ by
\[
((-\Delta)^sf,\phi) = (f,(-\Delta)^s\phi),
\]
It is straightforward to verify that this definition agrees with
\eqref{eq:fraclapdef_smooth} when $f\in \s(\R^d)$. Observe that
$(-\La)^{s_1}(-\La)^{s_2} = (-\La)^{s_1+s_2}$ for all $s_1,s_2\in \R$.  We
will consider two important examples of elements of $((-\Delta)^s
\s_k(\R^d))'$:

(i) Elements of homogeneous Sobolev spaces. Let $s\in \R$ and $H=s-d/2$. It
is straightforward to verify that $f\in \dot{H}^s(\R^d)$ determines an
element of $((-\Delta)^s \s_H(\R^d))'$ via $(f,\phi)\colonequals (\wh{f},
\wh{\phi})$. Furthermore, the definition of $(-\Delta)^sf$ arising from
this correspondence satisfies $\wh{(-\La)^sf}(\xi)=|\xi|^{2s}\wh{f}(\xi)$.
It follows that $(-\La)^s$ is an isometric isomorphism from
$\dH^{s_0}(\R^d)$ to $\dH^{s_0-2s}(\R^d)$.

(ii) Measurable functions $f:\R^d \to \mathbb{C}$ satisfying
\begin{equation} \label{eq:finite_integral}
\int_{\R^d}|f(x)|(1+|x|^{d+2s+k+1})^{-1}\,dx < \infty.
\end{equation}
Interpreting $f$ as a linear functional on $(-\Delta)^s \s_k(\R^d)$ by
integration against a test function, the continuity of $f$ with respect the
$\mathcal{U}_{s+(k+1)/2}(\R^d)$ topology follows from
Proposition~\ref{prop:fraclapdecay}.

The following proposition gives an alternative representation of the
fractional Laplacian in the case $0<s<1$.
\begin{proposition} \label{prop:diff_quo} For all $f\in \s(\R^d)$, $x\in
  \R^d$, and $s\in(0,1)$, we have
  \begin{align*}
    (-\La)^s f(x) = -\frac{1}{2}C(d,s)\int_{\R^d} \frac{f(x+y)-2f(x)
      +f(x-y) }{|y|^{d+2s}}\,dy,
  \end{align*}
  where $1/C(d,s) = \int_{\R^d} (1-\cos x_1) |x|^{-d-2s} \,dx$.
\end{proposition}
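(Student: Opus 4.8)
The plan is to verify the identity on the Fourier side, using the definition $(-\Delta)^s f = \mathcal{F}^{-1}[\xi \mapsto |\xi|^{2s}\wh f(\xi)]$ from \eqref{eq:fraclapdef_smooth}. Set
\[
  I(x) \colonequals -\frac{1}{2}\int_{\R^d} \frac{f(x+y) - 2f(x) + f(x-y)}{|y|^{d+2s}}\,dy.
\]
First I would check that the integral defining $I(x)$ converges absolutely for $f \in \s(\R^d)$: near $y=0$ the second difference $f(x+y) - 2f(x) + f(x-y)$ is $O(|y|^2)$ by Taylor's theorem, so the integrand is $O(|y|^{2-d-2s})$, which is integrable near the origin since $s<1$; away from the origin the numerator is bounded and $|y|^{-d-2s}$ is integrable at infinity since $s>0$. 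Thus $I$ is well defined, and one checks it is also continuous and has enough decay to be a tempered distribution, so it suffices to show $\wh I(\xi) = |\xi|^{2s}\wh f(\xi)$.

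Next I would compute $\wh I$. Using the symmetrization $f(x+y) + f(x-y) - 2f(x)$, write $I(x) = -\tfrac12 \int_{\R^d} \frac{\tau_{-y}f(x) + \tau_y f(x) - 2f(x)}{|y|^{d+2s}}\,dy$ where $\tau_y f(x) = f(x+y)$. Taking Fourier transforms and using $\wh{\tau_y f}(\xi) = e^{i y\cdot \xi}\wh f(\xi)$ together with Fubini (justified by the absolute convergence just established, tensored against a Schwartz test function in $\xi$), one gets
\[
  \wh I(\xi) = \wh f(\xi)\cdot \left( -\frac12 \int_{\R^d} \frac{e^{iy\cdot\xi} + e^{-iy\cdot\xi} - 2}{|y|^{d+2s}}\,dy \right) = \wh f(\xi)\int_{\R^d} \frac{1 - \cos(y\cdot\xi)}{|y|^{d+2s}}\,dy.
\]
It then remains to evaluate $J(\xi) \colonequals \int_{\R^d} \frac{1-\cos(y\cdot\xi)}{|y|^{d+2s}}\,dy$ and show $J(\xi) = C(d,s)^{-1}|\xi|^{2s}$ with $C(d,s)^{-1} = \int_{\R^d}(1-\cos x_1)|x|^{-d-2s}\,dx$. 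For this, note $J$ is rotationally invariant (the integral is unchanged under rotating $y$, hence depends only on $|\xi|$), and homogeneous of degree $2s$: substituting $y = z/|\xi|$ gives $J(\xi) = |\xi|^{2s}\int_{\R^d}\frac{1-\cos(z\cdot e)}{|z|^{d+2s}}\,dz$ where $e = \xi/|\xi|$ is a unit vector, and by rotational invariance we may take $e = e_1$, giving exactly $|\xi|^{2s}\int_{\R^d}(1-\cos x_1)|x|^{-d-2s}\,dx = |\xi|^{2s}/C(d,s)$.

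The main obstacle is the bookkeeping around convergence and the interchange of integrals: the integral $\int \frac{1-\cos x_1}{|x|^{d+2s}}dx$ converges only conditionally in a naive sense ($O(|x_1|^2/|x|^{d+2s})$ near $0$ needs $s<1$, and the $-\cos$ oscillation is what controls the tail for $s>0$; the constant term $1$ alone is not integrable at infinity), so the Fubini step must be organized carefully — e.g.\ by first pairing $\wh I$ against a test function $\psi \in \s(\R^d)$, using the pointwise absolute convergence of the $y$-integral for fixed $x$ (where the $O(|y|^2)$ cancellation is genuine), and only then swapping the $x$- and $y$-integrations, keeping the symmetrized combination $e^{iy\cdot\xi} + e^{-iy\cdot\xi} - 2$ together throughout so that the near-origin cancellation is never broken. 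Alternatively one can introduce a cutoff $|y|>\eps$, compute everything, and pass to the limit $\eps \to 0$ using dominated convergence with the bounds above. Once the interchange is justified, the remaining computation is the elementary scaling argument for $J(\xi)$.
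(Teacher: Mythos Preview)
Your argument is correct and is precisely the standard Fourier-side computation; the paper's own proof simply cites \cite{dihitchhiker}, whose Lemma~3.2 and Proposition~3.3 carry out exactly the steps you outline (absolute convergence of the symmetrized integral, Fourier transform via the shift rule, and the scaling/rotation evaluation of $J(\xi)$). Two small remarks: first, the sufficiency condition should read $\wh I(\xi)=C(d,s)^{-1}|\xi|^{2s}\wh f(\xi)$ rather than $\wh I(\xi)=|\xi|^{2s}\wh f(\xi)$, which is indeed what your computation of $J(\xi)$ yields; second, your worry about conditional convergence is unnecessary, since $\int_{\R^d}\frac{1-\cos x_1}{|x|^{d+2s}}\,dx$ converges absolutely (the numerator is $O(|x|^2)$ near the origin and bounded at infinity), so the Fubini step goes through directly with the uniform-in-$x$ bound on the inner integral that you already noted.
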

\begin{proof}
  Combine Lemma 3.2 and Proposition 3.3 in \cite{dihitchhiker}.
\end{proof}

When $s\in \{0,1,2,\ldots\}$, the fractional Laplacian $(-\La)^s$ coincides
with the poly-Laplacian, a fundamental example of a higher order elliptic
operator, obtained by iterating the Laplacian operator. For $s\in (0,1)$,
$(-\La)^s$ is a classical example of a non-local pseudo-differential
operator. These two classes generate all the operators of the form
$(-\Delta)^s$ for $s \geq 0$ in the sense that $(-\Delta)^s$ can be written
as a composition of $(-\Delta)^{s-\lfloor s\rfloor}$ and
$(-\Delta)^{\lfloor s\rfloor}$.

For the properties of the poly-Laplacian, we refer the reader to
\cite{gazzola2010polyharmonic} and references therein.  For more properties
of $(-\La)^s$ where $s\in (0,1)$, see \cite{silvestre2007regularity} and
reference therein.

\subsection{White Noise}\label{subsec:white_noise}

On a finite dimensional Hilbert space $\mathcal{H}$ with inner product
$(\cdot,\cdot)_{\mathcal{H}}$, one characterization of standard Gaussian
$h$ on $\mathcal{H}$ is that $h$ is a standard Gaussian in $\mathcal{H}$ if
and only if for all $v\in \mathcal{H}$, $(h,v)_{\mathcal{H}}$ is a centered
Gaussian variable with variance $(v,v)_\mathcal{H}$. If $\mathcal{H}$ is
infinite dimensional, then it is not possible to define a random element of
$\mathcal{H}$ that satisfies this condition
\cite{janson1997gaussian,sheffield2007gaussian}. Nevertheless, we can still
say that a random functional (which we will denote by
$(h,\cdot)_{\mathcal{H}}$) is a standard Gaussian on $\mathcal{H}$ if for
all $v\in \mathcal{H}$, $(h,v)_{\mathcal{H}}$ is a centered Gaussian
variable with variance $(v,v)_{\mathcal{H}}$. Note that such a functional
cannot be almost surely continuous with respect to $\|\cdot
\|_{\mathcal{H}}$.

White noise on $\R^d$ can be regarded as a standard Gaussian on
$L^2(\R^d)$. We will define $W$ to be a random generalized function such
that $(W,f)$ is a centered Gaussian with variance $\|f\|^2_{L^2(\R^d)}$ for
all $f\in \s(\R^d)$.  However, it is not obvious that there exists a
measure on $\s'(\R^d)$ satisfying these conditions. Since we will
rigorously construct the FGF in Section \ref{Rd} in the same manner, we
will review a construction of white noise following
\cite{simon1979functional}.

    We say that a complex-valued function $\Phi$ on $\s(\R^d)$ is the
    characteristic function of a probability measure $\nu$ on $\s'(\R^d)$ if
    \begin{equation} \label{eq:char}
    \Phi(\phi) = \int_{\s'(\R^d)}e^{i(x,\phi)}\,d\nu(x),\quad \text{for all }\phi\in
    \s(\R^d).
    \end{equation}
      \begin{theorem}[Bochner-Minlos theorem for $\s'(\R^d)$]\label{minlos}
    A complex-valued function $\Phi$ on $\s(\R^d)$ is the characteristic
    function of a probability measure $\nu$ on $\s'(\R^d)$ if and only if
    $\Phi(0)=1$, $\Phi$ is continuous, and $\Phi$ is positive
    definite, that is,
    \[
    \sum_{j,k=1}^nz_j\overline{z_k}\Phi(\phi_j-\phi_k)\geq 0,
    \]
    for all $\phi_1,\dots,\phi_n\in \s(\R^d)$, and $z_1,\dots,z_n\in
    \mathbb{C}$. Furthermore, $\Phi$ determines $\nu$ uniquely.
  \end{theorem}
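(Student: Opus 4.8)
The plan is to dispatch necessity by a direct computation and then establish sufficiency in three steps — a finite-dimensional Bochner argument, a Kolmogorov extension, and a concentration argument placing the resulting measure on $\s'(\R^d)$ — before addressing uniqueness.

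\textbf{Necessity.} If $\nu$ is a probability measure on $\s'(\R^d)$ and $\Phi$ is given by \eqref{eq:char}, then $\Phi(0) = \nu(\s'(\R^d)) = 1$; for $\phi_1, \dots, \phi_n \in \s(\R^d)$ and $z_1, \dots, z_n \in \bbC$,
\[
\sum_{j,k=1}^n z_j \ol{z_k}\, \Phi(\phi_j - \phi_k) = \int_{\s'(\R^d)} \Bigl| \sum_{j=1}^n z_j e^{i(x,\phi_j)} \Bigr|^2 d\nu(x) \geq 0;
\]
and if $\phi_n \to \phi$ in $\s(\R^d)$ then $(x,\phi_n) \to (x,\phi)$ for each fixed $x \in \s'(\R^d)$, so dominated convergence (using $|e^{i(x,\phi_n)}| \leq 1$) gives $\Phi(\phi_n) \to \Phi(\phi)$.

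\textbf{Sufficiency, finite-dimensional reduction.} For linearly independent $\phi_1, \dots, \phi_n \in \s(\R^d)$, the function $(t_1, \dots, t_n) \mapsto \Phi(\sum_j t_j \phi_j)$ on $\R^n$ is continuous, positive definite, and equals $1$ at the origin, so the classical Bochner theorem produces a Borel probability measure $\mu_{\phi_1, \dots, \phi_n}$ on $\R^n$ of which it is the Fourier transform. These measures are consistent under the linear projections induced by relations among the $\phi_j$, so the Kolmogorov extension theorem yields a probability measure $\nu_0$ on the algebraic dual of $\s(\R^d)$, equipped with the $\sigma$-algebra generated by the evaluations $x \mapsto (x,\phi)$, satisfying $\int e^{i(x,\phi)}\, d\nu_0(x) = \Phi(\phi)$ for all $\phi \in \s(\R^d)$.

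\textbf{Sufficiency, concentration on $\s'(\R^d)$.} This is the heart of the matter and is where the nuclear structure of $\s(\R^d)$ enters. Write $\s(\R^d) = \bigcap_p \mathcal H_p$ as a projective limit of the Hilbert spaces $\mathcal H_p$ defined by powers of the harmonic oscillator $-\La + |x|^2 + 1$, so that the inclusions $\mathcal H_{p+q} \hookrightarrow \mathcal H_p$ are Hilbert--Schmidt once $q$ is large, and $\s'(\R^d) = \bigcup_p \mathcal H_{-p}$. Continuity of $\Phi$ at $0$ yields $p$ and $\delta > 0$ with $\Real \Phi(\phi) \geq 1 - \eps$ whenever $\|\phi\|_{\mathcal H_p} \leq \delta$. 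Integrating $1 - \Real \Phi$ against centered Gaussians on finite-dimensional subspaces of $\mathcal H_{p+q}$ and estimating via the Hilbert--Schmidt norm of $\mathcal H_{p+q} \hookrightarrow \mathcal H_p$ produces a tail bound of the form $\nu_0(\{ x : \|x\|_{\mathcal H_{-(p+q)}} > R \}) \leq \omega(R)$ with $\omega(R) \to 0$ as $R \to \infty$. Since balls of $\mathcal H_{-(p+q)}$ are compact in $\mathcal H_{-(p+q')} \subset \s'(\R^d)$ for a further Hilbert--Schmidt step, this is exactly the tightness needed to conclude, via Prokhorov's theorem, that $\nu_0$ is carried by a Radon probability measure $\nu$ on $\s'(\R^d)$, whose characteristic function is $\Phi$ by construction.

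\textbf{Uniqueness and the main obstacle.} Any probability measure on $\s'(\R^d)$ with characteristic function $\Phi$ has prescribed pushforward under each map $x \mapsto ((x,\phi_1), \dots, (x,\phi_n))$, by uniqueness of finite-dimensional Fourier transforms; since these cylinder sets generate the Borel $\sigma$-algebra of $\s'(\R^d)$, the measure is determined. The one genuinely delicate point is the concentration step: without the Hilbert--Schmidt inclusions there is no reason the Kolmogorov-extended measure should not leak into the purely algebraic dual, and taming this mass at infinity is precisely the content distinguishing Minlos's theorem from Bochner's. A complete treatment of this argument, in the form we use, appears in \cite{simon1979functional}.
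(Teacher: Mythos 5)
Your proposal is correct and follows essentially the same route as the paper: finite-dimensional Bochner plus Kolmogorov extension, with the concentration of the extended measure on $\s'(\R^d)$ handled via the nuclear (Hermite/harmonic-oscillator Hilbert-scale) structure and deferred in its technical details to \cite{simon1979functional}, exactly as the paper's sketch does via the sequence-space realization $s$ and $s'$. The only differences are cosmetic: you phrase the nuclearity step abstractly (Hilbert--Schmidt embeddings, tightness) where the paper works with the explicit Hermite coordinates, and you additionally spell out the easy necessity and uniqueness directions.
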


  \begin{proof}
    We briefly sketch the proof given in
    \cite[Theorem~2.3]{simon1979functional} for the case $d=1$; the case
    $d>1$ may be proved similarly. We introduce coordinates to the space
    $\s(\R)$ by writing each function $\phi \in \s(\R)$ as $\phi =
    \sum_{n=1}^\infty (\phi,\phi_n)_{L^2(\R)} \phi_n$, where
    $\{\phi_n\}_{n=0}^\infty$ is the Hermite basis of $L^2(\R)$ defined by
    \[
    \phi_n(x) = \frac{(-1)^n e^{\frac{x^2}{2}} \frac{d^n}{d
   x^n}[e^{-x^2}]}{ \pi^{1/4} \sqrt{2^n n!}}.
    \]
    Identifying $\phi \in \s(\R)$ with
    $\{(\phi,\phi_n)_{L^2(\R)}\}_{n=0}^\infty$ and using the fact that
    $\phi_n$ is an eigenfunction of $-\frac{d^2}{dx^2} + x^2$, we find that
    $\s(\R)$ is isomorphic to the sequence space
    \[
    s = \bigcap_{m\in \Z} \left\{x\in \R^{\N_0} \,: \, \sum_{n}(1+n^2)^m
      |x_n| \equalscolon \|x\|_m < \infty\right\},
    \]
    and the topology of $\s(\R)$ is equivalent to the one induced by the
    family of seminorms $\|\cdot \|_m$. Furthermore, $\s'(R^d)$ is
    isomorphic to
    \[
    s' = \bigcup_{m\in \Z} \left\{x\in \R^{\N_0} \,: \|x\|_m <
      \infty\right\}
    \] if we interpret a sequence $x$ as a linear functional $L_x$ via
    $L_x(y) = \sum_{n=0}^\infty x_n y_n$.

    Bochner's theorem states that characteristic functions of $\R^n$-valued
    random variables are in one-to-one correspondence with normalized,
    continuous, positive definite functions on $\R^n$. Using Bochner's
    theorem, we conclude for all $n \in \N_0$, there is a measure $\mu_n$
    on $\text{span}(\phi_1,\ldots,\phi_n)$ such that
    \[
    \Phi(\phi) = \int e^{i(x,\phi)}\,d\mu_n(x) \quad \text{ for all }\phi \in
    \text{span}(\phi_1,\ldots,\phi_n).
    \]
    By the uniqueness part of Bochner's theorem, these measure are
    consistent. By the Kolmogorov extension theorem, there exists a measure
    $\mu$ on $\R^{\N_0}$ such that \eqref{eq:char} holds for all $\phi$ in
    the linear span of $\{\phi_n\}_{n=0}^\infty$ (that is, when at most
    finitely many of $\phi$'s coordinates are nonzero). It may be shown
    using the continuity of $\Phi$ that $\mu(s')=1$ (see
    \cite{simon1979functional} for details), which allows us to restrict
    $\mu$ to obtain a probability measure on $s'$ and conclude that
    \eqref{eq:char} holds for all $\phi \in \s(\R)$.
  \end{proof}

  We will use Theorem~\ref{minlos} in conjunction with the following
  proposition, which gives sufficient conditions for a functional to be
  positive definite.

  \begin{proposition} \label{prop:posdef}
    Let $(S,(\cdot,\cdot))$ be an inner product space. Then the functional
    $\Phi:S\to \R$ defined by $\Phi(v)\colonequals
    \exp\left(-\frac{1}{2}(v,v)\right)$ is positive definite.
  \end{proposition}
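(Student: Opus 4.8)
The plan is to reduce the statement to the classical fact that a Gaussian kernel on $\R^n$ is positive definite, together with the observation that positive definiteness is preserved under finite-dimensional restrictions. Given vectors $v_1,\dots,v_n\in S$ and scalars $z_1,\dots,z_n\in\bbC$, I want to show $\sum_{j,k=1}^n z_j\overline{z_k}\,\Phi(v_j-v_k)\geq 0$, i.e.\ that the matrix $M_{jk}=\exp\bigl(-\tfrac12(v_j-v_k,v_j-v_k)\bigr)$ is positive semidefinite.

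First I would restrict attention to the finite-dimensional subspace $V=\mathrm{span}(v_1,\dots,v_n)\subset S$. Since the inner product on $S$ restricts to an inner product on $V$, there is a linear isometric embedding of $V$ into some Euclidean space $\R^m$; under this embedding the $v_j$ become vectors $w_j\in\R^m$ with $(v_j-v_k,v_j-v_k)=|w_j-w_k|^2$. Thus it suffices to prove the claim when $S=\R^m$ with the standard inner product, i.e.\ to show that $(x,y)\mapsto e^{-|x-y|^2/2}$ is a positive definite kernel on $\R^m$.

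For the Euclidean case I would expand the square, writing $e^{-|w_j-w_k|^2/2}=e^{-|w_j|^2/2}\,e^{w_j\cdot w_k}\,e^{-|w_k|^2/2}$. After absorbing the $e^{-|w_j|^2/2}$ factors into redefined coefficients $\tilde z_j = z_j e^{-|w_j|^2/2}$, it remains to check that $(x,y)\mapsto e^{x\cdot y}$ is positive definite. This follows by expanding the exponential as a power series $\sum_{p\geq 0}\tfrac{(x\cdot y)^p}{p!}$ and noting that each term $(x\cdot y)^p=\langle x^{\otimes p},y^{\otimes p}\rangle$ is manifestly a positive definite kernel (it is an inner product of the feature maps $x\mapsto x^{\otimes p}$), so the sum of nonnegative terms is again positive definite. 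Alternatively, one can invoke Bochner's theorem directly: $e^{-|x-y|^2/2}$ is (up to normalization) the Fourier transform of a Gaussian density, which is a positive measure, hence the kernel is positive definite.

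I do not expect a serious obstacle here; the only point requiring a little care is the reduction to finite dimensions, namely checking that an abstract finite-dimensional inner product space embeds isometrically into Euclidean space — this is immediate from Gram--Schmidt, which produces an orthonormal basis and hence an isometry onto $\R^m$ with $m=\dim V$. Everything else is the standard Gaussian-kernel computation.
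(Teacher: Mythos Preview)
Your proposal is correct. Both you and the paper begin with the same reduction: pass to the finite-dimensional span of $v_1,\dots,v_n$ via an orthonormal basis, so that one is working in $\R^m$. From there the arguments diverge. The paper takes a probabilistic route: it observes that $\exp(-\tfrac12|u|^2)=\E[e^{iu\cdot Z}]$ for a standard Gaussian vector $Z$ in $\R^m$, and then writes
\[
\sum_{j,k} z_j\overline{z_k}\,\Phi(v_j-v_k)=\E\Bigl|\sum_j z_j e^{iv_j\cdot Z}\Bigr|^2\ge 0,
\]
which is a one-line feature-map argument. Your primary route is algebraic: you factor $e^{-|w_j-w_k|^2/2}$, absorb the diagonal weights, and then prove that $e^{x\cdot y}$ is positive definite by expanding it as $\sum_p (x\cdot y)^p/p!$ and recognizing each term as an inner product on the $p$-fold tensor power. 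Your alternative via Bochner's theorem is essentially the paper's argument viewed through a theorem rather than an explicit Gaussian random vector. All three approaches are standard; the paper's is the shortest and most self-contained, while your power-series argument has the mild virtue of avoiding any probabilistic input.
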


  \begin{proof}
    Let $v_1,\ldots v_n$ be elements of $S$, and choose an orthonormal
    basis $e_1,\ldots,e_m$ of the span of $\{v_1,\ldots v_n\}$. Let $Z =
    (Z_1,\ldots, Z_m)$ be a vector of independent standard normal real
    random variables, and note that for all $u \in \R^m$, we have
    \[
    \Phi\left(\sum_{j=1}^m u_j e_j\right) = \exp\left(-\frac{1}{2}\sum_{i=1}^m
    u_i^2 \right) = \E[e^{iu\cdot Z}],
    \]
    which implies that
    \begin{align*}
    \sum_{j,k=1}^nz_j\overline{z_k}\Phi(v_j-v_k) &=
    \sum_{j,k=1}^nz_j\overline{z_k}\E[e^{i(v_j - v_k)\cdot Z}]
    = \E\left|\sum_{j=1}^n z_j e^{iv_j\cdot Z}\right|^2 \geq 0,
    \end{align*}
    as desired.
  \end{proof}

  We will apply Theorem~\ref{minlos} to construct a measure $\mu$ on
  $\s'(\R^d)$ which we will refer to as white noise $W$.  Recall that
  $\s(\mathbb R^d)$ is a nuclear space and let us define the functional
  \[
  \Phi_0(\phi) = \exp\left(-\frac{1}{2}\|\phi\|_{L^2(\R^d)}^2\right),\quad \text{for all
  }\phi\in \s(\mathbb R^d).
  \]
  By Proposition~\ref{prop:posdef}, this functional is positive
  definite. Since it is also continuous and satisfies $\Phi_0(0) = 1$,
  Theorem~\ref{minlos} implies that there is a unique probability measure
  on $\s'(\mathbb R)$ having $\Phi_0$ as its characteristic function, which we
  define as white noise $W$.  In particular we have the relation
  \[
  \int_{\s'(\mathbb R^d)}e^{i( x, \phi)}\,d\mu(x) =
  \exp\left(-\frac{1}{2}\|\phi\|_{L^2(\R^d)}^2\right),\quad \phi\in \s(\mathbb R^d),
  \]
  which implies for \label{p:white_noise_ext} every $f\in\s (\mathbb R^d)$
  the random variable $(W,f)$ is a centered Gaussian with variance
  $\|f\|^2_{L^2(\R^d)}$.

  An alternative to the preceding view of white noise as a random tempered
  distribution is to regard white noise as a collection of random variables
  $\{(W,f)\,:\,f\in \s(R^d)\}$. The advantage of this perspective is that
  we may extend this collection so that $(W,f)$ is a well-defined random
  variable for all $f\in L^2(\R^d)$. However, in this construction
  $f\mapsto (W,f)$ is no longer almost surely continuous. Recall the
  following definition from \cite{janson1997gaussian} or
    \cite{sheffield2007gaussian}.
  \begin{definition}
    A \textbf{Gaussian Hilbert space} is a collection of Gaussian random
    variables on a common probability space $(\Omega,\mathcal{F},\mu)$
    which is equipped with the $L^2(\Omega,\mathcal{F},\mu)$ inner product
    and is closed with respect to the norm of the
    $L^2(\Omega,\mathcal{F},\mu)$ inner product.
  \end{definition}

  To define a Gaussian Hilbert space $\{(W,f)\,:\,f \in L^2(\R^d)\}$ where
  $W$ is a white noise, we consider the map from $\s(\R^d)$ to
  $L^2(\Omega)$ which sends $\phi\in \s(\R^d)$ to the random variable
  $(W,\phi)$ (here $\Omega$ denotes the underlying probability
  space). Since $\E[(W,\phi)^2] = \|\phi\|^2_{L^2(\R^d)}$, this map is an
  isometry. Since $L^2(\Omega)$ is complete, we may extend this isometry to
  an operator from $L^2(\R^d)$ to $L^2(\Omega)$ by defining
  $(W,f)\colonequals \lim_{n\to\infty} (W,\phi_n)$ where $\phi_n \in
  \s(\R^d)$ and $\phi_n \to f$ in $L^2(R^d)$ as $n\to\infty$.
                    Since $\E[e^{i \xi (h,\phi_n)}] \to \E[e^{i \xi (h,\phi)}]$ by the
  bounded convergence theorem, we have $(W,f) \sim
  \mathcal{N}\left(0,\|f\|_{L^2(\R^d)}^2\right)$ for all $f\in
  L^2(\R^d)$. We call $\{(W,f)\,:\,f\in L^2(\R^d)\}$ a white noise
  Gaussian Hilbert space. Given $f,g\in L^2(\mathbb R^d)$ we may apply this
  fact to $(W,f+g)$ to see that
  \[
  \Cov[(W,f),(W,g)] = (f,g)_{L^2(\R^d)},
  \]
  so if $f$ and $g$ are orthogonal with respect to the $L^2(\mathbb R^d)$
  inner product, then $(W,f)$ and $(W,g)$ are independent.  We may rewrite the
  above expression as
  \[
    \Cov[(W,f),(W,g)] = \int_{\mathbb R^d}\int_{\mathbb R^d}\delta(x-y) f(x)g(y)\,dx\,dy,
  \]
  and say that $W$ has covariance kernel $\delta(x-y)$ (here
  $\delta(x)\,dx$ is notation for the Dirac measure which assigns unit mass
  to the origin). In Section \ref{sec:covariance kernel}, we will
  compute the covariance kernel of the $\FGF_s(\R^d)$ for general $s$
  and $d$.

\section{\texorpdfstring{The FGF on $\R^d$}{The FGF on Rd}}
\label{sec:whole-space}
\makeatletter{}We provide the construction of $\FGF_s(\R^d)$ following the same procedure
as used in Section \ref{subsec:white_noise} for white noise.  We also
compute the covariance kernel for the $\FGF_s(\R^d)$.

\subsection{\texorpdfstring{Definition of $\FGF_s(\R^d)$}{Definition
    of FGF(R^d)}}\label{Rd}

We begin with some heuristic motivation for the rigorous construction that
follows. We want to define $h$ to be a standard Gaussian on $\dot
H^s(\R^d)$. As a first guess, we might try to define a random element $h$ of
$\dot H^s(\R^d)$ so that for all $f\in \dot H^s(\R^d)$, we have
\begin{equation} \label{eq:want}
(h,f)_{\dot H^s(\R^d)} \sim \mathcal{N}\left(0,\|f\|^2 _{\dot H^s(\R^d)}\right).
\end{equation}
However, since $\dot H^s(\R^d)$ is infinite dimensional, no such random
element exists \cite{janson1997gaussian,sheffield2007gaussian}. However, we
note that when $h,f \in \s_{H}(\R^d)$, we have
\begin{equation} \label{eq:switch}
(h,f)_{\dot H^s(\R^d)} = (h,(-\Delta)^sf)_{L^2(\R^d)}.
\end{equation}
Therefore, substituting \eqref{eq:switch} into \eqref{eq:want} and defining
$\phi \colonequals (-\Delta)^s f$, we find that it is reasonable to change
the desired relation from \eqref{eq:want} to
\begin{equation} \label{eq:prelimvar} \E\left[(h,\phi)_{L^2(\R^d)}^2\right]
  = \E\left[(h,(-\Delta)^sf)^2_{\dot H^s(\R^d)}\right] =
  \|(-\Delta)^{-s}f\|^2_{\dot H^{s}(\R^d)}.
\end{equation}
The advantage of this formulation is that we may reinterpret it by
replacing the inner product $(h,\phi)_{L^2(\R^d)}^2$ with the evaluation of
a continuous linear functional $(h,\cdot)$ at $\phi \in \s_{H}(\R^d)$. The
norm on the right-hand side can be rewritten as
\[
\|(-\Delta)^{-s}\phi\|^2_{\dot H^{s}(\R^d)} = \int_{\R^d}
|\xi|^{2s}|\xi|^{-4s} |\wh{\phi}(\xi)|^2 \,d\xi = \|\phi\|^2_{\dot
  H^{-s}(\R^d)}.
\]
So, if $h$ is a random element of $\s_{H}'(\R^d)$ with the property that
\begin{equation} \label{eq:FGFdef}
(h,\phi) \sim \mathcal{N}\left(0,\|\phi\|_{\dot H^{-s}(\R^d)}^2\right) \quad \text{ for all }\phi \in \s_{H}(\R^d),
\end{equation}
then we say that $h$ is a \textbf{fractional Gaussian field with parameter
  $s$} on $\R^d$ and write $h\sim \FGF_s(\R^d)$; note that by abuse of
notation we refer to either $h$ or its law as $\FGF_s(\R^d)$. We note that
when $h \sim \FGF_s(\R^d)$ and $a>0$, the scaling relation
\[
x\mapsto h(ax) \stackrel{d}{=} a^{s-d/2} h
\]
follows from \eqref{eq:FGFdef} (here we are interpreting $x\mapsto h(ax)$
as a distribution via $(x\mapsto h(ax),\phi) = a^{-d}(h,x\mapsto
\phi(x/a))$). For more discussion of FGF scaling and its relationship
  to the scaling properties of statistical physics models, see
  \cite{newman1980self,dobrushin1979gaussian}.

We now provide a construction establishing the existence of fractional
Gaussian fields. We would like to apply the Bochner-Minlos theorem with the
functional $\phi\mapsto
\exp\left(-\tfrac{1}{2}\|\phi\|^2_{H^{-s}(\R^d)}\right)$, but this
functional is only finite when $\phi \in \s_H(\R^d)$, not for all $\phi \in
\s(\R^d)$. Therefore, we define a functional \eqref{eq:charfun} which is
finite for all Schwartz functions and which reduces to $\phi\mapsto
\exp\left(-\tfrac{1}{2}\|\phi\|^2_{H^{-s}(\R^d)}\right)$ whenever $\phi \in
\s_H(\R^d)$.

Let $\{\phi_\alpha:\alpha\text{ is a multi-index}\}$ be a collection
Schwartz functions such that $\int_{\R^d} x^\alpha \phi_\beta(x)\,dx =
\one_{\{\alpha = \beta\}}$. Such a collection may be obtained via a
Gram-Schmidt procedure. Define the functional $C_s:\s_{H}(\R^d) \to \R$ by
\begin{equation} \label{eq:charfun}
C_s(\phi) = \exp\left(-\frac{1}{2}\left\|\phi - \sum_{|\alpha| \leq \lfloor H
    \rfloor}\phi_\alpha \int_{\R^d}x^\alpha \phi(x) \,dx\right\|_{\dot H^{-s}(\R^d)}^2\right).
\end{equation}
By Proposition~\ref{prop:posdef}, $C_s$ is positive definite. Since $C_s$
is also continuous and satisfies $C_s(0) = 1$, we may apply the
\hyperref[minlos]{Bochner-Minlos theorem} to conclude that there is a
random tempered distribution $h$ such that $\E[e^{i(h,\phi)}]=C_s(\phi)$
for all $\phi \in \s(R^d)$. Considering $h$ as a random element of
$\s_{H}'(\R^d)$ by restricting its domain to $\s_{H}(\R^d)$, we obtain a
random element of $\s_{H}'(\R^d)$ which satisfies \eqref{eq:FGFdef} (note
that this restriction is necessary so that the definition does not depend
on the arbitrary choice of functions $\phi_\alpha$).

As we did for white noise (see page~\pageref{p:white_noise_ext}), we may
define a Gaussian Hilbert space $\{(h,\phi)\,:\,\phi \in T_s(\R^d)\}$ for a
class $T_s(\R^d)$ of test functions larger than $\s_{H}(\R^d)$. In
particular, we \label{not:tsrd} define $T_s(\R^d)$ to be the closure of
$\s_\Hu(\R^d)$ in $\dot H^{-s}(\R^d)$. Consider the isometry from
$T_s(\R^d)$ to $L^2(\Omega)$ which sends $\phi\in \s_\Hu(\R^d)$ to the
random variable $(h,\phi)$; we extend this isometry to an operator from
$T_s(\R^d)$ to $L^2(\Omega)$. Writing $\phi \in T_s(\R^d)$ as a limit of
functions in $\s_{H}(\R^d)$ and considering the limit of the corresponding
characteristic functions, we conclude that
\[
(h,\phi) \sim
\mathcal{N}\left(0,\|\phi\|_{\dot H^{-s}(\R^d)}^2\right)\text{ for all
 } \phi\in T_s(\R^d).
\]
We call $\{(h,\phi)\,:\,\phi \in T_s(\R^d)\}$ an $\FGF_s(\R^d)$ Gaussian
Hilbert space.

We now make sense of the expression $h=(-\La)^{-s/2}W$ (see
\eqref{eqn:fgflaplaciandef}). Let $W$ be a white noise on $\R^d$. Observe
that $(-\Delta)^{-s/2} \phi \in L^2(\R^d)$ for all $\phi \in
T_s(\R^d)$. Therefore, we may define for all $\phi\in T_s(\R^d)$ the random
variable $(h,\phi) = (W,(-\Delta)^{-s/2}\phi)$. In this way, we have
constructed a coupling between an $\FGF_s(\R^d)$ Gaussian Hilbert space
$\{(h,\phi)\,:\,\phi \in T_s(\R^d)\}$ and a white noise Gaussian Hilbert
space $\{(W,\phi)\,:\,\phi \in L^2(\R^d)\}$ so that $(h,\phi) =
(W,(-\Delta)^{-s/2}\phi)$. In this sense we can say that
$h=(-\La)^{-s/2}W$. For a coupling in which this equation holds almost
surely, see Proposition~\ref{prop:FGF_coupling}.

\begin{remark}
  Computing $||\phi||_{\dot H^{-s}(\R^d)}^2$ amounts to computing the
  covariance kernel of the $\FGF_s(\R^d)$, which will be done in Section
  \ref{sec:covariance kernel}.
\end{remark}

\begin{remark}
  Since $C_c^{\infty}(\R^d)$ is dense in $\s(\R^d)$, the $\FGF_s(\R^d)$ is
  uniquely determined by the random variables $\{(h,\phi_n)\}_{n\geq1}$
  where $\phi_n$ is a dense (in $\s(\R^d)$) sequence of $C_c^\infty(\R^d)$
  functions.
\end{remark}

\subsection{The FGF covariance kernel}\label{sec:covariance kernel}

Given $h\sim \FGF_s(\R^d)$ with Hurst parameter $\Hu = s-d/2$, let
$G^s(x,y)$ be a function (or generalized function) such that for
$\phi_1,\phi_2\in C^\infty_c(\R^d)\cap T_s(\R^d)$ we have
\begin{equation}\label{eq:correlation}
  \Cov[(h,\phi_1),(h,\phi_2)] = (\phi_1,\phi_2)_{\dot H^{-s}(\R^d)}  = \int_{\mathbb{R}^d}\int_{\mathbb{R}^d}
  G^s(x,y)\phi_1(x)\phi_2(y)\,dx\,dy.
\end{equation}
We call $G^s(x,y)$ a \textbf{covariance kernel} of the $\FGF_s(\R^d)$.
We point out that there can be more than one function $G^s$ satisfying
(\ref{eq:correlation}).  For example, if $\Hu\geq 0$ and $G^s(x,y)$
satisfies (\ref{eq:correlation}), then so does $G^s(x,y)+g(x,y)$ for any
polynomial $g$ in $x$ or in $y$ of degree no greater than $\lfloor
\Hu\rfloor$.

In this section we compute covariance kernels for the fractional Gaussian
fields on $\R^d$. For most positive values of $s$, we find that $G^s(x,y) =
C(s,d)|x-y|^{2\Hu}$ for some constant $C(s,d)$. When $s < 0$ the formula is
similar but involves some derivatives of the delta function, and when $H$ is
a nonnegative integer there is a logarithmic correction. The constant
$C(s,d)$, and therefore also the correlation of $\FGF_s(\R^d)$, is positive
when $s\in (0,d/2)$, is $(-1)^{\lfloor s \rfloor}$ when $s$ is a negative
non-integer, and is $(-1)^{1+\lfloor H \rfloor}$ when $H$ is a positive
non-integer. The statement and proof of the following theorem are adapted
from \cite[Chapter 1, \S 1]{landkof1972foundations}.

\begin{theorem} \label{KernelComp} Each of the following holds.
  \begin{enumerate}[label=(\roman*),leftmargin=*,widest=5]
  \item \label{itm:(i)}  If $\Hu\in\left(-\frac{d}{2},\infty\right)$ (that is, $s>0$) and $\Hu$ is not a nonnegative integer, then
    \[
    G^s(x,y) = C(s,d)|x-y|^{2\Hu}
    \]
    satisfies \eqref{eq:correlation}, where
    \[
    C(s,d) =
    \frac{2^{-2s}\pi^{-d/2}\Gamma\left(\frac{d}{2}-s\right)}{\Gamma(s)}.
    \]
  \item \label{itm:(ii)} If $s<0$ (that is, $\Hu<-d/2$) and $s\in(-k-1,-k)$
    where $k$ is a nonnegative integer, then $ \Cov[(h,\phi_1),(h,\phi_2)]
    $ is given by
    \[
    \int_{\R^d}\int_{\R^d}C(s,d)|x-y|^{2\Hu}\left[\phi_1(x)\phi_2(y) -
      \sum_{j=0}^{k} \phi_1(x)H_j\La^j\phi_2(x)|x-y|^{2j}\right],
    \]
    where
    \[
    H_j = \frac{\Omega_d}{2^j j! d(d+2)\cdots (d+2j-2)},
    \]
    and $\Omega_d = \frac{2\pi^{d/2}}{\Gamma\left(d/2\right)}$ is the
    surface area of the unit sphere in $\R^d$.
                          \item \label{itm:(iii)} If $s=-k$ where $k$ is a nonnegative integer, then
    $\Cov[(h,\phi_1),(h,\phi_2)] $ is given by
    \[
    \int_{\R^d} \phi_1(x)(-\La)^k \phi_2(x)\,dx.
    \]
  \item \label{itm:(iv)} If $\Hu$ is a nonnegative integer $k$, then
    \[
    G^s(x,y) =  2\,c_{-1}^{(\frac{d}{2}+k)}|x-y|^{2\Hu}\log|x-y|,
    \]
    satisfies \eqref{eq:correlation}, where $c_{-1}^{(\frac{d}{2}+k)}$ is
    the residue at $\frac{d}{2}+k$ of $s\mapsto C(s,d)$: 
    \[
    c_{-1}^{(\frac{d}{2}+k)} = \frac{(-1)^{k+1}2^{-2k-d} \pi ^{-d/2}}{k!\,\Gamma(\frac{d}{2}+k)}.
    \]
  \end{enumerate}
\end{theorem}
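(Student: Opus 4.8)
The plan is to compute the Fourier-side pairing $(\phi_1,\phi_2)_{\dot H^{-s}(\R^d)} = \int_{\R^d} |\xi|^{-2s} \wh{\phi_1}(\xi)\overline{\wh{\phi_2}(\xi)}\,d\xi$ directly, by recognizing $|\xi|^{-2s}$ as (a multiple of) the Fourier transform of a Riesz kernel $|x|^{2\Hu}$, and then chase through the Gamma-function bookkeeping in each of the four cases. The starting point is the classical identity, valid as an equality of tempered distributions on $\R^d$ when $0 < 2s < d$ (equivalently $-d/2 < \Hu < 0$), that the Fourier transform of $|x|^{-d+2s} = |x|^{2\Hu}$ is $C(s,d)^{-1}\,$ times $|\xi|^{-2s}$, with the constant $C(s,d) = 2^{-2s}\pi^{-d/2}\Gamma(d/2-s)/\Gamma(s)$ coming from the standard Beta-integral evaluation $\int_{\R^d} |x|^{-\lambda} e^{-i\xi\cdot x}\,dx = 2^{d-\lambda}\pi^{d/2}\frac{\Gamma((d-\lambda)/2)}{\Gamma(\lambda/2)}|\xi|^{\lambda - d}$ for $0 < \lambda < d$. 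Once this is in hand, Plancherel gives
\[
(\phi_1,\phi_2)_{\dot H^{-s}(\R^d)} = C(s,d)\int_{\R^d}\int_{\R^d} |x-y|^{2\Hu}\phi_1(x)\phi_2(y)\,dx\,dy
\]
for $\Hu \in (-d/2, 0)$; this immediately gives a sub-case of part \ref{itm:(i)}. Extending to all non-integer $\Hu > 0$ in \ref{itm:(i)} is then a matter of meromorphic continuation in $s$: both sides are analytic functions of $s$ when tested against $\phi_1,\phi_2 \in \s_H(\R^d)$ (the moment-killing conditions are exactly what make $\int |x-y|^{2\Hu}\phi_1(x)\phi_2(y)$ converge and kill the poles of $C(s,d)$ at $s = d/2 + k$), so the identity persists by the identity theorem away from the poles.

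For part \ref{itm:(iii)}, $s = -k$: here $|\xi|^{-2s} = |\xi|^{2k}$ is a polynomial, so $\int |\xi|^{2k}\wh{\phi_1}\overline{\wh{\phi_2}}\,d\xi$ is just $((-\Delta)^k\phi_1, \phi_2)_{L^2}$ by definition \eqref{eq:fraclapdef_smooth} of the poly-Laplacian and Plancherel — essentially immediate, and one should also observe $((-\Delta)^k\phi_1,\phi_2) = (\phi_1, (-\Delta)^k\phi_2)$ to match the stated form. For part \ref{itm:(ii)}, $s \in (-k-1,-k)$ with $k \geq 0$: now $|x-y|^{2\Hu}$ is too singular to integrate against $\phi_1(x)\phi_2(y)$ outright (it is locally $|x-y|^{2\Hu}$ with $2\Hu < -d$), so the integral must be understood as a Hadamard finite part. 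The idea is to subtract the Taylor expansion of $y \mapsto \phi_2(y)$ around $y = x$ up to order $2k$, which is precisely what the correction terms $\sum_{j=0}^k \phi_1(x) H_j \Delta^j\phi_2(x)|x-y|^{2j}$ accomplish: after the subtraction the $y$-integral converges absolutely, and one checks that the constants $H_j$ are exactly the angular averages $H_j = \Omega_d/(2^j j!\, d(d+2)\cdots(d+2j-2))$ arising from $\int_{S^{d-1}} \omega^\beta\,d\omega$ applied to the degree-$2j$ part of the Taylor polynomial (only even-order terms survive the radial-angular split). This identifies the regularized Riesz kernel, and then analytic continuation from the case already established in \ref{itm:(i)} confirms the constant is still $C(s,d)|x-y|^{2\Hu}$ (note $C(s,d)$ has no pole for $s<0$ non-integer, but $\Gamma(s)$ in the denominator means $C(s,d)$ changes sign as $\lfloor s\rfloor$ changes).

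For part \ref{itm:(iv)}, the logarithmic case $\Hu = k \in \{0,1,2,\dots\}$ (i.e. $s = d/2 + k$): here $C(s,d)$ has a simple pole, reflecting the fact that $|\xi|^{-2s}$ cannot be extended past these values by a pure power. The plan is a limiting/residue argument: write $s = d/2 + k + \eps$ and expand both sides in $\eps$. On the left, $(\phi_1,\phi_2)_{\dot H^{-s}}$ is analytic in $\eps$ near $0$ because $\phi_1,\phi_2 \in \s_H(\R^d) = \s_k(\R^d)$ kills the otherwise-divergent low-frequency part. On the right, $C(s,d)|x-y|^{2\Hu}$ has a simple pole $c_{-1}^{(d/2+k)}/\eps \cdot |x-y|^{2k}$, but $\int |x-y|^{2k}\phi_1(x)\phi_2(y)\,dx\,dy = 0$ when $\phi_1 \in \s_k$ (the degree-$2k \le 2k$ polynomial moments vanish — actually one needs the cross-moments, which follow from $\int x^\alpha \phi_1(x)dx = 0$ for $|\alpha|\le k$ since $|x-y|^{2k}$ expands into monomials $x^\alpha y^\beta$ with $|\alpha|+|\beta| = 2k$, so at least one of $|\alpha|,|\beta| \le k$ — and symmetrizing handles the boundary case). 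Hence the pole cancels, and taking $\frac{d}{d\eps}\big|_{\eps=0}$ of $|x-y|^{2\Hu} = |x-y|^{2k}e^{2\eps\log|x-y|} = |x-y|^{2k}(1 + 2\eps\log|x-y| + \cdots)$ produces the claimed kernel $2c_{-1}^{(d/2+k)}|x-y|^{2k}\log|x-y|$; the residue formula for $c_{-1}^{(d/2+k)}$ is read off from $\operatorname{Res}_{s=d/2+k} \Gamma(d/2-s) = (-1)^k/k!$ together with the remaining smooth factors of $C$ evaluated at $s = d/2+k$.

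The main obstacle is the regularization bookkeeping in parts \ref{itm:(ii)} and \ref{itm:(iv)}: making precise the sense in which the divergent integrals are to be read (Hadamard finite part versus analytic continuation), verifying that the two notions agree, and pinning down the combinatorial constants $H_j$ and the residue $c_{-1}^{(d/2+k)}$ against the angular-integration and Gamma-function identities. The clean way to organize all of this is to treat $s \mapsto (\phi_1,\phi_2)_{\dot H^{-s}(\R^d)}$ (for fixed $\phi_1,\phi_2 \in \bigcap_r \s_r$, say) as a single meromorphic object and derive all four cases as its values, residues, and derivatives at the relevant points — this is the strategy of Landkof that the theorem statement credits, and I would follow it.
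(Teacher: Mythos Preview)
Your proposal is correct and follows essentially the same route as the paper's proof: establish the Riesz-kernel Fourier identity and Plancherel for the base strip $0<s<d/2$, then meromorphically continue in $s$ (citing Landkof) to handle \ref{itm:(i)} for larger $H$, the Taylor-subtracted/finite-part form for \ref{itm:(ii)} and \ref{itm:(iii)}, and the residue/limit argument at $s=d/2+k$ for \ref{itm:(iv)}. The paper's write-up is slightly terser (it simply invokes Landkof's equation (1.1.10) for \ref{itm:(ii)}--\ref{itm:(iii)} and does the $t\to s$ Taylor expansion for \ref{itm:(iv)}), but the content and organization match yours.
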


\begin{remark}\label{rmk:covariance kernel}
  In case \ref{itm:(ii)} above, we can also write
  \[
  G^s(x,y)= C(s,d)|x-y|^{2\Hu}\left[1 -\sum_{j=0}^{k} |x-y|^{2j}H_j\La^{j}\delta(x-y)\right],
  \]
  Similarly, in case \ref{itm:(iii)},
  \[
  G^s(x,y)=(-\La)^k \delta(x-y).
  \]
\end{remark}

\begin{proof}[Proof of Theorem \ref{KernelComp}]
	\ref{itm:(i)} We first assume $\Hu\in\left(-\frac{d}{2},0\right)$ and let
  $h\sim \FGF_s(\R^d)$.  Let $\phi_1,\phi_2\in \s(\R^d)$. Then we may compute the covariance:
  \begin{align*}
	  \Cov[(h,\phi_1),(h,\phi_2)] &= \int_{\mathbb{R}^d}|\xi|^{-2s}\hat{\phi}_1(\xi)\overline{\hat{\phi}_2(\xi)}\, d\xi,\\
    &= (|\xi|^{-2s} \hat{\phi}_1, \hat{\phi}_2)_{L^2(\R^d)},\\
    &= \left(\mathcal{F}^{-1}(|\xi|^{-2s})*\phi_1, \phi_2\right)_{L^2(\R^d)},\\
    &= \int_{\mathbb{R}^d} \int_{\mathbb{R}^d}C(s,d)|x-y|^{2\Hu} \phi_1(x)\phi_2(y)\,dx\,dy,
  \end{align*}
  where in the third line we used the Plancherel theorem, and in the last
  line we used the following Fourier transform formula given in
  \cite[Chapter 1, \S 1]{landkof1972foundations}: 
  \begin{equation}
	  \mathcal{F}\left[C(s,d)|x|^{2\Hu}\right] = |\xi|^{-2s}. \label{eqn:fourCor}
  \end{equation}
  It is important to note that \eqref{eqn:fourCor} is only valid for
  $0<s<d/2$ when the class of test functions is taken to be $\s(\R^d)$.
  Indeed, $|\xi|^{-2s}$ is not a tempered distribution when $s\geq d/2$
  (due to the singularity at the origin), and $C(s,d)|x|^{2\Hu}$ is not a
  tempered distribution when $s\leq0$. 
      Therefore, we extend the Fourier transform formula \eqref{eqn:fourCor}
  outside of the region $\Hu\in\left(-\frac{d}{2},0\right)$.  Now for
  $\Hu\geq 0$ and non-integral, since $\phi_2(y) \in \s_{H}(\R^d)$ it
  follows that for all $N\geq 0$, $\phi_2(y)=O(|y|^{-N})$ as
  $|y|\to\infty$, thus
  \[
  \psi(x,s) \colonequals C(s,d) \int_{\R^d} |x-y|^{2\Hu} \phi_2(y)\,dy
  \]
  is a smooth function of $x$ and an analytic function of $s$ for all $\Hu$
  in the range under consideration \cite[p.\ 48]{landkof1972foundations}.
  Furthermore, as $|x|\to\infty$ we have $\psi(x,s) = O(|x|^{2\Hu})$ so
  that $\phi_1(x)\psi(x,s)$ is integrable in $x$ and analytic in $s$ for
  all $\Hu$ in the range under consideration.  By an analytic continuation
  argument as in \cite[Chapter 1, \S 1]{landkof1972foundations},
  \ref{itm:(i)} follows.

  Formulas (ii) and \ref{itm:(iii)} follow directly from equation (1.1.10) in
  \cite{landkof1972foundations}: 
  \[
  \psi(x,s) = C(s,d) \int_{\mathbb R^d}\left[\phi_2(y) - \sum_{j=0}^k H_j \La^j \phi_2(x) |x-y|^{2j} \right]|x-y|^{2\Hu}\,dy,
  \]
  where $\psi(x,s)$ is an analytic continuation from $0<s<d/2$ to $s\in
  (-k-1,-k]$.  The result for \ref{itm:(iii)} follows from the equality 
  \[
  \psi(x,-k) = (-1)^k \La^k \phi_2(x).
  \]

  Finally, to obtain \ref{itm:(iv)} we will take a limit as $t\to s$ of
  both sides of 
  \begin{equation} \label{eq:iv}
    \| \phi \|^2_{\dot H^{-t}(\R^d)} = \int_{\R^d}\int_{\R^d} C(t,d)
    |x-y|^{2t-d} \phi(x)\phi(y)\,dx\,dy; 
  \end{equation}
  see \cite[p.\ 50]{landkof1972foundations} for more details. Since
  $\phi_1$ and $\phi_2$ are in $\s_k(\R^d)$, we have $\int_{\R^d} x^j
  \phi_1(x)\,dx = \int_{\R^d}y^j\phi_2(y)\,dy = 0$ for all $0\leq j \leq
  k$. This implies
  \begin{multline*}
    \int_{\R^d}\int_{\R^d}|x-y|^{2t-d}\phi_1(x)\phi_2(y)\,dx\,dy =\\
    \int_{\R^d}\int_{\R^d}(|x-y|^{2t-d}-|x-y|^{2s-d})\phi_1(x)\phi_2(y)\,dx\,dy.
  \end{multline*}
  We use Taylor's theorem to write  
  \begin{align*}
    \lefteqn{|x-y|^{2t-d}-|x-y|^{2s-d}} \\ &= 2(t-s)|x-y|^{2k}\ln|x-y| +
    O\left(\left((t-s)|x-y|^{2k}\ln|x-y|\right)^2\right),  
  \end{align*}
  and substitute into \eqref{eq:iv}. Taking $t\to s$ and using $\lim_{t\to
    s}(t-s)C(t,d)=\Res_{t=s}C(t,d)$,
              we obtain \ref{itm:(iv)}.  The formula for $c_{-1}^{(\frac{d}{2}+k)}$ follows from
  the fact that the residue of $\Gamma$ at a negative integer $-n$ is
  $(-1)^n/n!$.
            \end{proof}

\section{The FGF on a domain}\label{sec:domain}
\makeatletter{}\subsection{The space \texorpdfstring{$\dH^s_0(D)$}{dot H\^{}s\_0(D)}}\label{H0D}
Let $s\geq 0$, and let $D\subset \R^d$ be a domain. Recall that
$C^\infty_c(D)$ denotes the set of smooth functions supported on a compact
subset of $D$. \label{not:Ccinfty} We have $C^\infty_c(D) \subset
\dH^{s}(\R^d)$ from the definition of $\dH^{s}(\R^d)$ and the closure of
the complex Schwartz functions under the Fourier transform (see
Section~\ref{subsec:tempdist}). We may therefore define the set
$\dH^s_0(D)$ to be the closure of $C^\infty_c(D)$ in
$\dH^{s}(\R^d)$ \label{not:hs0d} and equip it with the $\dH^{s}(\R^d)$
inner product.

\begin{definition}\label{Defallow}
  We call a domain $D\subset \R^d$ \textbf{allowable} for all $\phi\in
  \s(\R^d)$ there exists $C=C(D,d,\phi)<\infty$ such that for all $g\in
  C_c^\infty(D)$, we have
    \[
    |(\phi,g)_{L^2(\R^d)}|\leq C\|g\|_{\dot H^s(\R^d)}.
    \] 
\end{definition}

We will construct a fractional Gaussian field $\FGF_s(D)$ for all allowable
domains $D\subset \R^d$ (see Remark~\ref{rmk:zero boundary FGF}). The
following lemma gives sufficient conditions for a domain to be allowable.

\begin{lemma}\label{lem:allowable-domain}
  Let $s\geq 0$. If $H = s - d/2$ is not a nonnegative integer, then every
  proper subdomain of $\R^d$ is allowable. If $H = s-d/2$ is a nonnegative
  integer, then a domain $D$ is allowable if $\R^d \setminus D$ contains an
  open set.
\end{lemma}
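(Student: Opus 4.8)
The plan is to reduce allowability of $D$ to the following statement: for every $\phi\in\s(\R^d)$ there is a tempered distribution $\rho$ with $\supp\rho\subseteq\R^d\setminus D$ and $\phi+\rho\in\dH^{-s}(\R^d)$. Granting this, for every $g\in C_c^\infty(D)$ the supports of $\rho$ and $g$ are disjoint, so $(\rho,g)=0$ and hence $(\phi,g)_{L^2(\R^d)}=(\phi+\rho,g)$; using the characterization $\dH^{\pm s}(\R^d)=\{f:\wh f\in L^2(|\xi|^{\pm 2s}\,d\xi)\}$, Parseval, and Cauchy--Schwarz with the weights $|\xi|^{-s}$ and $|\xi|^{s}$,
\[
|(\phi,g)_{L^2(\R^d)}|=\left|\int_{\R^d}\wh{(\phi+\rho)}(\xi)\,\overline{\wh g(\xi)}\,d\xi\right|\le\|\phi+\rho\|_{\dH^{-s}(\R^d)}\,\|g\|_{\dH^{s}(\R^d)},
\]
so $D$ is allowable with $C=\|\phi+\rho\|_{\dH^{-s}(\R^d)}=C(D,d,\phi)$. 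Since $\wh\phi$ is Schwartz and $|\xi|^{-2s}\le 1$ for $|\xi|\ge1$, the condition $\phi+\rho\in\dH^{-s}(\R^d)$ only involves the behavior of $\wh{(\phi+\rho)}$ near $0$ and near $\infty$. The case $s<d/2$ (that is, $H<0$) is then immediate with $\rho=0$: since $2s<d$ the integral $\int|\wh\phi|^2|\xi|^{-2s}\,d\xi$ converges, so $\phi\in\dH^{-s}(\R^d)$ and every domain is allowable. Assume from now on $s\ge d/2$, i.e.\ $H\ge0$.

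Suppose first that $H$ is not a nonnegative integer (so $H>0$) and $D$ is a proper subdomain; fix a point $z\in\R^d\setminus D$. I would look for $\rho=\sum_{|\alpha|\le\lfloor H\rfloor}c_\alpha\,\partial^\alpha\delta_z$, whose Fourier transform is $(2\pi)^{-d/2}\bigl(\sum_{|\alpha|\le\lfloor H\rfloor}c_\alpha(i\xi)^\alpha\bigr)e^{-iz\cdot\xi}$. Because $\xi\mapsto e^{-iz\cdot\xi}$ is smooth and nowhere vanishing, one can solve a triangular linear system to choose the $c_\alpha$ so that $\wh{(\phi+\rho)}$ and all its derivatives of order $\le\lfloor H\rfloor$ vanish at the origin. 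Then $|\wh{(\phi+\rho)}(\xi)|^2|\xi|^{-2s}=O(|\xi|^{2\lfloor H\rfloor+2-2s})$ near $0$, which is integrable since $\lfloor H\rfloor+1>H$, and $=O(|\xi|^{2\lfloor H\rfloor-2s})$ near $\infty$, which is integrable precisely because $\lfloor H\rfloor<H$ --- the one place the hypothesis $H\notin\Z$ is used. Hence $\phi+\rho\in\dH^{-s}(\R^d)$, and $\supp\rho=\{z\}\subseteq\R^d\setminus D$. (If a real-valued correction is wanted, replace $\rho$ by its real part, which changes none of the above.)

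Suppose instead that $H=k$ is a nonnegative integer and $\R^d\setminus D$ contains an open set $U$. The point-mass correction now fails --- it would give $|\wh{(\phi+\rho)}|^2|\xi|^{-2s}\sim|\xi|^{2k-2s}=|\xi|^{-d}$ near $\infty$, which is not integrable --- and this is exactly why an open set in the complement is assumed. Instead I would choose $\rho\in C_c^\infty(U)$ with $\int_{\R^d}x^\alpha\rho(x)\,dx=-\int_{\R^d}x^\alpha\phi(x)\,dx$ for all $|\alpha|\le k$; such $\rho$ exists because the moment map $C_c^\infty(U)\to\R^{\#\{|\alpha|\le k\}}$ is surjective (a linear functional annihilating its image is integration against a polynomial vanishing on the open set $U$, hence is zero). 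Then $\phi+\rho$ is a Schwartz function whose moments up to order $k$ vanish, i.e.\ $\phi+\rho\in\s_k(\R^d)=\s_H(\R^d)$, so $\wh{(\phi+\rho)}$ is Schwartz and vanishes to order $k+1$ at the origin; thus $|\wh{(\phi+\rho)}|^2|\xi|^{-2s}=O(|\xi|^{2-d})$ near $0$ (integrable) and decays rapidly near $\infty$, giving $\phi+\rho\in\dH^{-s}(\R^d)$ with $\supp\rho\subseteq U\subseteq\R^d\setminus D$. This completes all cases. The one genuinely delicate point is the integrability at $\infty$ of $|\wh{(\phi+\rho)}|^2|\xi|^{-2s}$, which is what forces the dichotomy between the two constructions; the remaining ingredients (surjectivity of the moment map, solvability of the triangular system for the $c_\alpha$, and the Parseval identity) are routine.
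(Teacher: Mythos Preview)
Your proof is correct and follows essentially the same approach as the paper: reduce to finding a correction $\rho$ supported outside $D$ with $\phi+\rho\in\dH^{-s}(\R^d)$, use a linear combination of derivatives of a Dirac mass at a point of the complement when $H\notin\Z_{\ge0}$, and use a compactly supported smooth function with prescribed moments when $H\in\Z_{\ge0}$ and the complement has interior. The paper's argument is nearly identical (it places the Dirac mass at the origin after translating, and phrases the integer case as a Gram--Schmidt construction rather than surjectivity of the moment map), and you have correctly isolated the one place the non-integer hypothesis is used, namely the integrability at infinity of $|\xi|^{2\lfloor H\rfloor-2s}$.
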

\begin{proof}
  Let $D\subset \R^d$ be a domain, and let $\phi \in \s(\R^d)$ and $g\in
  C_c^\infty(D)$. We have
  \begin{align*}
    |(\phi,g)_{L^2(\R^d)}| &= \left| \int_{\R^d}
      |\xi|^{-s}\wh{\phi}(\xi)|\xi|^{s}\wh{g}(\xi)\,d\xi \right| \\
    &\leq \left\|\phi\right\|_{\dot H^{-s}(\R^d)} \left\|g\right\|_{\dot H^s(\R^d)},
  \end{align*}
  by the Plancherel formula and Cauchy-Schwarz. If $0\leq s<d/2$, we
  conclude that $\left\|\phi\right\|_{\dot H^{-s}(\R^d)}$ is finite and
  therefore that $D$ is allowable.

  If $H = s - d/2\in\{0,1,\ldots\}$ and $\R^d\setminus D$ contains an open
  set, then let $B$ be a ball contained in $\R^d\setminus D$, and let
  $\eta\in C_c^\infty(\R^d)$ be supported on $B$ and satisfy $\int_{\R^d}
  \eta(x)x^\alpha\,dx=\int_{\R^d} \phi(x)x^\alpha\,dx$ for every
  multi-index $\alpha$ satisfying $|\alpha|\leq H$ (such a function may be
  constructed via a Gram-Schmidt procedure). Since $\eta g = 0$, we have
  \begin{align*}
    (\phi,g)_{L^2(\R^d)}=(\phi-\eta,g)_{L^2(\R^d)} \leq \left\|\phi-\eta\right\|_{\dot H^{-s}(\R^d)} \left\|g\right\|_{\dot
      H^s(\R^d)}
  \end{align*}
  By our choice of $\eta$, the Fourier transform of $\phi - \eta$ vanishes
  to order $H$ at the origin, so $\left\|\phi-\eta\right\|_{\dot
    H^{-s}(\R^d)}$ is finite. Therefore $D$ is allowable. 

  Suppose that $s-d/2 > 0$ is not an integer and that $D \subsetneq \R^d$.
  Without loss of generality, we may assume $D$ does not contain the
  origin.  Let $P^{\phi}$ be the unique polynomial of degree $\lfloor
  H\rfloor$ such that all the derivatives up to order $\lfloor H\rfloor$ of
  $\mathcal{F}(\phi -P^\phi(D)\delta )$ are zero, where $P(D)$ denotes the
  differential operator corresponding to a polynomial $P$ and $\delta$
  denotes a unit Dirac mass at 0. Then
  \[
  |(\phi,g)_{L^2(\R^d)}|=|(\phi-P^{\phi}(D)\delta,g)_{L^2(\R^d)}|\leq
  \left\|\phi-P^{\phi}(D)\delta\right\|_{\dot H^{-s}(\R^d)}
  \left\|g\right\|_{\dot H^s(\R^d)}.
  \] 
  The expression $\left\|\phi-P^{\phi}(D)\delta\right\|_{\dot
    H^{-s}(\R^d)}$ is finite since $\mathcal{F}(\phi-P^{\phi}(D)\delta)$ is
  bounded by a constant times $|\xi|^{\lfloor H \rfloor+1}$ near the origin
  and by a constant times $|\xi|^{\lfloor H \rfloor}$ as $\xi \to\infty$.
\end{proof}

Let $\phi\in \s(\R^d)$, and let $D$ be an allowable domain. By the
definition of allowability, $(\phi,\cdot)_{L^2(\R^d)}$ is a continuous
linear functional on $\dH^s_0(D)$. Therefore, by the Riesz representation
theorem for Hilbert spaces, there exists a unique $f\in \dH^s_0(D)$ such
that $(\phi,g)_{L^2(\R^d)}=(f,g)_{\dot H^s(\R^d)}$ for all $g\in
\dH^s_0(D)$. Writing out the definition of $(f,g)_{\dot H^s(\R^d)}$ and
using the Plancherel formula, we see that this implies that $f$ is the
unique solution of the distributional equation
\begin{equation}\label{eq:fracLap}
(-\La)^s f=\phi, \quad \quad f\in \dH^s_0(D). 
\end{equation}
For $s>0$, we define the semi-norm $||\phi||_{\dot H^{-s}(D)} \colonequals
\|f\|_{\dot H^s(\R^d)}$, where $f$ is determined by $\phi$ via
\eqref{eq:fracLap}. 

Denote by $\s(D)$ the space of functions on $D$ which can be realized as
the restriction of a Schwartz function to $D$. Then
$d(\phi,\psi)\colonequals ||\phi -\psi||_{\dot H^{-s}(D)}$ defines a metric
on $\s(D)$. Taking the completion under this metric as we did at the end of
Section~\ref{subsec:tempdist}, we get a Hilbert space $T_s(D)\subset
\s'(\R^d)$ which will serve as a space of test functions for
$\FGF_s(D)$. \label{not:TsD} 

\subsection{The zero-boundary FGF in a domain}\label{defDomain}
Let $D \subsetneq \R^d$ be an allowable domain, let $s\geq 0$, and define
the functional 
\[
C_{D,\,s}(\phi) \colonequals \exp\left( -\frac{1}{2}\|\phi \|^2_{\dot H^{-s}(D)} \right)
\] 
for $\phi \in \s(\R^d)$. Since $C_{D,s}$ is continuous by the definition of
allowability, we may use Proposition~\ref{prop:posdef} and the
\hyperref[minlos]{Bochner-Minlos Theorem} to $C_{D,\,s}$ to conclude that
there is a unique random element $h_D$ of $\s'(\R^d)$ such that
$(h_D,\phi)$ is a mean-zero Gaussian with variance $||\phi||^2_{\dot
  H^{-s}(D)}$. Since $\E[(h_D,\phi)^2] = 0$ whenever $\phi$ is supported in
$\R^d \setminus D$, the support of $h_D$ is almost surely contained in
$\overline{D}$. We call\footnote{We use the word \textit{boundary} instead
  of \textit{complement} for consistency with the GFF terminology. Note,
  however, that due to the nonlocal nature of the fractional Laplacian, the
  relevant boundary data include the values on $\R^d\setminus \overline{D}$.} $h_D$
the \textit{zero-boundary FGF} on $D$, abbreviated as $\FGF_s(D)$.

\begin{remark} \label{rmk:zero boundary FGF} We construct $h_D\sim
  \FGF_s(D)$ only when $D$ is allowable because we want to ensure that
  $h_D$ is a tempered distribution (rather than a tempered distribution
  modulo a space of polynomials).
                          \end{remark}

We can also define a Gaussian Hilbert space version of $\FGF_s(D)$,
following the corresponding discussion $\FGF_s(\R^d)$ in
Section~\ref{Rd}. In this way we obtain a collection of random variables
$\{(h_D,f)\,:\,f\in T_s(D)\}$ so that $(h_D,f)$ is a centered Gaussian with
variance $\|f||^2_{\dot H^{-s}(D)}$.

If $s$ is an even positive integer, then $\|f\|_{\dH^s_0(D)}=
\|(-\La)^{\Hs}f\|_{L^2(\R^d)}$ for all $f\in C^\infty_0(D)$. If
$s$ is an odd positive integer, then
\[
\|f\|_{\dH^s_0(D)}=\|(-\La)^{\frac{s-1}{2}}f\|_{\dot H^1_0(D)}
\]
for all $f\in C^\infty_0(D)$.  Therefore, if $s=0$ then $h_D$ is white
noise on $D$, and if $s=1$ then $h_D$ is the GFF on $D$.  Thus $\FGF_s(D)$
generalizes the domain versions of white noise and the Gaussian free field.

\subsection{Covariance kernel for the FGF on the unit ball}

Let $s\geq 0$, and let $D$ be an allowable domain. As usual, we say that a
function $G^s_D:D\times D\to \R$ is the $\FGF_s(D)$ covariance kernel if it
satisfies
\begin{equation} \label{eq:green_D}
  \Cov[(h_D,\phi_1),(h_D,\phi_2)] = \int_{\mathbb{R}^d}\int_{\mathbb{R}^d}
  G_D^s(x,y)\phi_1(x)\phi_2(y)\,dx\,dy.
\end{equation} 
for $h_D\sim \FGF_s(D)$ and for all $\phi_1,\phi_2\in C_c^\infty(D)$. We
treat each of the cases 

(i) $s$ is an integer, \\
(ii) $s\in (0,1)$, and \\
(iii) $s$ is a non-integer greater than 1.

Suppose that $s$ is a positive integer, and let $\phi \in \s(B)$. By (2.65)
in Chapter 2 of \cite{gazzola2010polyharmonic}, the unique solution of
\eqref{eq:fracLap} is $f(x)=\int G^s_B(x,y)\phi(y)dy$, where
\begin{equation}\label{Green}
	G^s_B(x,y)= k_{s,d} | x-y|^{2\Hu} \int_1^{\frac{\left||x|y-\frac{x}{|x|}\right|}{|x-y|}}(v^2-1)^{s-1}v^{1-d}dv, \quad x,y\in B
\end{equation}
and 
\[
k_{s,d} = \frac{\Gamma(1+d/2)}{d\pi^{d/2}4^{d-1}((s-1)!)^2}. 
\]
It follows that for all $\phi \in C_c^\infty(D)$, we have 
\begin{equation} \label{eq:different_Green} 
\E[(h_D,\phi)^2] = \|\phi\|_{\dot H^{-s}(D)}^2 = \|f\|_{\dot H_0^s(D)}^2 = 
\iint G_B(x,y)\phi(x)\phi(y) \,dx\,dy,
\end{equation} 
which shows that $G_B^s$ is the $\FGF_s(D)$ covariance kernel. 

Suppose that $0<s<1$. Let $X_t$ denote a $2s$-stable symmetric L\'evy
process, and let $\tau_B$ be the first time $X$ exits $B$. Recall the
definition of the constant $C_{d,s}$ in Theorem~\ref{KernelComp}, and
define $u(x,y)=(2/\pi)^{2s}C_{d,s}|x-y|^{2H}$.  By the potential theory of
$2s$-symmetric stable processes, (see, for example,
\cite{chen1998estimates}), the function
\[
G^s_B(x,y)=u(x,y)-\mathbb{E}^x[u(X_{\tau_B},y)]
\]
is the $\FGF_s(D)$ covariance kernel. The following explicit formula for
$G_s^B$ is given as Corollary 4 in \cite{blumenthal1961distribution}: 
\begin{equation} \label{eq:green01}
G^s_B(x,y)= \tilde{k}_{s,d} | x-y|^{2\Hu}
\int_0^{\frac{(1-|x|^2)(1-|y|^2)}{|x-y|^2}}(v+1)^{-d/2}v^{s-1}dv, \quad
x,y\in B, 
\end{equation} 
where 
\[
\tilde{k}_{s,d} = \frac{\Gamma(d/2)}{4^{s}\pi^{d/2}\Gamma(s)^2}.
\]

Suppose that $s>1$ is not an integer and $\phi \in \s(\R^d)$. We claim that
$G^s_B(x,y)=\int_{B}G^{\lfloor s\rfloor}(x,u)G^{s-\lfloor
  s\rfloor}(u,y)du$ is the covariance kernel for $\FGF_s(\D)$. Indeed, we
may write $(-\Delta)^s = (-\Delta)^{\lfloor s \rfloor}(-\Delta)^{s-\lfloor
  s \rfloor}$ and calculate 
\begin{align*}
(-\Delta)^s\iint G_B^{s-\lfloor s \rfloor}(x,u) &G_B^{\lfloor s
  \rfloor}(u,y) \phi(y) \,dy \\ &= (-\Delta)^{\lfloor s \rfloor} \int G_B^{\lfloor s
  \rfloor}(u,y) \phi(y) \,dy \\ &= \phi(x),
\end{align*}
which implies that $G_B^s$ is the $\FGF_s(D)$ covariance kernel by
\eqref{eq:different_Green}.

\begin{remark}
  Similar results may be obtained for a more general class of domains
  $D$. The ingredients are the corresponding potential theory of the
  poly-Laplacian and fractional Laplacian for $s\in(0,1)$.
\end{remark}

\section{Projections of the FGF}\label{sec:projection}
\makeatletter{}Given a domain $D\subset \R^d$ and a distribution $f$ defined on
$\R^d\setminus D$, if a distribution $g:\R^d\rightarrow \R$ satisfies the
condition
\begin{align*}
\left.f\right|_{\R^d\setminus D}&=\left.g\right|_{\R^d\setminus D} \\
 \left.((-\La)^s g)\right|_{D} &=0, 
\end{align*}
then we call $g$ the \textbf{\textit{s}-harmonic extension} of $f$. In this
section we decompose $h\sim \FGF_s(\R^d)$ as a sum of two random fields, one
of which is supported on $D$ and the other of which may be interpreted as
the $s$-harmonic extension of the values of $h$ on $\R^d\setminus D$.

Let $s>0$, let $D\subsetneq \R^d$ be an allowable domain, and define
\[
\Har_s(D)=\{f\in \dH^s(\R^d) \,:\, \left.((-\La)^sf)\right|_{D}=0 \}.
\]
\begin{proposition} \label{prop:directsum} 
  $\dH^s(\R^d)=\Har_s(D)\oplus\dH^s_0(D)$.
\end{proposition}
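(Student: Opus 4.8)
The plan is to realize this as the orthogonal splitting of the Hilbert space $\dH^s(\R^d)$ relative to the closed subspace $\dH^s_0(D)$. Since $\dH^s_0(D)$ is, by construction, a closed subspace of $\dH^s(\R^d)$, the Hilbert-space projection theorem immediately gives the orthogonal direct sum $\dH^s(\R^d)=\dH^s_0(D)\oplus\bigl(\dH^s_0(D)\bigr)^{\perp}$. Thus the whole statement reduces to the identification $\bigl(\dH^s_0(D)\bigr)^{\perp}=\Har_s(D)$; in particular $\Har_s(D)\cap\dH^s_0(D)$ will automatically be $\{0\}$, so the sum is genuinely direct.

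To prove $\bigl(\dH^s_0(D)\bigr)^{\perp}=\Har_s(D)$, fix $f\in\dH^s(\R^d)$. Because $s>0$, the mapping properties of the fractional Laplacian recorded in Section~\ref{sec: fractional laplacian} (item (i), with $s_0=s$) give $(-\La)^s f\in\dH^{-s}(\R^d)$ with $\wh{(-\La)^s f}(\xi)=|\xi|^{2s}\wh f(\xi)$; moreover, since $-s-\tfrac d2<0$ we have $\s_{-s-d/2}(\R^d)=\s(\R^d)$, so $\dH^{-s}(\R^d)\subset\s'(\R^d)$ and $(-\La)^s f$ is an honest tempered distribution, making the condition $\bigl((-\La)^s f\bigr)|_D=0$ (that is, $((-\La)^s f,g)=0$ for all $g\in C_c^\infty(D)$) meaningful. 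For any such $g\in C_c^\infty(D)\subset\s(\R^d)\subset\dH^s(\R^d)$, Plancherel's identity yields
\[
(f,g)_{\dot H^s(\R^d)}=\int_{\R^d}|\xi|^{2s}\wh f(\xi)\,\overline{\wh g(\xi)}\,d\xi=\int_{\R^d}\wh{(-\La)^s f}(\xi)\,\overline{\wh g(\xi)}\,d\xi=\bigl((-\La)^s f,\,g\bigr),
\]
the last being the distributional pairing (this is the expected global counterpart of \eqref{eq:switch}, with the operator placed on $f$ rather than $g$). By density of $C_c^\infty(D)$ in $\dH^s_0(D)$ and continuity of the inner product, $f\perp\dH^s_0(D)$ holds if and only if $(f,g)_{\dot H^s(\R^d)}=0$ for every $g\in C_c^\infty(D)$, which by the display holds if and only if $\bigl((-\La)^s f\bigr)|_D=0$, i.e.\ $f\in\Har_s(D)$. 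This gives the desired identity and hence the proposition.

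I do not anticipate a real obstacle here: the argument is the standard orthogonal-complement computation, entirely parallel to the Markov decomposition of the GFF. The only delicate points are bookkeeping: checking $C_c^\infty(D)\subset\dH^s(\R^d)$ (which needs $s\ge0$, so that $|\xi|^{2s}$ is locally integrable near the origin against the square modulus of a Schwartz function), and verifying that $(-\La)^s f$ is a well-defined tempered distribution for $f\in\dH^s(\R^d)$ together with the pairing identity $(f,g)_{\dot H^s(\R^d)}=((-\La)^s f,g)$ — both of which follow from the fractional-Laplacian mapping properties and the embedding $\dH^{-s}(\R^d)\hookrightarrow\s'(\R^d)$ established earlier in the paper.
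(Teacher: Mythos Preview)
Your proof is correct and follows essentially the same approach as the paper: both rest on the identity $(f,g)_{\dot H^s(\R^d)}=((-\Delta)^s f,g)$ for $g\in C_c^\infty(D)$, which shows that $\Har_s(D)$ is exactly the orthogonal complement of $\dH^s_0(D)$. The paper packages the existence of the projection via Riesz representation (and somewhat unnecessarily invokes allowability for this step), while you invoke the Hilbert-space projection theorem directly using that $\dH^s_0(D)$ is closed by construction---but these are the same fact, and your presentation is arguably cleaner.
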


\begin{proof}
If $f\in \Har_s(D)$ and $g\in \dH^s_0(D)$, then $(f,g)_{\dot H^s(\R^d)} =
((-\Delta)^s f,g) = 0$. Therefore, $\Har_s(D)$ and $\dH^s_0(D)$
are orthogonal subspaces of $\dH^s(\R^d)$.  

Let $f\in \dH^s(\R^d)$. Since $D$ is allowable, $((-\La
)^sf,\cdot)_{L^2(\R^d)}$ is a continuous functional on
$\dH^s_0(D)$. Therefore, there exists $f_D\in \dH^s_0(D)$ such that for all
$g\in \dH^s_0(D)$, we have $(f,g)_{\dot H^{s}(\R^d)}=(f_D,g)_{\dot H^s(\R^d)}$. In
particular, this implies that $((-\Delta)^s(f-f_D),g) = 0$ for all $g\in
C_c^\infty(D)$, which means that 
\[
\left.(-\Delta)^s(f-f_D)\right|_D=0.
\] 
Thus we can write $f$ as a sum of elements of $\Har(D)$ and $\dH^s_0(D)$ as
$f=(f-f_D)+f_D$.
\end{proof}
Observe that Proposition~\ref{prop:directsum} implies that $\Har_s(D)$ is a
closed subspace of $\dot H^s(\R^d)$. We define the projection operators
$P_Df=f_D$ and $P_D^{\Har} f = f - f_D$. We will make sense of $P_Dh$ and
$P_D^{\Har} h$ almost surely, although these are defined a priori
only for $h \in \dot H^s(\R^d)$ and not for arbitrary elements of
$\s'_H(\R^d)$.

We begin by observing that the solution $f$ of \eqref{eq:fracLap} is given
by $f = P_D(-\Delta)^{-s} \phi$. Indeed, $P_D(-\Delta)^{-s} \phi\in \dot
H_0^s(\R^d)$, and
\[
(P_D(-\Delta)^{-s}\phi,g)_{\dot H^s(\R^d)} = ((-\Delta)^{-s}\phi,g)_{\dot
  H^s(\R^d)} = (\phi,g)_{L^2(\R^d)}, 
\]
since $(P_D^{\Har}(-\Delta)^{-s}\phi,g) = 0$ for all $g\in
C_c^\infty(D)$. Therefore, we may apply the Bochner-Minlos theorem to the
functional 
\[
\Phi(\phi) = \exp\left(-\frac{1}{2}\|P(-\La)^{-s}\phi\|^2_{\dot H^s(\R^d)}
\right)
\]
for $P=P_D$ and for $P=P_D^{\Har}$ to obtain random tempered distributions
$h_D$ and $h_D^{\Har}$, respectively. We call $h^{\Har}_D$ the $s$-harmonic
extension of $h$ restricted to $\R^d\setminus D$. In
Section~\ref{sec:regularity}, we will show that $h^{\Har}_D$
is smooth in $D$ almost surely.

\begin{remark}\label{rmk:harmonic part}
  Like the fractional Gaussian field in $\R^d$, $h^{\Har}_D$ is a random
  element of $\s'_H(\R^d)$. But $h_D$ is a random element of $\s'(\R^d)$, as
  mentioned in Remark~\ref{rmk:zero boundary FGF}.
\end{remark}

Now sample $h^{\Har}_D$ and $h_D$ independently and define
$h=h^{\Har}_D+h_D$. By the uniqueness part of the Bochner-Minlos theorem,
$h$ is an $\FGF_s(\R^d)$. For all $f\in \dH^s_0(D)$, we have 
$(h,f)_{\dH^s(\R^d)}=(h_D,f)_{\dH^s(\R^d)}$ almost surely. Therefore, $h_D$
is almost surely determined by $h$. Thus $h_D^{\Har} = h - h_D$ is also
almost surely determined by $h$. So we can define measurable maps $P_D$ and
$P^{\Har}_D$ on $\s'_H(\R^d)$ such that $h_D=P^Dh \sim
\FGF_s(\R^d)$ and $h^{\Har}_D=P^{\Har}_Dh$ is the harmonic extension of $h$
restricted to $\R^d \setminus D$.

\begin{remark} \label{rem:cond_exp}
  We will sometimes describe the relationship between $h_D$ and
  $h^{\Har}_D$ by saying that $h^{\Har}_D$ is the conditional expectation
  of $h\sim \FGF_s(\R^d)$ given the values of $h$ on $\R^d\setminus D$.
\end{remark}

Because $(-\La)$ commutes with $P_D$ and $P^{\Har}_D$, by the
Bochner-Minlos theorem, we have
\begin{equation}\label{eq:laplacian_action}
  (-\La )h^s_D\stackrel{d}{=}h^{s-2}_D \quad \text{and} \quad (-\La )h^{D,s}_{\Har }\stackrel{d}{=}h^{D,s-2}_{\Har } 
\end{equation}
where $\stackrel{d}{=}$ denotes equality in distribution. 

Suppose $U\subset D$ is another allowable domain. Since projection
operators in $\dH^s(\R^d)$ commute, 
\[ 
P_UP_Dh=P_DP_Uh=P_Uh, 
\] 
\[  
h_D=P_Dh=P_D(P_U^{\Har}h+P_Uh)=P_U^{\Har}h_D+P_Uh 
\] 
almost surely. Moreover, $P^U_{\Har }h_D$ and $P_Uh$ are
independent. As discussed above, $h_U$ and $P_U^{\Har}h_D$ are
determined by $h_D$ almost surely.  Thus we have the following
proposition.
\begin{proposition}
  Given allowable domains $U$ and $D$ such that $U\subset D$, there is a
  coupling $(h_D,h^{\Har}_{U,D},h_U)$ such that 
  \begin{enumerate}[label=(\roman*)]
  \item $h_D=h^{\Har}_{U,D}+h_U$, 
  \item $h_D$ is a zero boundary $\FGF$ on $D$, 
  \item $h_U$ is zero boundary $\FGF$ on $U$, and 
  \item $h^{\Har}_{U,D}$ and $h_U$ are independent and both determined by
    $h_D$ almost surely. 
  \end{enumerate} 
  We call $h^{\Har}_{U,D}$ the harmonic extension of $h_D$ given
  its values on $D\slash U$.
\end{proposition}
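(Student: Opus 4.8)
The plan is to build the required coupling directly on top of the decomposition $h = h_D + h^{\Har}_D$ of an $\FGF_s(\R^d)$ already constructed in Section~\ref{sec:projection}, by splitting $h_D$ one step further using the inclusion $U\subset D$. Concretely, I would sample $h\sim\FGF_s(\R^d)$ together with $h_D = P_D h$ and $h^{\Har}_D = P^{\Har}_D h$ exactly as in that section, and then set
\[
h_U \colonequals P_U h, \qquad h^{\Har}_{U,D} \colonequals P^{\Har}_U h_D = h_D - h_U,
\]
where $P_U$ and $P^{\Har}_U$ are the orthogonal projections of $\dH^s(\R^d)$ onto $\dH^s_0(U)$ and $\Har_s(U)$ supplied by Proposition~\ref{prop:directsum} applied with $U$ in place of $D$, extended to measurable maps on the relevant distribution space in the same way $P_D$ and $P^{\Har}_D$ were extended in Section~\ref{sec:projection}. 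All three fields are then functions of the single Gaussian object $h$, so the triple is automatically jointly Gaussian.

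The first step is the elementary algebra of projections. Since $\dH^s_0(U)\subset\dH^s_0(D)$, one has $P_D P_U = P_U P_D = P_U$, and subtracting from $P_D$ gives $P_D P^{\Har}_U = P^{\Har}_U P_D = P_D - P_U$; these are the commutation facts quoted just before the statement. Consequently
\[
h_D = P_D h = P_D\bigl(P_U h + P^{\Har}_U h\bigr) = P_U h + P^{\Har}_U P_D h = h_U + h^{\Har}_{U,D},
\]
which is (i). Item (ii) is immediate, since $h_D = P_D h$ is by construction a zero-boundary $\FGF_s(D)$ (Section~\ref{defDomain}, as recalled in Section~\ref{sec:projection}). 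For (iii) I would observe that the argument showing $P_D h\sim\FGF_s(D)$ — pass to the $\FGF_s(\R^d)$ Gaussian Hilbert space, use that $P_U(-\Delta)^{-s}\phi$ solves \eqref{eq:fracLap} with $D$ replaced by $U$, and invoke the uniqueness part of the Bochner--Minlos theorem — applies verbatim with $U$ in place of $D$ to give $h_U = P_U h\sim\FGF_s(U)$.

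For (iv), independence follows from joint Gaussianity together with orthogonality: in the $\FGF_s(\R^d)$ Gaussian Hilbert space the variable $(h^{\Har}_{U,D},\phi_1)$ corresponds to $(P_D - P_U)(-\Delta)^{-s}\phi_1 \in \dH^s_0(D)\cap\Har_s(U)$, while $(h_U,\phi_2)$ corresponds to $P_U(-\Delta)^{-s}\phi_2 \in \dH^s_0(U)$, and these two subspaces are orthogonal by Proposition~\ref{prop:directsum} applied to $U$; hence $\Cov[(h^{\Har}_{U,D},\phi_1),(h_U,\phi_2)] = 0$ for all test functions, which for a jointly Gaussian family yields independence. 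Both fields are determined by $h_D$ because $h_U = P_U h = P_U P_D h = P_U h_D$ and $h^{\Har}_{U,D} = h_D - h_U$ are measurable functions of $h_D$ alone. I expect the one point that is not purely formal to be this last measurability assertion — namely extending $P_U$ (hence $P^{\Har}_U$) from $\dH^s(\R^d)$ to a Borel map on the space of distributions so that $h_U$ and $h^{\Har}_{U,D}$ are genuine measurable functions of $h_D$ — but this is carried out exactly as the corresponding extension of $P_D$ in Section~\ref{sec:projection}; one also notes in passing that, unlike $h^{\Har}_D$, the field $h^{\Har}_{U,D} = h_D - h_U$ is an honest random tempered distribution, being a difference of two such (cf. Remark~\ref{rmk:zero boundary FGF}).
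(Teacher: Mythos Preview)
Your proposal is correct and follows essentially the same approach as the paper: both start from $h\sim\FGF_s(\R^d)$ and its projections, use the commutation $P_UP_D = P_DP_U = P_U$ coming from $\dH^s_0(U)\subset\dH^s_0(D)$ to write $h_D = P^{\Har}_U h_D + P_U h$, and read off independence and measurability from the orthogonality and projection structure already set up in Section~\ref{sec:projection}. Your write-up is in fact more explicit than the paper's, which compresses the entire argument into a few lines immediately preceding the proposition statement.
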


By the definition of $h^{\Har }_D$, given $\phi\in C_c^\infty(D) \cap
\s_H(\R^d)$ and $f=(-\La )^{-s}\phi\in \dH^s(\R^d)$, we have $(h^{\Har}_D,
\phi)=(h,(-\La )^sf^{\Har}_D)$. Since $\mathrm{supp}((-\La
)^sf^{\Har}_D))\subset \R^d \setminus D$, we can say that the value of
$h_{\Har }^D$ on $D$ modulo a polynomial of degree at most $\lfloor
H\rfloor$ is determined by values of $h$ on $\R^d \setminus D$. More
precisely, the random variable $\left.h^{\Har}_D \right|_D$ is determined
by $\{(h,\phi) \,: \, \phi\in T_s(\R^d), \mathrm{supp}(\phi)\subset \R^d
\setminus D\}$.

When $s$ is a positive integer, the operator $(-\Delta)^s$ is local, in
that case we have a stronger result: $\left.h^{\Har}_D \right|_D$ is
measurable with respect to the $\sigma$-algebra generated by the
intersection of the value of $h$ on every neighborhood of the boundary
(that is, the action of $h$ on test functions supported on a neighborhood
of the boundary).  This is a generalization of the corresponding Markov
property for the Gaussian free field \cite{sheffield2007gaussian}.

\section{Fractional Brownian motion and the FGF}\label{sec:FBM}
\makeatletter{}

The $d$-dimensional fractional Brownian motion $B$ with Hurst parameter
$H>0$ is defined to be the centered Gaussian process on $\R^d$ with
\begin{equation} \label{eq:FBF} 
\E[B(x)B(y)] = |x-y|^{2H} - |x|^{2H} - |y|^{2H} \quad \text{for all }x,y\in \R^d. 
\end{equation} 
The existence of such a process is guaranteed by the general theory of
Gaussian processes (for example, see Theorem 12.1.3 in
\cite{dudley2002real}), because the right-hand side of \eqref{eq:FBF}
  is positive definite \cite{ossiander1989certain}. The special case
$H=\frac{1}{2}$ is called L\'evy Brownian motion \cite{levy1940mouvement},
\cite{levy1945mouvement}.

\begin{proposition} \label{prop:FBFFGF} 
  If $s \in (d/2,d/2+1)$ (that is, $H\in (0,1)$) and $h\sim\FGF_s(\R^d)$,
  then the process defined by $\wt{h}(x) = (h,\delta_x-\delta_0)$ has the
  same distribution as the fractional Brownian motion with Hurst parameter
  $H$ (up to multiplicative constant).
\end{proposition}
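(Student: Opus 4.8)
The plan is to use that both $\wt h$ and any fixed scalar multiple of fractional Brownian motion are centered Gaussian processes on $\R^d$, so that it suffices to compute $\Cov(\wt h(x),\wt h(y))$ and recognize it as a positive multiple of the fractional Brownian covariance. The first, and really the only delicate, task is to make rigorous sense of $\wt h(x) = (h,\delta_x - \delta_0)$, i.e.\ to verify that $\delta_x - \delta_0$ lies in the space $T_s(\R^d)$ of admissible test objects (the closure of $\s_H(\R^d)$ in $\dot H^{-s}(\R^d)$) on which the $\FGF_s(\R^d)$ Gaussian Hilbert space of Section~\ref{Rd} is built. Once this is done, $\wt h(x)$ is automatically a centered Gaussian with
\[
\Cov\bigl(\wt h(x),\wt h(y)\bigr) = \bigl(\delta_x - \delta_0,\ \delta_y - \delta_0\bigr)_{\dot H^{-s}(\R^d)} .
\]

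For the admissibility step I would work on the Fourier side. Since $H = s - d/2 \in (0,1)$ we have $\lfloor H\rfloor = 0$, hence $\s_H(\R^d) = \s_0(\R^d) = \{\phi \in \s(\R^d) : \int_{\R^d}\phi = 0\}$. The Fourier transform of $\delta_x - \delta_0$ is $\xi \mapsto (2\pi)^{-d/2}(e^{-i\xi\cdot x} - 1)$, and the crucial observation is that
\[
\int_{\R^d} \bigl| e^{-i\xi\cdot x} - 1 \bigr|^2 |\xi|^{-2s}\,d\xi < \infty
\]
exactly when $d/2 < s < d/2 + 1$: near the origin the integrand is $O(|\xi|^{2-2s})$, locally integrable iff $2 - 2s > -d$, i.e.\ $s < d/2+1$; at infinity it is $O(|\xi|^{-2s})$, integrable iff $2s > d$, i.e.\ $s > d/2$. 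Thus $\delta_x - \delta_0 \in \dot H^{-s}(\R^d)$. Mollifying against a compactly supported approximate identity produces functions in $C_c^\infty(\R^d) \cap \s_0(\R^d)$ converging to $\delta_x - \delta_0$ in $\dot H^{-s}(\R^d)$ (dominated convergence in the displayed integral), so $\delta_x - \delta_0 \in T_s(\R^d)$, as required.

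Next I would evaluate the inner product using Theorem~\ref{KernelComp}. Because $H \in (0,1)$ is not a nonnegative integer, part~\ref{itm:(i)} applies and gives $(\phi_1,\phi_2)_{\dot H^{-s}(\R^d)} = C(s,d)\iint_{\R^d\times\R^d}|u-v|^{2H}\phi_1(u)\phi_2(v)\,du\,dv$ for $\phi_1,\phi_2 \in C_c^\infty(\R^d)\cap T_s(\R^d)$, in particular for the mollifications of $\delta_x - \delta_0$ and $\delta_y - \delta_0$ from the previous step. Passing to the limit is legitimate: the left-hand side converges to $(\delta_x-\delta_0,\delta_y-\delta_0)_{\dot H^{-s}(\R^d)}$, and on the right-hand side $(u,v)\mapsto |u-v|^{2H}$ is continuous and locally bounded on all of $\R^d\times\R^d$ (here $2H>0$ is used), so the double integral against the compactly supported mollified data converges to $|x-y|^{2H} - |x|^{2H} - |y|^{2H}$. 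Therefore
\[
\Cov\bigl(\wt h(x),\wt h(y)\bigr) = C(s,d)\bigl(|x-y|^{2H} - |x|^{2H} - |y|^{2H}\bigr).
\]
On the range $d/2 < s < d/2+1$ one has $d/2 - s \in (-1,0)$, so $\Gamma(d/2-s) < 0 < \Gamma(s)$ and hence $C(s,d) < 0$; consequently $\lambda \colonequals \sqrt{-2\,C(s,d)}$ is a positive real number and the right-hand side equals $\lambda^2$ times the fractional Brownian motion covariance $\tfrac12\bigl(|x|^{2H}+|y|^{2H}-|x-y|^{2H}\bigr)$. Since two centered Gaussian processes with the same covariance have the same law, $\wt h \stackrel{d}{=} \lambda B^H$, which is the assertion.

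The step I expect to be the main obstacle is the admissibility verification: recognizing that $s \in (d/2, d/2+1)$ is precisely the range in which $\delta_x - \delta_0$ belongs to $\dot H^{-s}(\R^d)$ (the integral above fails at infinity as $s \downarrow d/2$, the log-correlated threshold, and at the origin as $s \uparrow d/2 + 1$) and checking that $\delta_x - \delta_0$ can be approximated within $\s_H(\R^d)$ so that $\wt h(x)$ is genuinely an element of the $\FGF_s$ Gaussian Hilbert space of Section~\ref{Rd}. Everything after that is a direct application of part~\ref{itm:(i)} of Theorem~\ref{KernelComp} together with the elementary fact that a Gaussian law is determined by its covariance function.
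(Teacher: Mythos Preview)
Your proof is correct and follows essentially the same route as the paper's: verify $\delta_x-\delta_0\in T_s(\R^d)$ via the Fourier-side $\dot H^{-s}$ norm, then read off the covariance from Theorem~\ref{KernelComp}\ref{itm:(i)} and conclude by equality of Gaussian laws with equal covariance. Your treatment is in fact more careful than the paper's on two points: you explicitly justify membership in $T_s(\R^d)$ (not merely in $\dot H^{-s}(\R^d)$) by a mollification argument, and you track the sign of $C(s,d)$ to exhibit the positive multiplicative constant.
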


\begin{proof} 
  Let $x\in \R^d$. Since the Fourier transform of $\delta_x$ is $\xi\mapsto
  e^{2\pi i x\cdot \xi}$, one may verify from the definition of the $\dot
  H^{-s}(\R^d)$ norm that $\delta_x - \delta_0$ is an element of $\dot
  H^{-s}(\R^d)$ and therefore an element of $T_s(\R^d)$. So if $h\sim
  \FGF_s(\R^d)$, then we may define $\wt{h}(x) =
  (h,\delta_x-\delta_0)$. Then by Theorem~\ref{KernelComp}(i) we have
\begin{equation} \label{eq:FBFFGF}
\E[\wt{h}(x)\wt{h}(y)]=G^s(x,y)-G^s(0,y)-G^s(x,0), 
\end{equation} 
where $G^s(x,y) = C(s,d)|x-y|^{2H}$. Combining \eqref{eq:FBF} and
\eqref{eq:FBFFGF}, we see that $C(s,d)B$ and $\wt{h}$ have the same
covariance structure. Since both are centered Gaussian processes, this
implies that they have the same law.
\end{proof} 

Since \eqref{eq:FBF} and \eqref{eq:FBFFGF} show that $\wt{h}(0) = B(0) = 0$
almost surely, Proposition~\ref{prop:FBFFGF} establishes that the
$\FGF_s(\R^d)$ can be identified as (a constant multiple of) the fractional
Brownian motion by fixing its value to be zero at the origin.

Denote by $C^{k,\alpha}(\R^d)$ the space of functions on $\R^d$ all of
whose derivatives of order up to $k$ exist and are $\alpha$-H\"older
continuous.\label{not:holder} Note that the differentiability and H\"older continuity of a
function-modulo-polynomials is well-defined, because adding a polynomial to
a function does not affect its regularity properties. 

\begin{proposition} \label{prop:reg} 
  Let $h$ be an FGF on $\R^d$ with Hurst parameter $H>0$, and define
  $k=\lceil H\rceil-1$. Then $h\in C^{k,\alpha}(\R^d)$ almost surely for
  all $0<\alpha<H-\lceil H\rceil$.
\end{proposition}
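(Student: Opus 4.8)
The plan is to reduce the proposition to the Kolmogorov--Chentsov continuity criterion applied to the order-$k$ partial derivatives of $h$, exploiting that the Gaussian law supplies all the needed moment bounds for free. Set $k \colonequals \lceil H \rceil - 1$ (so $k = \lfloor H\rfloor$ when $H\notin\Z$ and $k = H - 1$ when $H\in\Z$) and $\gamma \colonequals H - k \in (0,1]$; this $\gamma$ is the Hölder threshold we establish, i.e.\ we prove $h \in C^{k,\alpha}(\R^d)$ a.s.\ for every $\alpha < \gamma$. Working in the $\FGF_s(\R^d)$ Gaussian Hilbert space, I would first check --- from the $\dH^{-s}(\R^d)$ norm, exactly as $\delta_x - \delta_0$ is handled in the proof of Proposition~\ref{prop:FBFFGF} --- that for each multi-index $\beta$ with $|\beta| = k$ the distribution $\partial^\beta(\delta_x - \delta_y)$ lies in $T_s(\R^d)$ when $H\notin\Z$; then $\partial^\beta h(x) - \partial^\beta h(y) \colonequals (h,\partial^\beta(\delta_x - \delta_y))$ is a well-defined centered Gaussian whose variance, by translation invariance of the law of $h$ modulo polynomials of degree $\le k$, depends only on $z = x - y$. (When $H\in\Z$ this pairing has a logarithmic divergence at the origin, reflecting that $\partial^\beta h$ is then only defined modulo linear functions; there I would instead use the second difference $\partial^\beta(\delta_{x+z} - 2\delta_x + \delta_{x-z})$, which does lie in $T_s(\R^d)$.)

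The crux is bounding the structure function $\sigma_\beta^2(z) \colonequals \Var\bigl(\partial^\beta h(z) - \partial^\beta h(0)\bigr)$. From the spectral representation, using $|\xi^\beta|^2 \le |\xi|^{2|\beta|}$ and $2|\beta| - 2s = -2\gamma - d$, one has $\sigma_\beta^2(z) = 2\int_{\R^d} |\xi^\beta|^2\,(1-\cos(\xi\cdot z))\,|\xi|^{-2s}\,d\xi \le 2\int_{\R^d}(1 - \cos(\xi\cdot z))\,|\xi|^{-2\gamma - d}\,d\xi$, and splitting this integral at $|\xi| = 1/|z|$ --- bounding $1 - \cos$ by $\tfrac12|\xi\cdot z|^2$ on the low-frequency part and by $2$ on the high-frequency part --- gives $\sigma_\beta^2(z) \le C\,|z|^{2\gamma}$ when $\gamma < 1$, and $\le C\,|z|^{2}$ for the second-difference version when $\gamma = 1$; these bounds also follow from the covariance kernel of Theorem~\ref{KernelComp}, which contributes $|x-y|^{2\gamma}$ for non-integer $H$ and $|x-y|^{2}\log(1/|x-y|)$ for integer $H$. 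Because the increment (resp.\ second increment) is Gaussian, Wick's formula gives $\E\,\bigl|\partial^\beta h(x) - \partial^\beta h(y)\bigr|^{2m} \le C_m\,|x-y|^{2m\gamma}$ for all $m \ge 1$. The Kolmogorov--Chentsov theorem then furnishes, for $m$ large, a modification of $\partial^\beta h$ that is locally $\alpha$-Hölder for every $\alpha < \gamma - d/(2m)$; taking $m\to\infty$ and intersecting over a countable sequence of moments produces a single modification that is a.s.\ locally $\alpha$-Hölder for all $\alpha < \gamma$. For integer $H$ one uses the analogous criterion for second differences, which places $\partial^\beta h$ in the Zygmund class and hence in $\bigcap_{\alpha < 1} C^{0,\alpha}$.

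Finally I would reassemble $h$ from its order-$k$ derivatives. Mollifying, $h_\eps \colonequals h * \rho_\eps$ is a.s.\ smooth, and $\partial^\beta h_\eps = (\partial^\beta h)*\rho_\eps$ converges locally uniformly to the continuous version above for each $|\beta| = k$; by descending induction on $|\alpha|$ from $k$ down to $0$ each $\partial^\alpha h_\eps$ then converges locally uniformly together with its gradient, so $h_\eps$ converges locally uniformly to a random $C^k$ function which, since $h_\eps \to h$ in $\s'(\R^d)$, coincides with $h$ modulo a polynomial of degree $\le k$ and whose order-$k$ derivatives are $\alpha$-Hölder; this is the asserted conclusion. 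I expect the main obstacle to be the bookkeeping forced by defining $h$ only up to polynomials: one must confirm that differentiating $k$ times lands in a setting where the first increments (or, when $H$ is an integer, the second increments) are genuinely well-defined, stationary, and carry the structure functions above, and that the mollification reconstruction is compatible with the quotient. A secondary point is that $\partial^\beta h$ is not itself an isotropic FGF, so its regularity cannot simply be quoted from the one-dimensional theory of fractional Brownian motion --- it has to be read off the anisotropic spectral density (or, as noted, off the explicit kernel of Theorem~\ref{KernelComp}).
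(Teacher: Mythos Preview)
Your argument is correct and takes a genuinely different route from the paper. The paper proceeds by cases on $H$: for $H\in(0,1)$ it invokes the identification with fractional Brownian motion (Proposition~\ref{prop:FBFFGF}) and cites a known sample-path result for FBM; for $H\in(1,2]$ it applies a fractional power of the Laplacian to drop the Hurst parameter into $(0,1)$ and then appeals to Silvestre's elliptic regularity theory \cite{silvestre2007regularity} to lift the H\"older estimate back; for $H>2$ it inducts using the classical Schauder-type fact that $\Delta f\in C^{k,\alpha}$ implies $f\in C^{k+2,\alpha}$. You instead work directly with the spectral representation, bounding the increment variance of the order-$k$ partial derivatives and feeding Gaussian moment bounds into Kolmogorov--Chentsov, with a second-difference variant at integer $H$. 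Your route is more self-contained---it needs no black-box input from FBM path theory or PDE regularity---and treats all $H$ essentially uniformly; the paper's route is shorter on the page because each step is a citation. One point to watch in your write-up is the integer-$H$ case: the passage from the second-increment bound $\E\bigl|\partial^\beta h(x+z)-2\partial^\beta h(x)+\partial^\beta h(x-z)\bigr|^{2m}\le C_m|z|^{2m}$ to a locally $C^{0,\alpha}$ version for every $\alpha<1$ is correct (it places $\partial^\beta h$, modulo affine functions, in the Zygmund class $\Lambda_1\subset\bigcap_{\alpha<1}C^{0,\alpha}_{\mathrm{loc}}$), but the relevant continuity criterion for second differences is less standard than Kolmogorov--Chentsov and deserves either a reference or a short dyadic argument.
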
 

\begin{proof} We consider several cases: 

  (i) Suppose that $0<H<1$. By Theorem 8.3.2 in \cite{adler2010geometry},
  fractional Brownian motion is $\alpha$-H\"older continuous for all
  $\alpha < H$. The result then follows from Proposition \ref{prop:FBFFGF}.

  (ii) Suppose that $1<H<2$, and let $s=d/2+H$. As in the case $H\in
  (0,1)$, it is straightforward to verify that $\partial^\alpha \delta_x
  - \partial^\alpha \delta_0\in T_s(\R^d)$ when $|\alpha|\leq 1$ and $x\in
  \R^d$. Therefore, if $h\sim \FGF_s(\R^d)$, we may fix all derivatives of
  $h$ of order up to 1 to vanish at the origin. In this way we obtain a
  scale-invariant function $h_0$ whose restriction to $\s_1(\R^d)$
  coincides with $h$. Since $|h_0(x)|$ has the same law as $|x|^H h_0(1)$ by
  scale invariance, we have $\mathbb{E}|h_0(x)|=c|x|^H$ for all $x \in \R^d$,
  where $c = \E[|h_0(1)|]$. Thus
  \[ 
  \mathbb{E}\left[\int_{|x|>1}
    \frac{|h_0(x)|}{|x|^{d+2}}\,dx\right]=\int_{|x|>1}
  \frac{\mathbb{E}|h_0(x)|}{|x|^{d+2}} \,dx = \int_{|x|>1}
  \frac{c}{|x|^{d+2-H}} <\infty,
  \] 
  which implies that $h_0$ satisfies condition \eqref{eq:finite_integral}
  with $s={1/2}$ almost surely (see Section~\ref{sec: fractional
    laplacian}). Therefore, $\wt{h} \colonequals (-\Delta)^{1/2}h_0$ is
  well-defined as a random element of $\s_0'(\R^d)$. Furthermore, since
  \begin{align*}
  (\wt{h},\phi) = (h,(-\Delta)^{1/2}\phi) &\sim
  \mathcal{N}\left(0,\|(-\Delta)^{1/2}\phi\|_{\dot H^{-s}(\R^d)}\right) \\
  &= \mathcal{N}\left(0,\|\phi\|_{\dot H^{-(s-1)}(\R^d)}\right)
  \end{align*}
  for all $\phi \in \s_0(\R^d)$, we see that $\wt{h}\sim
  \FGF_{s-1}(\R^d)$. Thus $\wt{h}$ is $\alpha$-H\"older continuous for all
  $\alpha < s-1$ by the preceding case. By the proof\footnote{Proposition
    2.8 in \cite{silvestre2007regularity} includes a boundedness hypothesis
    which does not hold here. However, that hypothesis is only used for a
    norm bound also given in the proposition statement. The regularity
    assertion follows from the other hypotheses.}  of
  \cite[Proposition~2.8]{silvestre2007regularity}, $h$ is almost surely in
  $C^{1,\alpha}(\R^d)$.
  
  (iii) If $H=1$, we may apply the same argument with
  $(-\Delta)^{(1-\alpha)/2}$ in place of $(-\Delta)^{1/2}$, which means
  that $\wt{h}\sim \FGF_{(1+\alpha)/2}(\R^d)$.

  (iv) If $H=2$, then we may apply the same reasoning we applied in case
  (ii), leveraging the $H=1$ case. 

  (v) For $H>2$, we note that $f \in C^{k+2,\alpha}(\R^d)$ whenever $\Delta
  f \in C^{k,\alpha}(\R^d)$
  \cite[Theorem~2.28]{folland1999real}. Therefore, the result follows from
  the case $H\in (0,2]$ by induction.
\end{proof}

As an application of the ideas presented in this section, we construct a
coupling of all the fractional Gaussian fields on $\R^d$.

\begin{proposition} \label{prop:FGF_coupling} There exists a coupling of the
  random fields $\{h_s\,:\,s\in \R\}$ such that $h_s\sim \FGF_s(\R^d)$ and
  $h_s = (-\Delta)^{\frac{s'-s}{2}}h_{s'}$ for all $s,s'\in
  \R$. Furthermore, in this coupling $h_s$ determines $h_{s'}$ for all
  $s,s' \in \R$.
\end{proposition}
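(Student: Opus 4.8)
The plan is to realize every member of the family as a deterministic functional of a single white noise, so that the relations $h_s=(-\Delta)^{(s'-s)/2}h_{s'}$ become instances of the identity $(-\Delta)^{a}(-\Delta)^{b}=(-\Delta)^{a+b}$. Fix a white noise $W$ on $\R^d$ as constructed in Section~\ref{subsec:white_noise}, a random element of $\s'(\R^d)$, and put $h_0\colonequals W$. For $s\in\R$, write $H=s-d/2$ and $k\colonequals\max(\lfloor H\rfloor,-1)$, so that $\s_k(\R^d)=\s_H(\R^d)$; by Proposition~\ref{prop:fraclapdecay} the operator $(-\Delta)^{-s/2}$ maps $\s_k(\R^d)$ continuously into $\mathcal{U}_{-s/2+(k+1)/2}(\R^d)$ (a check shows $-s/2+(k+1)/2>-d/2$ for every $s$ and $d\geq1$, so this space is defined). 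I would then set $h_s\colonequals(-\Delta)^{-s/2}W$, interpreted pathwise via the distributional fractional Laplacian of Section~\ref{sec: fractional laplacian}, the point being that a sample of $W$ lies, almost surely and simultaneously over all $s$, in the topological dual of $(-\Delta)^{-s/2}\s_k(\R^d)\subset\mathcal{U}_{-s/2+(k+1)/2}(\R^d)$. On that common full-measure event each $h_s$ is a genuine random element of $\s_H'(\R^d)$ and, because $(-\Delta)^{a}$ and $(-\Delta)^{b}$ commute and compose additively,
\[
(-\Delta)^{\frac{s'-s}{2}}h_{s'}=(-\Delta)^{\frac{s'-s}{2}}(-\Delta)^{-\frac{s'}{2}}W=(-\Delta)^{-\frac{s}{2}}W=h_s
\]
for all $s,s'$ at once; rewriting this as $h_{s'}=(-\Delta)^{\frac{s-s'}{2}}h_s$ shows that $h_{s'}$ is a measurable function of $h_s$, which is the final assertion.

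To see that $h_s\sim\FGF_s(\R^d)$, note that for $\phi\in\s_H(\R^d)$ one has $\|(-\Delta)^{-s/2}\phi\|_{L^2(\R^d)}=\|\phi\|_{\dot H^{-s}(\R^d)}<\infty$, so that $(h_s,\phi)=(W,(-\Delta)^{-s/2}\phi)$ coincides with the value supplied by the white-noise Gaussian Hilbert space of Section~\ref{subsec:white_noise} (the pathwise pairing and the Gaussian-Hilbert-space pairing agree on such test functions). Hence $(h_s,\phi)$ is centered Gaussian with variance $\|\phi\|_{\dot H^{-s}(\R^d)}^2$; comparing with \eqref{eq:FGFdef} and applying the uniqueness part of the Bochner--Minlos theorem (Theorem~\ref{minlos}) to the restriction of $h_s$ to $\s_H(\R^d)$ identifies its law as $\FGF_s(\R^d)$.

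The genuinely technical step is the pathwise domain claim: that $W$ is, almost surely, continuous in the $\mathcal{U}_t(\R^d)$-topology for the relevant range of $t$. I would prove this by writing a sample of $W$ as $(1-\Delta)^{m}g$ with $m>d/4$ and $g$ almost surely continuous and of growth slower than any positive power of $|x|$ --- both of which can be read off from the Hermite-expansion construction of $W$ in Section~\ref{subsec:white_noise} --- and then observing that $\mathcal{U}_t(\R^d)$ is preserved by $(1-\Delta)^{m}$ while $\int_{\R^d}|g(x)|(1+|x|^{d+2t})^{-1}\,dx<\infty$ almost surely, which is exactly the integrability criterion \eqref{eq:finite_integral}. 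A more elementary route, bypassing the pathwise fractional Laplacian, is to construct the coupling first along a countable dense set of parameters and a countable dense set of test functions, deduce the identities almost surely for all pairs of parameters in that set by the continuous-extension argument at the end of Section~\ref{subsec:tempdist}, and then extend to arbitrary $s$ using that $s\mapsto(-\Delta)^{-s/2}\phi$ is $L^2(\R^d)$-continuous for $\phi$ in the (slowly varying with $s$) admissible test spaces; this works but is more delicate precisely because that test space changes with $s$.
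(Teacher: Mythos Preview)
Your primary route has a genuine gap in the pathwise claim. Write $\tau=-s/2+(k+1)/2$. Elements of $\mathcal{U}_\tau(\R^d)$ decay only like $(1+|x|^{d+2\tau})^{-1}$, and your integrability criterion for the continuous stationary field $g=(1-\Delta)^{-m}W$ reads $\int_{\R^d}|g(x)|(1+|x|^{d+2\tau})^{-1}\,dx<\infty$. But $g$ is stationary with $\E|g(0)|>0$, so by averaging over dyadic annuli this integral diverges almost surely whenever $\tau\le0$. And $\tau\le0$ is the generic situation for $s>0$: for every $s\in(0,d/2)$ one has $k=-1$ and $\tau=-s/2<0$, and a short computation shows that for $d\ge2$ one has $\tau\le0$ for all $s\ge0$. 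Concretely, $(-\Delta)^{-s/2}\phi$ for generic $\phi\in\s(\R^d)$ decays only like $|x|^{s-d}$, and white noise---being stationary in law---cannot be paired pathwise with functions of such slow decay. The Gaussian Hilbert space pairing $(W,(-\Delta)^{-s/2}\phi)$ is of course defined (since $(-\Delta)^{-s/2}\phi\in L^2$), but only as an $L^2(\Omega)$ limit, not pathwise; that distinction is exactly what the proposition is meant to overcome.

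The paper's proof avoids this by starting from the opposite end of the scale. It takes $h_{d/2+2}\sim\FGF_{d/2+2}(\R^d)$, an FGF with Hurst parameter $2$, pins enough derivatives at the origin to obtain a scale-invariant random function $h_0$ with $\E|h_0(x)|=c|x|^2$, and then applies \emph{positive} fractional powers $(-\Delta)^{1-s'/2}$ for $s'\in(0,2)$ to reach Hurst parameters in $(0,2)$; the remaining values of $s$ are obtained by integer powers of $(-\Delta)$. The integrability condition \eqref{eq:finite_integral} now concerns the polynomial growth of $h_0$ at infinity rather than the (nonexistent) decay of white noise, and is easy to verify. The moral: to get a pathwise coupling, differentiate a smooth FGF rather than anti-differentiate white noise. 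Your alternative route through countable parameters and test functions might be salvageable, but as you yourself note the $s$-dependence of the test space makes it delicate, and you would still have to establish the \emph{pathwise} determination $h_s\mapsto h_{s'}$, which is not merely an $L^2(\Omega)$ statement.
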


\begin{proof}
  We will start with an FGF with Hurst parameter 2 and apply the fractional
  Laplacian to obtain FGFs with Hurst parameters in $(0,2)$. The remaining
  FGFs are then obtained by applying integer powers of the Laplacian to
  FGFs with Hurst parameter in $(0,2]$.
  
  Let $h_{2+d/2}\sim\FGF_{2+d/2}(\R^d)$.  As discussed in the proof
  of Proposition~\ref{prop:reg} case (ii), we can fix the values and
  first-order derivatives of $h$ to vanish at the origin to obtain a
  scale-invariant random function $h_0$ whose restriction to $\s_1(\R^d)$
  agrees with $h$. Furthermore, we have
  \[
  \mathbb{E}\left[\int_{|x|>1}
    \frac{|h_0(x)|}{|x|^{d+2s+k+1}}\,dx\right]=\int_{|x|>1}
  \frac{\mathbb{E}|h_0(x)|}{|x|^{d+2s+k+1}} \,dx = \int_{|x|>1}
  \frac{c|x|^2}{|x|^{d+2s+k+1}} <\infty,
  \] 
  whenever $s\in (0,1/2]$ and $k=1$ or when $s\in (1/2,1)$ and
  $k=0$. Therefore, we may define $h_{s'+d/2}=(-\Delta)^{1-s'/2}h_{2+d/2}$
  for all $s'\in (0,2)$. If $s'+d/2\in \R\setminus (0,2]$, define
  $h_{s'+d/2}=(-\Delta)^{\frac{s-s'}{2}}h_{s+d/2}$, where $s$ is the unique
  real number in $(0,2]$ for which $s-s'$ is an even integer.  
    
  It follows from the construction that $h_s\sim \FGF_s(\R^d)$ for all
  $s\in \R$ and that $h_s = (-\Delta)^{\frac{s'-s}{2}}h_{s'}$ for all
  $s,s'\in \R$, which in turn implies that $h_s$ determines $h_{s'}$ for
  all $s,s'\in \R$.
\end{proof}

\section{Restricting FGFs}\label{sec:restriction}
\makeatletter{}In this section we study how fractional Gaussian fields behave when
restricted to a lower dimensional subspace. 

We regard $\R^{d-1}$ as a subspace of $\R^d$ by associating
$(x_1,\ldots,x_{d-1}) \in \R^{d-1}$ with $(x_1,\ldots,x_{d-1},0)\in \R^d$.
For all $\phi \in \s_H(\R^{d-1})$, we define $\phi^{\uparrow} \in
\s'(\R^d)$ by
\[
(\phi^\uparrow,f)\colonequals \int_{\R^{d-1}} f(x)\phi(x) \, dx
\] 
for all $f\in \mathcal{S}(\R^d)$.

\begin{theorem}\label{thm: restriction}
  Fix $s>\frac{1}{2}$, suppose $h^d\sim \FGF_s(\R^d)$. Then
  $\phi^\uparrow\in T_s(\R^d)$ for all $\phi \in \s_H(\R^{d-1})$, which
  means that $(h,\phi^\uparrow)$ is a well-defined random variable almost
  surely (see Section~\ref{Rd}). Moreover, $h^d$ almost surely determines a
  random distribution $h^{d-1}\sim \FGF_{s-1/2}(\R^{d-1})$ such that for
  all $\phi \in C^{\infty}_c(\R^{d-1})\cap \s_H(\R^d)$ fixed, the relation
  \begin{equation}\label{eq:restriction}
    (h^d,\phi^\uparrow)=C (h^{d-1},\phi), 
  \end{equation}
  holds almost surely, where $C$ is a constant depending only on $d$ and
  $s$.
\end{theorem}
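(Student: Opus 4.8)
The plan is to carry out everything on the Fourier side, where the $\dH^{-s}$ inner product is diagonal, reducing the whole matter to a single radial integral. First I would compute the Fourier transform of $\phi^\uparrow$: since $\phi^\uparrow$ is $\phi$ in the first $d-1$ coordinates tensored with a Dirac mass in the last coordinate, unwinding $(\wh{\phi^\uparrow},f)=(\phi^\uparrow,\wh f)$ shows that $\wh{\phi^\uparrow}(\xi',\xi_d)=(2\pi)^{-1/2}\wh\phi(\xi')$ --- the $(d-1)$-dimensional transform of $\phi$, independent of $\xi_d$. Then by Fubini,
\[
\|\phi^\uparrow\|_{\dH^{-s}(\R^d)}^2=\frac{1}{2\pi}\int_{\R^{d-1}}|\wh\phi(\xi')|^2\left(\int_\R(|\xi'|^2+\xi_d^2)^{-s}\,d\xi_d\right)d\xi',
\]
and the substitution $\xi_d=|\xi'|u$ converts the inner integral to $|\xi'|^{1-2s}\int_\R(1+u^2)^{-s}\,du$. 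The one-dimensional integral $\int_\R(1+u^2)^{-s}\,du=\sqrt\pi\,\Gamma(s-\tfrac{1}{2})/\Gamma(s)$ is finite exactly when $s>\tfrac12$ --- this is precisely where the hypothesis enters --- so I obtain $\|\phi^\uparrow\|_{\dH^{-s}(\R^d)}^2=C^2\|\phi\|_{\dH^{-(s-1/2)}(\R^{d-1})}^2$ with $C^2=\Gamma(s-\tfrac{1}{2})/(2\sqrt\pi\,\Gamma(s))$ (in particular $C$ turns out to depend only on $s$). The index bookkeeping is consistent because the Hurst parameter of $\FGF_{s-1/2}(\R^{d-1})$ is $(s-\tfrac{1}{2})-\tfrac{d-1}{2}=s-\tfrac{d}{2}=H$, so $\s_H(\R^{d-1})$ is the natural test-function class at both ends.

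Next I would upgrade this to the membership $\phi^\uparrow\in T_s(\R^d)$, which is not immediate from finiteness of the norm since $T_s(\R^d)$ was defined as the \emph{closure} of $\s_H(\R^d)$ in $\dH^{-s}(\R^d)$. The plan is to mollify the Dirac mass in the normal direction: choose $\rho\in C_c^\infty(\R)$ with $\int\rho=1$ and set $\Phi_\eps(x)=\phi(x')\rho_\eps(x_d)$. Because $\wh\phi$ has vanishing derivatives up to order $\lfloor H\rfloor$ at the origin, so does $\wh{\Phi_\eps}(\xi)=c\,\wh\phi(\xi')\wh\rho(\eps\xi_d)$, whence $\Phi_\eps\in\s_H(\R^d)$; and $\wh{\Phi_\eps}\to\wh{\phi^\uparrow}$ pointwise with $|\wh{\Phi_\eps}|$ dominated by a fixed multiple of $|\wh\phi(\xi')|$, so dominated convergence against the finite measure $|\xi|^{-2s}|\wh\phi(\xi')|^2\,d\xi$ (finite by the previous step) gives $\Phi_\eps\to\phi^\uparrow$ in $\dH^{-s}(\R^d)$. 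Hence $\phi^\uparrow\in T_s(\R^d)$, and by the Gaussian Hilbert space construction of Section~\ref{Rd} the variable $(h^d,\phi^\uparrow)$ is well defined and centered Gaussian with variance $C^2\|\phi\|_{\dH^{-(s-1/2)}(\R^{d-1})}^2$.

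Then I would build $h^{d-1}$. By the norm identity and polarization, $\phi\mapsto C^{-1}\phi^\uparrow$ is an isometric embedding of $\s_H(\R^{d-1})$ (with the $\dH^{-(s-1/2)}$ inner product) into $T_s(\R^d)$, which extends to an isometry $T_{s-1/2}(\R^{d-1})\hookrightarrow T_s(\R^d)$. Fixing a countable $\{\phi_n\}\subset C_c^\infty(\R^{d-1})\cap\s_H(\R^{d-1})$ dense in $\dH^{-(s-1/2)}(\R^{d-1})$ and setting $(h^{d-1},\phi_n):=C^{-1}(h^d,\phi_n^\uparrow)$, polarization of the variance formula gives $\Cov[(h^{d-1},\phi_i),(h^{d-1},\phi_j)]=(\phi_i,\phi_j)_{\dH^{-(s-1/2)}(\R^{d-1})}$, the $\FGF_{s-1/2}(\R^{d-1})$ covariance; thus $((h^{d-1},\phi_n))_n$ has the law of $((g,\phi_n))_n$ for $g\sim\FGF_{s-1/2}(\R^{d-1})$ as constructed in Section~\ref{Rd}. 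Since that $g$ is a.s.\ a continuous functional on $\s_H(\R^{d-1})$, it is recovered from its values on $\{\phi_n\}$ by a fixed Borel map (continuous extension); applying the same map to $((h^{d-1},\phi_n))_n$ yields a random element of $\s'_H(\R^{d-1})$ of law $\FGF_{s-1/2}(\R^{d-1})$ which is a Borel function of $h^d$. Finally, for any fixed $\phi\in C_c^\infty(\R^{d-1})\cap\s_H(\R^{d-1})$, approximating $\phi$ by the $\phi_n$ in $\dH^{-(s-1/2)}(\R^{d-1})$ --- hence $\phi^\uparrow$ by $\phi_n^\uparrow$ in $\dH^{-s}(\R^d)$ via the isometry --- together with the $L^2$-continuity of $\psi\mapsto(h^d,\psi)$ on $T_s(\R^d)$ gives $(h^{d-1},\phi)=C^{-1}(h^d,\phi^\uparrow)$ almost surely, which is \eqref{eq:restriction} with this value of $C$.

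I expect the radial integral to be entirely routine --- it is what simultaneously produces the constant $C$ and the sharp threshold $s>\tfrac12$ --- and the genuine care to be needed in the ``closure/measurability'' bookkeeping of the last two steps: checking that $\phi^\uparrow$ truly lies in $T_s(\R^d)$ rather than merely in the homogeneous Sobolev space, and manufacturing $h^{d-1}$ as an honest Borel function of $h^d$ that reproduces \eqref{eq:restriction} for each fixed test function. Both are soft, but they force one to be scrupulous about which function spaces and which modes of convergence are in play at each point.
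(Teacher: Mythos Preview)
Your proof is correct and takes a genuinely different route from the paper. The paper mollifies $\phi^\uparrow$ by a full $d$-dimensional approximation to the identity and then appeals to the covariance kernel formula (Theorem~\ref{KernelComp}) to show the mollified sequence is $\dot H^{-s}$-Cauchy and to identify the limiting variance with the $\FGF_{s-1/2}(\R^{d-1})$ covariance; you instead compute everything directly on the Fourier side, reducing $\|\phi^\uparrow\|_{\dot H^{-s}(\R^d)}^2$ to $\|\phi\|_{\dot H^{-(s-1/2)}(\R^{d-1})}^2$ times a single beta integral, and you mollify only in the normal variable. Your approach is more elementary (it sidesteps the case analysis of Theorem~\ref{KernelComp}), produces an explicit constant $C$, and makes the hypothesis $s>\tfrac12$ appear transparently as the convergence condition for $\int_\R(1+u^2)^{-s}\,du$. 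The paper's route, by contrast, has the conceptual advantage of showing directly that the covariance \emph{kernel} restricts correctly --- that $G^s_{\R^d}$ pulled back to the hyperplane is a constant multiple of $G^{s-1/2}_{\R^{d-1}}$ --- which is the physical-space meaning of ``restriction preserves the Hurst parameter.'' The final step (building $h^{d-1}$ as a measurable function of $h^d$ from a countable dense family and extending by continuity) is essentially identical in both arguments.
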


We refer to $h^{d-1}$ as the \textbf{restriction} of $h^d$ to $\R^{d-1}$.

\begin{proof}
  Let $\{\eta_k\}_{k \in \N}$ be an approximation to the identity, which
  means that  
  \begin{enumerate}[label=(\roman*)]
  \item $\eta_k$ is smooth for all $k\in\N$, 
  \item $\eta_k\geq 0$, 
  \item $\mathrm{supp}(\eta_k)\subset B(0,1/k)$, and
  \item $\int_{\R^d} \eta_k(x)\,dx = 1$. 
  \end{enumerate} 
  Then $\phi^\uparrow_k\colonequals \eta_k*\phi^\uparrow \in
  \s_H(\R^{d})$, because applying the definition of a convolution and
  making a substitution $w=x-y$ yields  
  \[
  \int_{\R^d} x^\alpha (\eta_k * \phi^\uparrow)(x)\,dx = \int_{\R^d} \eta_k(w)
  \overbrace{\int_{\R^{d-1}} (w + y)^\alpha \phi(y) \,dy}^{0}\,dw = 0,
  \]
  since $\int x^\alpha \phi(x) \,dx = 0$ whenever $|\alpha|\leq
  H$. Moreover we can use Theorem~\ref{KernelComp} to check that
  $\{\phi_k^\uparrow\}_{k \in \N}$ is a Cauchy sequence in $\dot
  H^{-s}(\R^d)$. Since $\phi_k^\uparrow\rightarrow \phi^\uparrow$ in
  $\mathcal{S}'(\R^d)$, we have $\phi_k \rightarrow \phi^\uparrow$ in
  $\dot H^{-s}(\R^d)$ and therefore $\phi^\uparrow \in T_s(\R^d)$.

  Since $\phi_k^\uparrow \rightarrow \phi^\uparrow$ in $\dot H^{-s}(\R^d)$,
  we have $\Var [(h^d,\phi)]=\lim_{k\rightarrow \infty}\|\phi_k\|^2_{\dot
    H^{-s}(\R^d)}$. By definition of $\{ \eta_k \}$, $\Var [(h^d,\phi)]$
  satisfies the formula in Theorem~\ref{KernelComp} where we replace $\R^d$
  by $\R^{d-1}$ and set $\phi_1=\phi_2=\phi$. This is the covariance
  structure of FGF on $\R^{d-1}$ with the same Hurst parameter as $h^d$, up
  multiplicative constant. In other words, there is a constant $C$ so that
  if we define $(h^{d-1},\phi)\colonequals C^{-1} (h^d,\phi^\uparrow)$ for
  all $\phi$ in a countable dense subset $\Phi \subset C_c^\infty(\R^d)$,
  then $h^{d-1}$ has the law of an $\FGF_{s-1/2}(\R^{d-1})$ restricted to
  $\Phi$. Therefore, $h^{d-1}$ extends uniquely to a tempered distribution
  on $\R^{d-1}$, and it satisfies \eqref{eq:restriction} for all $\phi\in
  C_c^\infty(\R^d) \cap \s_H(\R^d)$ by continuity. 
\end{proof}

Since $h^{d-1}$ is a function of $h^d$ almost surely, we can define a
measurable function $\resop$ on $\mathcal{S}'$ such that $h^{d-1}=\resop
h^d$. We call $\resop$ the restriction operator. We can see that $\resop$
maps an FGF to a lower dimensional FGF with the same Hurst parameter. By
applying $\resop$ repeatedly, we can restrict an $\FGF(\R^d)$ to an
$\FGF(R^{d'})$ with the same Hurst parameter, as long as $d'>-2H$.

When the Hurst parameter is positive, $\FGF_s(\R^d)$ is a pointwise-defined
random function, so $\resop$ agrees with the usual restriction of
functions. In particular, we note that the restriction of a
multidimensional fractional Brownian motion with Hurst parameter $H$ to a
line through the origin is a linear fractional Brownian motion with Hurst
parameter $H$.

\section{Regularity of FGF(D)} \label{sec:regularity}
\makeatletter{}Let $s\geq 0$, and let $h\sim \FGF_s(\R^d)$ be coupled with $h_D\sim
\FGF_s(D)$ and $h_D^{\Har}$ as in Section~\ref{sec:projection}, so that $h =
h_D + h_D^{\Har}$. In this section, we will show that $h^{\Har}_D$ is
smooth in $D$ almost surely.  First, we record some results on the
fractional Laplacian following Section 2 of \cite{silvestre2007regularity}.

\begin{lemma}\label{harmonic regularity1}
  If $s$ is a positive integer and $g$ is a distribution on $D$
  such that $(-\La)^sg$ is smooth on $D$, then $g$ is smooth on $D$.
\end{lemma}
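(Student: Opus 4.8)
Since $s$ is a positive integer, $(-\La)^s$ is a constant-coefficient differential operator of order $2s$ whose symbol $|\xi|^{2s}$ vanishes only at $\xi=0$; it is therefore elliptic, and the lemma is the assertion that such operators are hypoelliptic. I would prove it by a local Sobolev bootstrap, in the spirit of Section~2 of \cite{silvestre2007regularity}. As smoothness is a local property, it suffices to fix $x_0\in D$ and a ball $B=B(x_0,2r)$ with $\overline B\subset D$ and to show that $g$ agrees with a smooth function on $B(x_0,r)$. Since $g$ is a distribution on $D$, for any $\psi\in C_c^\infty(B)$ the compactly supported distribution $\psi g$ lies in the (inhomogeneous) Sobolev space $H^{t_0}(\R^d)$ for some real $t_0$, because a compactly supported distribution has finite Sobolev order.

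The heart of the matter is a one-derivative gain. Pick cutoffs $\psi_0,\psi_1,\dots\in C_c^\infty(B)$, each identically $1$ on $B(x_0,r)$ and, for $n\geq 0$, with $\psi_n\equiv 1$ on a neighbourhood of $\supp\psi_{n+1}$. I claim $\psi_n g\in H^{t_0+n}(\R^d)$ for all $n$, by induction on $n$; the base case is the previous paragraph. For the inductive step, expand
\[
(-\La)^s(\psi_{n+1}g)=\psi_{n+1}\,(-\La)^s g+\bigl[(-\La)^s,\psi_{n+1}\bigr]g.
\]
The first term lies in $C_c^\infty(\R^d)$ by hypothesis. The commutator $[(-\La)^s,\psi_{n+1}]$ is a differential operator of order at most $2s-1$ whose coefficients are supported in $\supp\nabla\psi_{n+1}$, a set on a neighbourhood of which $\psi_n\equiv 1$; hence (the operator being local) $[(-\La)^s,\psi_{n+1}]g=[(-\La)^s,\psi_{n+1}](\psi_n g)\in H^{t_0+n-2s+1}(\R^d)$ by the inductive hypothesis. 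Thus $|\xi|^{2s}\widehat{\psi_{n+1}g}\in L^2(\langle\xi\rangle^{2(t_0+n-2s+1)}\,d\xi)$, while also $\psi_{n+1}g=\psi_{n+1}(\psi_n g)\in H^{t_0+n}(\R^d)$; splitting the frequency integral at $|\xi|=1$ and using $|\xi|^{2s}\langle\xi\rangle^{2(t_0+n-2s+1)}\geq 2^{-s}\langle\xi\rangle^{2(t_0+n+1)}$ on $\{|\xi|\geq 1\}$ gives $\psi_{n+1}g\in H^{t_0+n+1}(\R^d)$, completing the induction. Since every $\psi_n$ equals $1$ on $B(x_0,r)$, $g$ agrees on $B(x_0,r)$ with an element of $\bigcap_{n}H^{t_0+n}(\R^d)\subset C^\infty(\R^d)$ by Sobolev embedding; as $x_0\in D$ was arbitrary, $g\in C^\infty(D)$.

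The one point requiring care is the bookkeeping of supports — arranging that the commutator term is governed by a cutoff for which the previous step already provides control — together with the observation that each step \emph{strictly} improves the Sobolev exponent, which is exactly where ellipticity enters ($[(-\La)^s,\psi]$ has order $2s-1<2s$). One can avoid the chain of cutoffs by instead constructing a parametrix: choose $\Psi\in C^\infty(\R^d)$ with $\Psi\equiv 0$ near the origin and $\Psi\equiv 1$ outside a ball, and let $E$ be the Fourier multiplier with symbol $\Psi(\xi)|\xi|^{-2s}$, so that $(-\La)^sE=I-S$ with $S$ the Fourier multiplier of symbol $1-\Psi\in C_c^\infty(\R^d)$, i.e.\ convolution against a Schwartz function. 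Then $\psi g=E\bigl((-\La)^s(\psi g)\bigr)+S(\psi g)$, and since $E$ carries $H^{t-2s+1}(\R^d)$ into $H^{t+1}(\R^d)$ and $S$ carries compactly supported distributions into $C^\infty$, the same one-step gain — and hence the conclusion — follows. In either form this is the classical interior elliptic regularity theorem; see, e.g., \cite{folland1999real}.
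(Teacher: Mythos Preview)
Your argument is correct: it is the classical interior elliptic regularity proof, carried out via a nested-cutoff Sobolev bootstrap (with the parametrix variant sketched at the end). The paper, by contrast, takes a much shorter and more special-purpose route: it first treats $s=1$ by reducing to Weyl's lemma --- on an arbitrary ball $U\subset D$ one writes down a smooth particular solution $g_1$ with $(-\La)g_1|_U=(-\La)g|_U$ (via the Newtonian potential, cf.\ \cite[Corollary~2.20]{folland1999real}), so that $g-g_1$ is harmonic and hence smooth on $U$ --- and then simply inducts on $s$, since $(-\La)^s g$ smooth forces $(-\La)^{s-1}g$ smooth by the $s=1$ case. Your approach is the more robust one (it applies verbatim to any constant-coefficient elliptic operator and makes the one-derivative gain explicit), while the paper's approach is the minimal one: it leverages the factorization $(-\La)^s=(-\La)\circ(-\La)^{s-1}$, which is available precisely because $s$ is an integer, and thereby avoids any Sobolev machinery beyond the classical Weyl lemma.
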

\begin{proof}
  Let $s=1$; the case $s>1$ follows by induction. If $\La g=0$, the desired
  result is Weyl's lemma (see Appendix B of \cite{lax}). If $\La g$ is not
  zero, suppose that $U \subset D$ is an arbitrary ball, and let $g_1$ be a
  function which is smooth on $U$ such that
  $\left.(-\La)g_1\right|_U=\left.(-\La) g\right|_U$
  \cite[Corollary~2.20]{folland1999real}. Applying the result for the case
  $\La g = 0$, we see that $g-g_1$ is smooth on $U$, and hence so is
  $g$. Since $U$ was arbitrary, $g$ is smooth on $D$.
\end{proof}

\begin{lemma}\label{harmonic regularity}
  Let $0<s<1$, and let $B\subset \R^d$ be an open ball. If $(-\La)^sf$ is
  smooth in $B$, then $f$ is smooth in $B$.
\end{lemma}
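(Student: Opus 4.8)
The plan is to mirror the structure of the already-proved case $s\in\Z$ (Lemma~\ref{harmonic regularity1}): first localize and subtract an explicit smooth particular solution of $(-\La)^s(\cdot)=(-\La)^sf$ in $B$, reducing to the case where $f$ is $s$-harmonic in a ball, and then prove that $s$-harmonic functions are smooth in the interior using the explicit Poisson kernel for the ball. This is exactly the route taken in Section~2 of \cite{silvestre2007regularity}, so I would follow that reference and fill in the points specific to our distributional framework.

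\emph{Reduction to the $s$-harmonic case.} Fix $x_0\in B$ and a ball $B'=B(x_0,r)$ with $\overline{B'}\subset B$, and choose $\psi\in C_c^\infty(B)$ with $\psi\equiv 1$ on a neighborhood $N$ of $\overline{B'}$. Since $(-\La)^sf$ is smooth on $B$, the function $v\colonequals\psi\cdot(-\La)^sf$ is smooth and compactly supported on $\R^d$. Let $w\colonequals(-\La)^{-s}v$; because $\wh{v}$ is Schwartz, $|\xi|^{|\alpha|-2s}\wh{v}(\xi)\in L^1(\R^d)$ for every multi-index $\alpha$ (with the caveat that for $d=1$ and $s\in[\tfrac12,1)$ one instead defines $w$ modulo a polynomial, using the spaces $\s_k'(\R^d)$ and $\mathcal{U}_s(\R^d)$ of Section~\ref{sec: fractional laplacian}), so $w$ is $C^\infty$. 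Then $u\colonequals f-w$ satisfies $(-\La)^su=(-\La)^sf-v=(1-\psi)(-\La)^sf$, which vanishes on $N$; hence $u$ is $s$-harmonic in $B'$. It therefore suffices to show $u$ is smooth near $x_0$, and since $x_0\in B$ was arbitrary this yields smoothness of $f=u+w$ on all of $B$. Note also that $u$ inherits polynomial growth from $f$ and $w$, so it satisfies the integrability bound \eqref{eq:finite_integral} for an appropriate $k$.

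\emph{Smoothness of $s$-harmonic functions.} For $u$ as above there is a fractional Poisson representation on $B''=B(x_0,r/2)$,
\[
u(x)=c_{d,s}\int_{\R^d\setminus B'}\left(\frac{r^2-|x-x_0|^2}{|y-x_0|^2-r^2}\right)^{\! s}\frac{u(y)}{|x-y|^{d}}\,dy,\qquad x\in B'',
\]
obtained from the exit distribution of an isotropic $2s$-stable Lévy process from $B'$ (cf.\ \cite{silvestre2007regularity} and the potential-theoretic point of view of Section~\ref{sec:long-range-FGF}); its validity for $u$ in our growth class follows from a maximum-principle uniqueness statement for the exterior Dirichlet problem. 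For $x$ in $B''$ the kernel is smooth in $x$, and each of its $x$-derivatives is bounded by a constant (depending on the derivative) times $(1+|y-x_0|)^{-d-2s}$, so the integrability of $u$ against $(1+|y|)^{-d-2s}$ licenses differentiation under the integral sign to all orders. Hence $u\in C^\infty(B'')$, which completes the proof.

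\emph{Main obstacle.} The cutoff reduction and the smoothness of the Riesz potential of a test function are routine; the substance is the smoothness of $s$-harmonic functions, and within that the delicate points are (i) making sense of the Poisson integral and (ii) justifying that the \emph{distributional} identity $(-\La)^su=0$ on $B'$ actually forces the Poisson representation. Both are controlled by carrying the polynomial-growth bound \eqref{eq:finite_integral}, which is built into the class of objects on which $(-\La)^s$ acts, through the argument, together with the fractional maximum principle; the one-dimensional borderline case $d=1$, $s\in[\tfrac12,1)$ additionally requires phrasing everything modulo polynomials, as in Section~\ref{sec: fractional laplacian}.
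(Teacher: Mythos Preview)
Your proposal is correct and follows essentially the same two-step strategy as the paper: reduce to the $s$-harmonic case by subtracting a smooth particular solution, then use the explicit Poisson kernel for the ball to show that $s$-harmonic functions are smooth. The only minor difference is in the reduction step: you cut off $(-\La)^sf$ and apply the whole-space Riesz potential $(-\La)^{-s}$ to obtain a particular solution on a slightly smaller ball, whereas the paper convolves $(-\La)^sf$ with the Green's function $G_B^s$ of \eqref{eq:green01} on $B$ itself to get a particular solution with zero exterior data on all of $B$; both are standard and both ultimately defer to \cite{silvestre2007regularity}.
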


\begin{proof}
  By \cite[(1.6.11), p.\ 121]{landkof1972foundations} (see also Section 5.1
  in \cite{silvestre2007regularity}) the solution $u$ to $\left.(-\La)^s
    u \right|_B = 0$ and $\left. f \right|_{\R^d \setminus B} = \left. f
  \right|_{\R^d \setminus B}$ is given by the convolution $u(y) =
  \int_{\R^d \setminus B} f(x)P(x,y)\,dy$ where $P(x,y)$, the Poisson
  kernel of $(-\La)^s$, is proportional to
  \[
  \frac{(1-|x|^2)^s}{(|y|^2-1)^s|x-y|^d}. 
  \]
  Since $P$ is smooth, we see that 
  \begin{equation} \label{eq:smoothsol} 
  g \text{ is smooth in } B \text{ whenever }
  (-\La)^sg=0 \text{ in } B. 
  \end{equation} 

  By convolving with the Green's function \eqref{eq:green01} for the
  fractional Laplacian on $B$, we see that there exists a continuous
  solution $g$ of the equation $(-\La)^sg=(-\La)^sf$ on $B$ and $g=0$ on
  $\R^d\setminus B$ which is smooth in $B$. Since $f-g$ is also smooth in
  $B$ by \eqref{eq:smoothsol}, we conclude that $f$ is smooth in $B$.
\end{proof}

We can now prove the main result in this section.
\begin{theorem}\label{regularity}
  If $D\subsetneq \R^d$ is an allowable domain, then $h^{\Har}_D$ is
  smooth on $D$ almost surely. If $U\subset D$ is a domain, then
  $h_{U,D}^{\Har}$ is smooth on $U$ almost surely.
\end{theorem}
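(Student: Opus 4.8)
The plan is to prove that, almost surely, $(-\La)^s h^{\Har}_D$ vanishes as a distribution on $D$, and then apply the deterministic regularity results Lemma~\ref{harmonic regularity1} and Lemma~\ref{harmonic regularity}. (When $s=0$ the claim is immediate, since $h^{\Har}_D$ is then white noise restricted to $\R^d\setminus D$ and hence vanishes on $D$, so assume $s>0$.)

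For the harmonic equation: recall from Section~\ref{sec:projection} that $h^{\Har}_D=P^{\Har}_D h$, where $P^{\Har}_D$ is the orthogonal projection of $\dH^s(\R^d)$ onto $\Har_s(D)=\dH^s(\R^d)\ominus\dH^s_0(D)$ (Proposition~\ref{prop:directsum}), and that $(h^{\Har}_D,\phi)$ is a centered Gaussian with variance $\|P^{\Har}_D(-\La)^{-s}\phi\|^2_{\dH^s(\R^d)}$. For $\psi\in C_c^\infty(D)$ one checks that $(-\La)^s\psi\in T_s(\R^d)$ (its Fourier transform $|\xi|^{2s}\wh\psi(\xi)$ lies in $L^2(|\xi|^{-2s}\,d\xi)$ and vanishes at the origin to order exceeding $\lfloor H\rfloor$), so $(h^{\Har}_D,(-\La)^s\psi)$ is a well-defined Gaussian variable, of variance $\|P^{\Har}_D(-\La)^{-s}(-\La)^s\psi\|^2_{\dH^s(\R^d)}=\|P^{\Har}_D\psi\|^2_{\dH^s(\R^d)}$, which is zero because $\psi\in C_c^\infty(D)\subset\dH^s_0(D)=\ker P^{\Har}_D$. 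Hence $(h^{\Har}_D,(-\La)^s\psi)=0$ a.s. for each fixed $\psi$; running over a countable dense subset of $C_c^\infty(D)$ and using that $(-\La)^s h^{\Har}_D$ is a genuine random tempered distribution (so each realization is continuous on $C_c^\infty(D)$), one upgrades this to: almost surely $(-\La)^s h^{\Har}_D$ is the zero distribution on $D$. I expect this step to be the main obstacle --- one has to install the test-function framework (the $T_s(\R^d)$ and homogeneous-Sobolev spaces of Section~\ref{sec:preliminaries}, together with allowability of $D$) so that $(h^{\Har}_D,(-\La)^s\psi)$ is meaningful for \emph{every} $\psi\in C_c^\infty(D)$, and one has to know beforehand that $(-\La)^s h^{\Har}_D$ admits an almost-surely continuous realization as a random distribution, in order to pass from the countable family to all test functions.

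Granting the harmonic equation, the regularity is a bootstrap. Write $s=m+\sigma$ with $m=\lfloor s\rfloor\in\Z_{\ge0}$ and $\sigma\in[0,1)$, so $(-\La)^s=(-\La)^m(-\La)^\sigma$, and fix a realization with $(-\La)^s h^{\Har}_D|_D=0$. If $m\ge1$, the distribution $g\colonequals(-\La)^\sigma h^{\Har}_D$ (well-defined by the calculus of Section~\ref{sec:preliminaries}) satisfies $(-\La)^m g=0$ on $D$, so Lemma~\ref{harmonic regularity1} gives that $g$ is smooth on $D$; take $g=h^{\Har}_D$ if $m=0$. If $\sigma>0$, then $(-\La)^\sigma h^{\Har}_D=g$ is smooth on every open ball $B\subset D$, so Lemma~\ref{harmonic regularity} gives that $h^{\Har}_D$ is smooth on each such $B$, hence on $D$; if $\sigma=0$ then $s=m$ and Lemma~\ref{harmonic regularity1} already delivered smoothness. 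This proves the first assertion. For the second, the coupling $(h_D,h^{\Har}_{U,D},h_U)$ of Section~\ref{sec:projection} has $h^{\Har}_{U,D}=(P_D-P_U)h$, and the same variance computation shows that for $\psi\in C_c^\infty(U)$ the variable $(h^{\Har}_{U,D},(-\La)^s\psi)$ has variance $\|(P_D-P_U)\psi\|^2_{\dH^s(\R^d)}=0$, since $\psi\in\dH^s_0(U)\subset\dH^s_0(D)$ forces $P_U\psi=P_D\psi=\psi$; thus $(-\La)^s h^{\Har}_{U,D}=0$ on $U$ a.s., and the bootstrap above, applied with $U$ in place of $D$, yields smoothness on $U$.
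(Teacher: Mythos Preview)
Your strategy—compute the variance to show $(-\La)^s h^{\Har}_D$ vanishes on $D$, then invoke Lemmas~\ref{harmonic regularity1} and~\ref{harmonic regularity}—is the same as the paper's, and the variance calculation is correct. You also correctly flag the passage from ``$=0$ a.s.\ for each fixed $\psi$'' to ``a.s., $=0$ for all $\psi$'' as needing an a.s.\ continuous realization of $(-\La)^s h^{\Har}_D$ as a distribution.

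The genuine gap is in your bootstrap. You write $s=\lfloor s\rfloor+\sigma$ and claim $g=(-\La)^\sigma h^{\Har}_D$ is ``well-defined by the calculus of Section~\ref{sec:preliminaries}''. It is not: the fractional Laplacian there is only defined on homogeneous Sobolev elements or on measurable functions satisfying the growth bound~\eqref{eq:finite_integral}, and $h^{\Har}_D$ is a priori merely a random element of $\s'_H(\R^d)$. Concretely, pairing $h^{\Har}_D$ with $(-\La)^\sigma\phi$ for $\phi\in C_c^\infty(D)$ requires $(-\La)^\sigma\phi\in T_s(\R^d)\subset\dot H^{-s}(\R^d)$, i.e.\ $\int|\xi|^{4\sigma-2s}|\wh\phi|^2\,d\xi<\infty$, which needs $2\sigma>H$ and fails whenever $H>2\{s\}$. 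Likewise, the proof of Lemma~\ref{harmonic regularity} uses the Poisson-kernel representation, which implicitly requires $h^{\Har}_D$ to be a \emph{function} integrable against $(1+|y|)^{-d-2\sigma}$; this is vacuous when $H\le 0$ (where $h^{\Har}_D$ is not a function) and unverified when $H>0$.

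This is exactly what the paper's case analysis supplies. Restricting first to $d$ even and $H\in(0,2)$, the paper uses scale-invariance moment bounds to verify~\eqref{eq:finite_integral}, so that $(-\La)^H h^{\Har}_D$ is a genuine tempered distribution; the split used is $s=\tfrac{d}{2}+H$ (not $\lfloor s\rfloor+\sigma$), so that the integer part $(-\La)^{d/2}$ is a local operator and the fractional exponent equals $H$, matching the growth rate of $h^{\Har}_D$. All other $s>0$ are then reached via the distributional identity~\eqref{eq:laplacian_action} together with Lemma~\ref{harmonic regularity1}, and odd $d$ is handled by restricting an FGF from $\R^{d+1}$. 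Your split $s=\lfloor s\rfloor+\sigma$ does not come with these growth estimates, and your bootstrap as written does not cover $H\le 0$ or $H>2\sigma$.
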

\begin{proof} We consider several cases: 

  (i) We first suppose $d$ is even, $0<H<1$, and $D$ is a ball. The argument
  in case (ii) of Proposition~\ref{prop:reg} shows that $h$ satisfies
  condition \eqref{eq:finite_integral} with $s=H$ almost surely. Since
  $h_D^{\Har} = h - h_D$ and $h_D$ is supported in $D$, we see that
  $h_D^{\Har}$ also satisfies \eqref{eq:finite_integral} with
  $k=-1$. Therefore, $\Lap^H h_D^{\Har}$ is tempered distribution. By the
  definition of $h_D^{\Har}$ as a random field with $(h,\phi)\sim
  \mathcal{N}\left(0,\|P_D^{\Har} (-\La)^{-s} \phi\|_{\dot
      H^s(\R^d)}\right)$, we have for all $\phi \in C_c^\infty(D)$,
  \[
  (\Lap^{\frac{d}{2}}\Lap^H h_D^{\Har},\phi) = (\Lap^{s} h_D^{\Har},\phi) = 0
  \] 
  almost surely. Considering a countable dense subset of $C_c^\infty(D)$, we
  conclude that $\left.(-\La)^{\frac{d}{2}}(-\La)^H h_D^{\Har}\right|_D = 0$
  almost surely. Thus by Lemma \ref{harmonic regularity1}, $(-\La)^H
  h_D^{\Har}$ is smooth in $ D $ almost surely. By Lemma \ref{harmonic
    regularity}, $h_D^{\Har}$ is smooth in $D$ almost surely.
  
  (ii) Suppose that $d$ is even, $1<H<2$ and $D$ is a ball whose
  closure does not contain the origin. By the scale invariance of $h$, there
  exists $c>0$ so that have $\mathbb{E}|\nabla h(x)|=c|x|^{H-1}$ for all $x
  \in \R^d$. Thus 
  \[ \mathbb{E}\left[\int_{|x|>1} \frac{|\nabla
      h(x)|}{|x|^{d+2H-2}}\,dx\right]=\int_{|x|>1} \frac{\mathbb{E}|\nabla
    h(x)|}{|x|^{d+2H-2}}\,dx <\infty,
  \] 
  which implies that $|\nabla h|$ satisfies condition
  \eqref{eq:finite_integral} with $s=H-1$ almost surely. Since
  $h\in C^1(\R^d)$ and $h_D \in C^1(\R^d)$, we have $h^{\Har}_D\in
  C^1(\R^d)$. Therefore, $|\nabla h^{\Har}_D |$ satisfies
  \eqref{eq:finite_integral} almost surely. By the same argument as in Case
  (i) above, $\partial_{x_i} h^{\Har}_D$ is smooth in $D$ for all $1\leq
  i\leq d$. Therefore $h^{\Har}_D$ is smooth in $D$.
  
  (iii) If $d$ is even, $H \in \{0,1\}$ and $D$ is a ball, then $s$ is an
  integer. So $h^{\Har}_D$ is smooth by Lemma \ref{harmonic regularity1}.
  
  (iv) If $D\subsetneq \R^d$ is allowable, suppose that $U\subset D$ is an
  arbitrary ball. Since $\E[h^2_D(x)]\leq \E[h^2(x)]$ and
  $\frac{\E[(h_D(x)^2]}{(\E|h_D(x)|)^2}=\frac{\E[(h(x))^2]}{(\E|h(x)|)^2}$,
  the arguments for the preceding cases imply that $h_{U,D}^{\Har}$ is
  smooth on $U$. By the formula
  \[
  h^{\Har}_U=P_{\Har}^U(h_D+h^{\Har}_D )=h_{U,D}^{\Har}+h^{\Har}_D,
  \]
  we see that $h^{\Har}_D$ is also smooth on $U$. Since $U$ is arbitrary
  ball contained in $D$, this implies that $h^{\Har}_D$ is smooth in $D$. If
  $U$ is an arbitrary allowable domain in $D$, again by
  $h^{\Har}_U=h_{U,D}^{\Har}+h^{\Har}_D$, we see that $h_{U,D}^{\Har}$ is
  smooth on $U$. 
  
  (v) Suppose that $d$ is even and $s>0$. In the preceding cases we have
  established the result for $H\in [0,2)$. Since
  $\Lap h_D^{\Har}\stackrel{d}{=}\wt{h}_D^{\Har}$ when $s>2$, $h\sim
  \FGF_s(D)$, and $\wt{h} \sim \FGF_{s-2}(D)$, Lemma \ref{harmonic
    regularity1} establishes the result for all $s>0$. 
  
  (vi) Suppose that $d$ is odd, $H>0$, and $H$ is not an even
  integer. Suppose $D$ is an allowable domain in $\R^{d}$ and regard $\R^d$
  as a subspace of $\R^{d+1}$ by mapping $x\in \R^{d-1}$ to
  $(x_1,x_2,\ldots,x_{d-1},0)$. Since $h=h_D+h_D^{\Har}$ where $h_D$ and
  $h_D^{\Har}$ are independent, $h_D^{\Har}$ is the conditional expectation
  of $h$ given $h$ on $\R^d\setminus D$.
    So if we regard $\R^d \setminus D$ as a closed set in $\R^{d+1}$, the
  restriction of $h_{\R^{d+1}\setminus (\R^d \setminus D)}^{\Har}$ has the
  same law as $h_D^{\Har}$ on $\R^d$. Since the restriction of a smooth
  function is smooth, we conclude that $h_D^{\Har}$ is a smooth function in
  $D$ almost surely. 
  
  (vii) If $d$ is odd and $s>0$, we apply the argument in Case (v) to the
  result from Case (vi).
\end{proof}

Since $h=h_D+h^{\Har}_D$ and $h^{\Har}_D $ is smooth in $D$, the regularity
of $\FGF_s(D)$ is the same as the regularity of $\FGF_s(\R^d)$. In other
words, $h_D$ is has $\alpha$-H\"older derivatives of order up to $k$, where
$k = \lceil H \rceil -1$ and $\alpha < H - \lceil H \rceil$
(Proposition~\ref{prop:reg}).

\section{The eigenfunction FGF}
\makeatletter{}
Let $D\subset \R^d$ be a bounded domain, and let $s\in (0,1)$. In this
section we discuss an different notion of a fractional Gaussian field on
$D$, which we call the \textbf{eigenfunction FGF} and denote
$\EFGF_s(D)$. 

The eigenfunction FGF is based on the following definition of a fractional
Laplacian operator on $D$.  Following Section 2.3 in
\cite{sheffield2007gaussian}, we let $\{f_n\}_{n\in \N}$ be an orthonormal
basis of eigenfunctions of the Dirichlet Laplacian on $D$, arranged in
increasing order of their corresponding eigenvalues $\lambda_n>0$. We
define for all $\phi = \sum (f_n,\phi)_{L^2(\R^d)} f_n \in L^2(D)$ the
formal sum
\[
(-\Delta)_D^s \phi = \sum_{n \in \N} \lambda_n^s (\phi,f_n)_{L^2(D)}
f_n,
\]
which converges if $\phi \in C_c^\infty(D)$
\cite{sheffield2007gaussian}. We call $(-\Delta)_D^s$ the eigenfunction fractional
Laplacian operator on $D$. This fractional Laplacian operator determines
a Hilbert space, analogous to $\dot H_0^s(D)$, with inner product given by
\[
\sum_{n \in \N} \lambda_n^s
(\phi_1,f_n)_{L^2(D)}(\phi_2,f_n)_{L^2(D)}.
\]
Note that $\{\lambda_n^{-s/2}f_n\}_{ n \in \N}$ defines an orthonormal
basis with respect to this inner product. We define $\EFGF_s(D)$ to be a
standard Gaussian on this space; more precisely, let $\{Z_n\}_{n \in \N}$
be an i.i.d.\ sequence of standard normal random variables and set for all
$\phi \in C_c^\infty(D)$,
\[
(h,\phi) \colonequals \sum_{n\in \N} Z_n \lambda_n^{-s/2} (f_n,\phi).
\]
By Weyl's law, $\lambda_n = \Theta(n^{2/d})$ as $n\to\infty$, so the sum on
the right-hand side converges almost surely for each $\phi$. Furthermore,
the functional $h$ defined this way is a continuous functional by the same
argument given for the GFF case in \cite{sheffield2007gaussian}.  We define
$\EFGF_s(D)$ to be the law of $h$.

Both the fractional Laplacian and the eigenfunction fractional Laplacian
can be understood in terms of a local operator in $d+1$ dimensions. In
\cite{caffarelli2007extension}, the fractional Laplacian is realized as a
boundary derivative for an extension problem in $\R^d \times [0,\infty)$. A
corresponding analysis for the eigenfunction Laplacian is developed in
\cite{cabre2010positive} and \cite{capella2011regularity} by considering a
similar extension problem in $D\times [0,\infty)$. We carry out an
analogous comparison between $\FGF_s(\R^d)$, $\FGF_s(D)$, and $\EFGF_s(D)$
by realizing each as a restriction of a higher-dimensional random field
that can be understood as a Gaussian free field with spatially varying
resistance (see Propositions~\ref{prop:restrictionRd}, \ref{prop:domain1}
and \ref{prop:domain2} below).

Let $s\in (0,1)$, and define $\alpha = \frac{1-2s}{1-s} \in
(-\infty,1)$. For simplicity, we will assume $d \geq 2$. We introduce
the coordinates $(x_1,\ldots,x_d,z)$ for $\R^{d+1}$, and we define the
following variant of the gradient operator. For $\phi\in \s(\R^{d+1})$, we
set
\[
\nablaa \phi \colonequals \left(\frac{\partial \phi}{\partial x_1},
  \frac{\partial \phi}{\partial x_2},\ldots,\frac{\partial \phi}{\partial x_d},
  |z|^{\alpha/2} \frac{\partial \phi}{\partial z}\right).
\]
We will use
\[
\s_{\text{sym}}(\R^{d+1})\colonequals \{\phi \in
\s_0(\R^{d+1})\,:\,\phi(x,z) = \phi(x,-z) \text{ for all }x,z\}
\]
as a space of test functions. Integrating by parts (see
\cite[Chapter~7]{bass1998diffusions} for more details), we find that for
all $\phi \in \s_{\text{sym}}(\R^d)$, we have
\[
\int_{\R^{d+1}} |\nablaa\phi|^2 =
-\int_{\R^{d+1}}\phi(L_\alpha \phi),
\]
where the operator $L_\alpha$ is defined by
\[
L_\alpha=\frac{\partial^2}{\partial x_1^2}+\frac{\partial^2}{\partial
  x_1^2}+\cdots +\frac{\partial^2}{\partial x_d^2} +
\frac{\partial}{\partial z}\left(|z|^\alpha\frac{\partial}{\partial
    z}\right).
\]

By the Bochner-Minlos theorem, we can define a random tempered distribution
$h_\alpha$ for which
\begin{align} \label{eq:char_mod_GFF}
\E[\exp(i(h_\alpha,\phi))] &= \exp\left(-\frac{1}{2}\int_{\R^{d+1}} \wt{\phi}
  (-L_\alpha)^{-1}\wt{\phi}\right)  \\ \nonumber &=
\exp\left(-\frac{1}{2}\int_{\R^{d+1}} |\nablaa
  (-L_\alpha)^{-1}\wt{\phi}|^2\right),
\end{align}
where $$\wt{\phi}(x,z) = \tfrac{1}{2}(\phi(x,z)+\phi(x,-z)),$$ and
$(-L_\alpha)^{-1}\phi$ satisfies $-L_\alpha(-L_\alpha)^{-1}\phi=\phi$ and
vanishes at infinity---see the proof of
Proposition~\ref{prop:restrictionRd} for the existence of such a
function. We then restrict the domain of $h_\alpha$ to
$\s_{\text{sym}}(\R^{d+1})$, so that \eqref{eq:char_mod_GFF} holds with
$\phi$ in place of $\wt{\phi}$.

Since the right-hand side of \eqref{eq:char_mod_GFF} reduces when
$\alpha=0$ to the GFF characteristic function evaluated at $\wt{\phi}$, we
may think of $h$ as a symmetrized and re-weighted\footnote{We are using the
  term \textit{weight} here in sense described for the disrete GFF in
  Section~4 of \cite{sheffield2007gaussian}.} version of the Gaussian free
field on $\R^{d+1}$.
We define restriction of $h_\alpha$ to $\R^d\times \{0\}$ by
\[
(\left.h_\alpha\right|_{\R^d\times \{0\}},\phi) \colonequals
(h_\alpha,(x,z)\mapsto \phi(x)\delta_0(z)) \quad \text{for }\phi \in \s(\R^d),
\]
where $\delta_0$ denotes the unit Dirac mass at $z=0$. See the proof of
Theorem~\ref{thm: restriction} for an explanation of why the random
variable on the right-hand side is well-defined. More precisely, we will
show that the covariance kernel of $\left.h_\alpha\right|_{\R^d\times
  \{0\}}$ is that of an $\FGF_s(\R^d)$. It follows from continuity of
$\FGF_s(\R^d)$ (as a functional on $\s(\R^d)$) that this restriction can
be defined on a countable dense subset of $\s(\R^d)$ and continuously
extended to obtain a random tempered distribution.

\begin{proposition} \label{prop:restrictionRd} The restriction of
  $h_\alpha$ to $\R^d\times \{0\}$ is an $\FGF_s(\R^d)$, up to
  multiplicative constant.
\end{proposition}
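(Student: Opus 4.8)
The plan is to pin down the law of $\left.h_\alpha\right|_{\R^d\times\{0\}}$ by computing its covariance kernel and recognising it, via Theorem~\ref{KernelComp}(i) together with the Fourier formula \eqref{eqn:fourCor}, as a constant multiple of the $\FGF_s(\R^d)$ kernel; since both objects are centred Gaussian fields, equality of covariance structures forces equality in law up to that constant. First I would polarise the characteristic functional \eqref{eq:char_mod_GFF} to read off that for test functions $\psi_1,\psi_2$ one has $\Cov[(h_\alpha,\psi_1),(h_\alpha,\psi_2)]=\int_{\R^{d+1}}\wt\psi_1\,(-L_\alpha)^{-1}\wt\psi_2=\iint G_\alpha(p,q)\,\wt\psi_1(p)\,\wt\psi_2(q)\,dp\,dq$, where $G_\alpha$ is the symmetric Green's function of $-L_\alpha$ on $\R^{d+1}$ that vanishes at infinity (symmetry because $-L_\alpha$ is self-adjoint with respect to Lebesgue measure, being in divergence form). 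Applying this with $\psi_i(x,z)=\phi_i(x)\delta_0(z)$, which are already even in $z$ so that $\wt\psi_i=\psi_i$, collapses the $z$-integrals and shows that the covariance kernel of $\left.h_\alpha\right|_{\R^d\times\{0\}}$ is exactly $(x,y)\mapsto G_\alpha\big((x,0),(y,0)\big)$. So everything reduces to evaluating this diagonal slice of $G_\alpha$.

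Because $L_\alpha$ is translation invariant in $x$, we have $G_\alpha\big((x,0),(y,0)\big)=K(x-y)$ with $K(v):=G_\alpha\big((v,0),(0,0)\big)$, and I would compute $\widehat K$ via the partial Fourier transform in $\R^d$. Writing $\widehat G(\xi,z)$ for the $x$-transform of $z\mapsto G_\alpha((\cdot,z),(0,0))$, the relation $-L_\alpha G_\alpha=\delta_{(0,0)}$ becomes the one-dimensional degenerate ODE $|\xi|^2\widehat G(\xi,z)-\partial_z\!\big(|z|^\alpha\partial_z\widehat G(\xi,z)\big)=(2\pi)^{-d/2}\delta_0(z)$ with $\widehat G(\xi,\cdot)$ even and decaying. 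The decisive observation is a self-similarity: the ansatz $\widehat G(\xi,z)=|\xi|^{\mu}\,\Psi(|\xi|^{\gamma}z)$ with $\gamma=\tfrac{2}{2-\alpha}$ (so $\gamma(2-\alpha)=2$) satisfies the ODE precisely when $\mu=\gamma-2$ and $\Psi$ solves the unit-parameter problem $\Psi-\partial_t(|t|^\alpha\Psi')=(2\pi)^{-d/2}\delta_0$ on $\R$ (even, decaying). For the given value $\alpha=\tfrac{1-2s}{1-s}$ one has $2-\alpha=\tfrac1{1-s}$, hence $\gamma=2(1-s)$ and $\mu=\gamma-2=-2s$. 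Therefore $\widehat K(\xi)=\widehat G(\xi,0)=\Psi(0)\,|\xi|^{-2s}$.

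It then remains to verify that $\Psi(0)$ is a finite, strictly positive constant depending only on $d$ and $s$. Positivity is automatic since $(-L_\alpha)^{-1}$ is a positive operator (its quadratic form is $\int|\nablaa(\cdot)|^2\ge0$). For finiteness I would analyse the homogeneous equation $\Psi-\partial_t(|t|^\alpha\Psi')=0$ on $(0,\infty)$: the substitution $\tau=\gamma\,t^{1/\gamma}$ turns it into the modified Bessel equation $\Psi_{\tau\tau}+\tfrac{\alpha/(2-\alpha)}{\tau}\Psi_\tau-\Psi=0$, whose decaying solution is proportional to $\tau^{\nu}K_{\nu}(\tau)$ with $\nu=\tfrac{1-\alpha}{2-\alpha}$; using $1-\alpha=\tfrac{s}{1-s}$ and $2-\alpha=\tfrac1{1-s}$ this gives exactly $\nu=s$. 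Since $s>0$, $\tau^{s}K_{s}(\tau)\to 2^{s-1}\Gamma(s)\in(0,\infty)$ as $\tau\downarrow0$, so the decaying solution has a finite nonzero limit at $t=0$; matching the jump of $|t|^{\alpha}\Psi'$ across $t=0$ to the delta fixes the remaining multiplicative factor (one checks $|t|^{\alpha}\Psi'$ also has a finite nonzero one-sided limit, precisely because $\alpha+\tfrac{2s-1}{1-s}=0$), and $\Psi(0)\in(0,\infty)$ follows. This is where the particular choice of $\alpha$ is used, and I expect it to be the main obstacle: one has to show that the relevant solution of the degenerate ODE neither blows up nor vanishes at $z=0$, which is exactly the assertion $\nu=s>0$.

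Finally, since $d\ge2$ and $s\in(0,1)$ we have $0<s<d/2$, so \eqref{eqn:fourCor} is valid with test class $\s(\R^d)$ and gives $K(v)=\Psi(0)\,C(s,d)\,|v|^{2H}$ with $2H=2s-d$. By Theorem~\ref{KernelComp}(i), $K$ is thus $\Psi(0)$ times the $\FGF_s(\R^d)$ covariance kernel, so $\left.h_\alpha\right|_{\R^d\times\{0\}}$ has the covariance structure of $\sqrt{\Psi(0)}\cdot\FGF_s(\R^d)$ and, being a centred Gaussian field, equals it in distribution. The well-definedness of $(\left.h_\alpha\right|_{\R^d\times\{0\}},\phi)$ — that $\phi\otimes\delta_0$ lies in the $(-L_\alpha)^{-1}$-closure of $\s_{\text{sym}}(\R^{d+1})$, with squared norm $\Psi(0)\,\|\phi\|_{\dot H^{-s}(\R^d)}^2<\infty$ for every $\phi\in\s(\R^d)$ — is then obtained by mollifying $\delta_0(z)$ in the $z$-variable and checking the Cauchy property in this norm via the covariance formula just derived, exactly as $\phi^\uparrow$ was handled in the proof of Theorem~\ref{thm: restriction}.
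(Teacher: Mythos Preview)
Your argument is correct, but it follows a genuinely different route from the paper's. The paper proves the proposition probabilistically: it realises $L_\alpha$ as the generator of a diffusion $X=(B,Z)$ in $\R^{d+1}$ whose last coordinate is (a time change of) a $2(1-s)$-dimensional Bessel process, and then invokes the Molchanov--Ostrovskii subordination: the trace of $X$ on $\{z=0\}$, obtained via the inverse local time of $Z$ at $0$, is a $2s$-stable L\'evy process in $\R^d$. The restriction of $G_\alpha$ to $\{z=0\}$ is therefore the Green's function of that stable process, which is the $\FGF_s(\R^d)$ kernel. Your approach is instead purely analytic: the partial Fourier transform in $x$ and the scaling ansatz pin down the $|\xi|^{-2s}$ behaviour directly, and the ODE in $z$ is solved by $\tau^s K_s(\tau)$---exactly the same ODE and solution the paper uses later, in the proof of Proposition~\ref{prop:domain2}. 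So your method is essentially the analytic counterpart of the paper's argument for $\EFGF_s(D)$, transplanted to the whole space.

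What each approach buys: yours is more self-contained and avoids local time, Bessel processes, and the subordination literature; it also makes the choice $\alpha=\tfrac{1-2s}{1-s}$ transparent via the computation $\nu=(1-\alpha)/(2-\alpha)=s$. The paper's approach, on the other hand, exposes the probabilistic structure (the FGF on $\R^d$ as a trace of a degenerate-diffusion GFF) and connects directly to the stable-process potential theory already used in Section~\ref{sec:domain}. One minor point: your remark that ``positivity is automatic since $(-L_\alpha)^{-1}$ is a positive operator'' only gives positive \emph{semidefiniteness} of the covariance, not pointwise positivity of $\Psi(0)$; but the Bessel formula $\tau^s K_s(\tau)\to 2^{s-1}\Gamma(s)>0$ that you derive immediately afterwards settles this anyway.
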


\begin{proof}
  Let $(X_t)_{t \geq 0}$ be a diffusion in $\R^{d+1}$ with a standard
  Brownian motion $B$ in the first $d$ coordinates and the process
  $Z_t\colonequals\left(\frac{\sqrt{2}}{\delta}Y_t\right)^{\delta}$ in the
  last coordinate, where $\delta=2(1-s)$ and $Y$ is $\delta$-dimensional
  Bessel process reflected symmetrically at $0$. An application of It\=o's
  formula reveals that $L_\alpha$ is the generator of $X$. We define the
  Green's function
  \[
  G_\alpha(x_1,x_2) \colonequals \lim_{\eps \to 0} (2\eps)^{-d-1}
  \E^{x_1}\left[\int_0^\infty \one_{\{X_t \in Q(x_2,\eps)\}}\,dt\right],
  \quad x_1,x_2\in \R^{d+1},
  \]
  where $Q(x,\eps) \colonequals \{y\in \R^{d+1}\,:|x_k - y_k|<\eps\text{
    for all }1\leq k \leq d+1\}$.  
  Since $d+1\geq 3$ implies that $X$ is transient, the limit on the
  right-hand side is well-defined---the proof is similar to the proof for
  the case $\alpha = 0$ \cite[Section~3.3]{morters2010brownian}. Since
  $L_\alpha$ is the generator of $X$, we have
  \[
  ((-L_\alpha)^{-1} \phi)(x_1) = \int_{\R^{d+1}}
  G_\alpha(x_1,x_2)\,\phi(x_2)\,dx_2 
  \]  
  for all $\phi \in \s_{\text{sym}}(\R^{d+1})$ (see Chapter~II in
  \cite{bass1998diffusions}).  Therefore,
  \[
  \E[(h_\alpha,\phi)^2] = \int_{\R^{d+1}}\int_{\R^{d+1}}
  G_\alpha(x_1,x_2)\,\phi(x_1)\phi(x_2)\,dx_1\,dx_2.
  \]
                  We denote by $(\ell_s)_{s\geq 0}$ the local time of $Z$ at 0 and by
  $(\tau_t)_{t\geq 0}$ the inverse function of $\ell$
  \cite{revuz1999continuous}. Then $(X_{\tau_t})_{t\geq 0}$ is a
  $2s$-stable L\'evy process in $\R^d$, and the integral $\int_0^s
  \tfrac{1}{2\eps}\one_{\{Z_u \in (-\eps,\eps)\}}\,du$ converges almost
  surely to $\ell_s$ as $\eps \to 0$
  \cite{molchanov1969symmetric}. Therefore, the Green's function of the a
  $2s$-stable L\'evy process in $\R^d$ evaluated at $x_1,x_2\in \R^d\times
  \{0\}$ equals 
  \begin{align*}
    \lefteqn{\lim_{\eps \to 0}(2\eps)^{-d}\E^{x_1}\left[\int_0^\infty \one_{\{B_{\tau_t}\in
          Q(x_2,\eps)\}} \, dt\right]}
    \\    
    &=\lim_{\eps \to 0} (2\eps)^{-d}\E^{x_1}\left[\int_0^\infty \one_{\{B_s\in
        Q(x_2,\eps)\}} \frac{d\ell_s}{ds}\,ds \right]
    \\
    &= \lim_{\eps \to 0} (2\eps)^{-d-1}\E^{x_1}\left[\int_0^\infty \one_{\{B_s\in
        Q(x_2,\eps)\}}\one_{\{Z_s\in
        Q(0,\eps)\}}ds\right] = G_\alpha(x_1,x_2). 
  \end{align*}
  In other words, the restriction to $\{z=0\}$ of the Green's function of
  $X$ is equal to the Green's function of a $2s$-stable L\'evy process in
  $\R^d$. The latter is proportional to $|x_1-x_2|^{2s-d}$
  \cite[(1.1)]{chen1998estimates}, and the covariance kernel of
  $\FGF_s(\R^d)$ is also proportional to $|x_1-x_2|^{2s-d}$ by
  Theorem~\ref{KernelComp}. Since the law of a centered Gaussian process is
  determined by its covariance kernel, this concludes the proof.
      \end{proof}

\begin{figure}[ht]
  \center
 \includegraphics[width=5in]{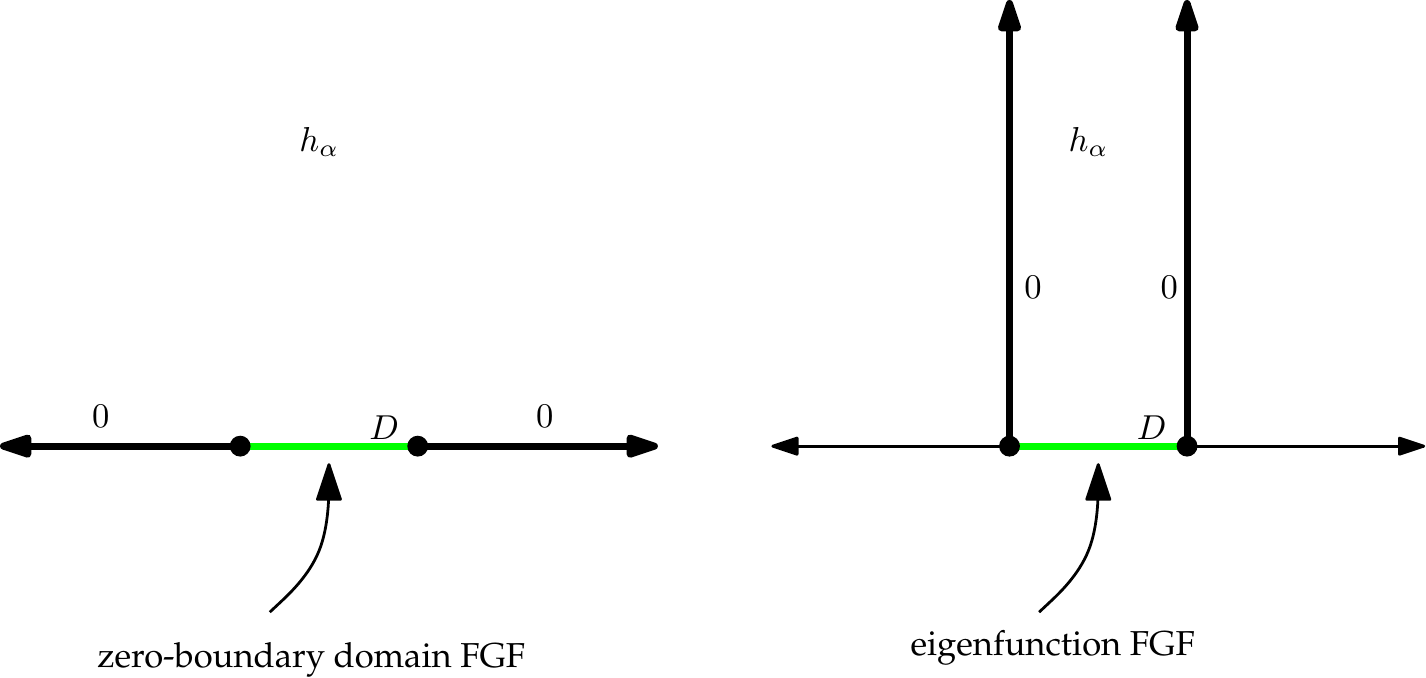}
 \caption{\label{fig:eigenfunctionfgf} Propositions~\ref{prop:domain1} and
   \ref{prop:domain2} describe the relationship between $\FGF_s(D)$ and
   $\EFGF_s(D)$. We obtain an $\FGF_s(D)$ by subtracting from $h_\alpha$ its
   conditional expectation given its values on $(\R^d\setminus D)\times
   \{0\}$ and restricting to $D\times \{0\}$, and we obtain an $\EFGF_s(D)$
   by subtracting from $h_\alpha$ its conditional expectation given its
   values on $\partial D\times \R$ and restricting to $D\times \{0\}$.}
\end{figure}

In Propositions~\ref{prop:domain1} and \ref{prop:domain2} below, we discuss
projections of $h_\alpha$ onto certain subdomains of $\R^{d+1}$. These
projections are analogous to those discussed for the FGF in
Section~\ref{sec:projection}. We state these propositions using terminology
described in Remark~\ref{rem:cond_exp}, to which we refer the reader for a
rigorous interpretation. 

\begin{proposition} \label{prop:domain1} Let $D\subset \R^d$ and define
  $h_D$ to be the restriction to $D\times \{0\}$ of $h_\alpha$ minus the
  conditional expectation of $h_\alpha$ given its values on $(\R^d\setminus
  D)\times \{0\}$. Then $h_D \sim \FGF_s(D)$, up to multiplicative
  constant.
\end{proposition}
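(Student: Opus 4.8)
The plan is to reduce the statement to the projection theory of Section~\ref{sec:projection} by way of Proposition~\ref{prop:restrictionRd}. Set $h\colonequals\left.h_\alpha\right|_{\R^d\times\{0\}}$; by Proposition~\ref{prop:restrictionRd}, $h$ equals a constant $c=c(d,s)$ times an $\FGF_s(\R^d)$. Since $s\in(0,1)$ and $d\geq 2$, the Hurst parameter $H=s-\tfrac{d}{2}$ is negative, hence not a nonnegative integer, so Lemma~\ref{lem:allowable-domain} shows that the bounded (in particular proper) domain $D$ is allowable. Thus Section~\ref{sec:projection} applies to $h$ and produces the decomposition $h=P_Dh+P_D^{\Har}h$ into independent summands with $P_Dh\sim c\cdot\FGF_s(D)$, and, by Remark~\ref{rem:cond_exp}, $P_D^{\Har}h$ is the conditional expectation of $h$ given its values on $\R^d\setminus D$. (Linearity of $P_D$ is what lets the multiplicative constant pass through.)

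The next step is to identify the conditioning in the statement with this one and to see that the final restriction is harmless. Conditioning $h_\alpha$ on its values on $(\R^d\setminus D)\times\{0\}$ means conditioning on the $\sigma$-algebra generated by the variables $(h_\alpha,\phi\otimes\delta_0)=(h,\phi)$ with $\phi\in\s(\R^d)$ supported off $D$; that $\phi\otimes\delta_0$ is an admissible test function for $h_\alpha$ is exactly the content of the opening paragraph of the proof of Theorem~\ref{thm: restriction} (mollify $\delta_0$ in the $z$-variable and pass to the $L^2(\Omega)$-limit), and the same mollification shows restriction to $\R^d\times\{0\}$ commutes with this conditional expectation. Hence $h_\alpha-\E[h_\alpha\mid\text{values on }(\R^d\setminus D)\times\{0\}]$, restricted to $\R^d\times\{0\}$, equals $h-P_D^{\Har}h=P_Dh$, which is supported on $\overline D$; restricting further to $D\times\{0\}$ does not change the law, so $h_D\sim c\cdot\FGF_s(D)$.

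I expect the test-function bookkeeping to be the only delicate point: one must confirm that $(h_\alpha,\phi\otimes\delta_0)$ is a bona fide random variable, that conditional expectation commutes with the $L^2(\Omega)$-limit defining it, and that the ``values on $\R^d\setminus D$'' conditioning used in Section~\ref{sec:projection} agrees with the one arising here. A more elegant but less self-contained alternative bypasses this by working with covariance kernels: the proof of Proposition~\ref{prop:restrictionRd} identifies the covariance kernel of $h$ with a multiple of the Green's function of the isotropic $2s$-stable L\'evy process in $\R^d$, and conditioning a centered Gaussian field whose covariance is such a Markov Green's function on its values outside $D$ replaces the kernel by the Green's function of the process killed on exiting $D$; by \eqref{eq:green01} this killed Green's function is precisely the $\FGF_s(D)$ covariance kernel $G^s_D$, and a centered Gaussian field is determined by its covariance.
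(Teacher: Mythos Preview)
Your proof is correct and follows the same approach as the paper: reduce to Proposition~\ref{prop:restrictionRd} to identify $\left.h_\alpha\right|_{\R^d\times\{0\}}$ with a constant multiple of $\FGF_s(\R^d)$, then invoke the projection/conditional-expectation theory of Section~\ref{sec:projection}. The paper's proof is a two-line version of exactly this argument, so your additional care with allowability, the identification of the two $\sigma$-algebras, and the commutation of restriction with conditional expectation simply makes explicit what the paper leaves implicit.
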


\begin{proof}
  This result follows immediately from Proposition~\ref{prop:restrictionRd}
  and the fact that $h\sim \FGF_s(\R^d)$ minus its conditional expectation
  given its values on $\R^d \setminus D$ has the law of an $\FGF_s(D)$. 
\end{proof}

\begin{proposition} \label{prop:domain2} Let $D\subset \R^d$ and define
  $\wt{h}_D$ to be the restriction to $D\times \{0\}$ of $h_\alpha$ minus
  the conditional expectation of $h_\alpha$ given its values on $\partial D
  \times \R$. Then $\wt{h}_D \sim \EFGF_s(D)$, up to multiplicative
  constant.
\end{proposition}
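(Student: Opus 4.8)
The plan is to combine the Gibbs--Markov property of $h_\alpha$ with the time-change argument from the proof of Proposition~\ref{prop:restrictionRd}, and then to match covariance kernels with $\EFGF_s(D)$. The hypersurface $\partial D\times\R$ separates $\R^{d+1}$ into the two open sets $D\times\R$ and $(\R^d\setminus\overline{D})\times\R$, and since $L_\alpha$ is a local operator (equivalently, the diffusion $X$ of Proposition~\ref{prop:restrictionRd} is a strong Markov process), conditioning $h_\alpha$ on its values on $\partial D\times\R$ and subtracting this conditional expectation leaves the sum of two \emph{independent} zero-boundary fields, one on each of those cylinders --- exactly as in the domain Markov property of the GFF and as in Section~\ref{sec:projection} for $\FGF_s(\R^d)$. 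Restricting to $D\times\{0\}$ annihilates the piece supported in $(\R^d\setminus\overline{D})\times\R$, so $\wt{h}_D$ is the restriction to $D\times\{0\}$ of the zero-boundary field on $D\times\R$, whose covariance kernel is the Green's function $G_\alpha^{D\times\R}$ of $L_\alpha$ on $D\times\R$ with vanishing data on $\partial D\times\R$; equivalently, the Green's function of $X$ killed at the time $\sigma_D$ its $\R^d$-coordinate first exits $D$ (finite off the diagonal since it is dominated by $G_\alpha$). That the restriction to $D\times\{0\}$ is well defined follows, as in Theorem~\ref{thm: restriction} and Proposition~\ref{prop:restrictionRd}, from the fact that $(x,z)\mapsto\phi(x)\delta_0(z)$ lies in the relevant Hilbert space and the kernel is continuous off the diagonal. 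So it suffices to show that $G_\alpha^{D\times\R}\big((x_1,0),(x_2,0)\big)$ is, up to a multiplicative constant, the covariance kernel of $\EFGF_s(D)$.

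From the definition $(h,\phi)=\sum_n Z_n\lambda_n^{-s/2}(f_n,\phi)$ one reads off that the covariance kernel of $\EFGF_s(D)$ is $\sum_n\lambda_n^{-s}f_n(x)f_n(y)$, i.e.\ the kernel of $\big((-\Delta)_D^s\big)^{-1}$. To identify $G_\alpha^{D\times\R}$ on the slice $\{z=0\}$ I would repeat the occupation-measure/local-time computation from the proof of Proposition~\ref{prop:restrictionRd}, now applied to the killed diffusion. Writing $X=(B,Z)$ with $B$ a Brownian motion in $\R^d$ independent of $Z$, and letting $(\tau_u)_{u\ge0}$ be the inverse of the local time of $Z$ at $0$ --- an $s$-stable subordinator, since $u\mapsto B_{\tau_u}$ is the $2s$-stable process of Proposition~\ref{prop:restrictionRd} --- the same argument shows that the restriction of $G_\alpha^{D\times\R}$ to $\{z=0\}$ equals, up to the constant from the local-time normalization, the Green's function of the process $u\mapsto B_{\tau_u}$ killed at clock-time $\inf\{u:\tau_u\ge\sigma_D\}$. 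That process is precisely Brownian motion in $D$ killed on exiting $D$ and then subordinated by the $s$-stable subordinator $\tau$, whose generator is $-(-\Delta)_D^s$ and whose Green's function is therefore $\sum_n\lambda_n^{-s}f_n(x)f_n(y)$. Comparing with the previous paragraph, $\wt{h}_D$ and $\EFGF_s(D)$ are centered Gaussians with proportional covariance kernels, hence equal in law up to a multiplicative constant.

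A purely analytic route to the same identification is to expand $G_\alpha^{D\times\R}$ in the Dirichlet eigenbasis $\{f_n\}$ of $D$, reducing the $z$-variable to the one-dimensional Green's function $g_n$ of $\lambda_n-\partial_z(|z|^\alpha\partial_z)$ on $\R$; the rescaling $z\mapsto\lambda_n^{-1/(2-\alpha)}z$ gives $g_n(0,0)=\lambda_n^{-(1-\alpha)/(2-\alpha)}g(0)$ where $g$ is the corresponding object for $\lambda=1$, and the defining choice $\alpha=\tfrac{1-2s}{1-s}$ makes the exponent $\tfrac{1-\alpha}{2-\alpha}$ equal to $s$, so once more $G_\alpha^{D\times\R}\big((x_1,0),(x_2,0)\big)=g(0)\sum_n\lambda_n^{-s}f_n(x_1)f_n(x_2)$. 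This is the Gaussian shadow of the extension-problem characterization of $(-\Delta)_D^s$ in \cite{cabre2010positive,capella2011regularity}, and one could simply quote those results (in the normalization of $L_\alpha$ used here) in place of the computation.

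The step I expect to be the main obstacle is the identification of the killed-diffusion Green's function on the slice. One must verify that the killing rule for $X$ --- exit of the $\R^d$-coordinate from $D$ --- transports under the time change into the killing rule of \emph{subordinate killed} Brownian motion, rather than into that of the $2s$-stable process killed on exiting $D$ (the latter would yield the $\FGF_s(D)$ kernel of \eqref{eq:green01}, not $\EFGF_s(D)$). This rests on the independence of $B$ and $Z$ and on $\tau$ being a genuine $s$-stable subordinator, so that ``$B$ exits $D$'' is read in the new clock as ``$\tau_u$ exceeds the independent time $\sigma_D$''. In the analytic route the analogous difficulty is justifying the eigenfunction expansion of $G_\alpha^{D\times\R}$ and the interchange of summation with the distributional evaluation at $z=0$ (for which $\alpha<1$, ensuring that the one-dimensional solutions stay bounded at the origin, is exactly what is needed).
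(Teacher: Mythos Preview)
Your proposal is correct, and you offer two valid arguments. Your analytic route is essentially the paper's approach: the paper also separates variables in the Dirichlet eigenbasis of $D$, reducing to the one-dimensional operator $L_{\alpha,\lambda}=-\lambda+\partial_z(|z|^\alpha\partial_z)$ on $\R$, but rather than your scaling argument it solves the ODE explicitly in terms of the modified Bessel function $K_s$ and extracts the $\lambda^{-s}$ factor from the small-argument expansion $K_s(t)=2^{s-1}\Gamma(s)t^{-s}+\cdots$. Your scaling computation (checking that $(1-\alpha)/(2-\alpha)=s$ under $\alpha=\tfrac{1-2s}{1-s}$) is cleaner and avoids special functions entirely. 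Your probabilistic route---identifying the slice Green's function of $G_\alpha^{D\times\R}$ as that of \emph{subordinate killed} Brownian motion via the inverse-local-time change---is genuinely different; the paper does not use subordination for this proposition at all. Your explicit distinction between subordinate-killed and killed-subordinate Brownian motion (the latter yielding the regional $\FGF_s(D)$ kernel \eqref{eq:green01} instead) is exactly the crux, and your justification via the independence of $B$ and $Z$ is the right one: the killing time $\sigma_D$ depends only on $B$, so in the new clock the process is $B^D$ evaluated at the independent subordinator times $\tau_u$, with generator $-(-\Delta)_D^s$ and Green's kernel $\sum_n\lambda_n^{-s}f_n(x)f_n(y)$.
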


\begin{proof}
  By the definition of the eigenfunction FGF, it suffices to show that if
  $f_1$ and $f_2$ are $L^2(D)$-normalized eigenfunctions of the
  Dirichlet Laplacian on $D$ with eigenvalues $\lambda_1$ and $\lambda_2$,
  then
  \[
  \E[(\wt{h}_D,f_1)(\wt{h}_D,f_2)]=C \lambda_1^{-s} \one_{\{\lambda_1=\lambda_2\}}
  \]
  for some constant $C$. (We will use $C$ to denote a generic constant
  whose value may change throughout the proof.)

  It is straightforward to verify that $h_\alpha$ minus the conditional
  expectation of $h_\alpha$ given its values on $\partial D \times \R$ is
  equal in law to the field $h^{\text{cyl}}_\alpha$ whose covariance kernel
  is given by the Green's function of the diffusion $X$ defined in the
  proof of Proposition~\ref{prop:restrictionRd} stopped upon hitting the
  cylinder $\partial D \times \R$. Equivalently, the covariances of
  $h^{\text{cyl}}_\alpha$ are given in terms of the inverse $L_\alpha^{-1}$ of the operator
  $L_\alpha$ with zero boundary conditions on $\partial D \times \R$ via
  $\E[(h^{\text{cyl}}_\alpha,\phi)^2] = \int_{D\times \R} \phi
  \wt{L}_\alpha^{-1}\phi$.

  Let $w_\lambda(z)$ be the function on $\R$ which satisfies
  $w_\lambda(0)=1$, $w_\lambda(\infty) = 0$,
  \[
  -\lambda w_\lambda(z) + \frac{\partial}{\partial
    z}\left(z^\alpha\frac{\partial w_\lambda }{\partial z}\right) = 0 \quad
  \text{ for all }z\in (0,\infty),
  \]
  and $w_\lambda(z)=w_\lambda(-z)$ for all $z \in \R$. A symbolic ODE
  solver may be used to express $w_\lambda$ in terms of the modified Bessel
  function of the second kind $K_s$ as
  \[
  w_\lambda(z) = C \lambda ^{s/2} z^{s/(2-2 s)} K_s\left(2 (1-s) z^{\frac{1}{2-2 s}}
   \sqrt{\lambda }\right).
  \]
  We define the operator $L_{\alpha,\lambda} = -\lambda +
  \frac{\partial}{\partial z}\left(z^\alpha\frac{\partial }{\partial
      z}\right)$. Integration by parts reveals that for all $\phi \in
  C_c^\infty(\R)$, we have
  \begin{equation} \label{eq:delta0}
    (-L_{\alpha,\lambda}w_\lambda,\phi)\colonequals
    (w_\lambda,-L_{\alpha,\lambda}\phi) = \lim_{z\to 0}z^\alpha
    w_\lambda'(z)\phi(z) = C \lambda^{s} \phi(0)
  \end{equation}
  for some constant $C$, where in the last step we have used the expansion
  \begin{align*}
    K_s(t) &= 2^{s-1} \Gamma (s) t^{-s} + 2^{-s-1} \Gamma (-s)t^s
      -\frac{2^{s-3} \Gamma (s) t^{2-s}}{s-1} + O(t^{2+s})
  \end{align*}
  as $t\to 0^+$. We may restate \eqref{eq:delta0} by writing $\delta_0 =
  C\lambda^{-s} (-L_{\alpha,\lambda})w_\lambda$, where $\delta_0$ denotes
  the unit Dirac mass at the origin. Therefore, using the relation
      \[
  (\wt{h}_D,\phi)\colonequals \lim_{k\to\infty}({h}_\alpha^{\text{cyl}},(x,z)\mapsto
  \psi(x)\eta_k(z)),
  \]
  where $\{\eta_k\}_{n\in \N}$ is an approximation
  to the identity, we have
  \begin{align*}
    \lefteqn{\E[(\wt{h}_D,f_1)(\wt{h}_D,f_2)]} \\ &= C
    \lambda_1^{-s}\lambda_2^{-s} \int_{D\times \R} f_1(x)
    (L_{\alpha,{\lambda_1}} w_{\lambda_1})(z)
    (-L_\alpha)^{-1} [f_2(x) L_{\alpha,\lambda_2} w_{\lambda_2}(z)] \,dx
    \,dz \\
    &= C
    \lambda_1^{-s}\lambda_2^{-s} \int_{D\times \R} f_1(x)
    (L_{\alpha,{\lambda_1}} w_{\lambda_1})(z)
    f_2(x) w_{\lambda_2}(z) \,dx \,dz \\
    &= C\lambda_1^{-s}\lambda_2^{-s}\left(\int_D f_1(x)f_2(x) dx\right)
    \left(\int_\R
      (L_{\alpha,\lambda_1} w_{\lambda_1})(z) w_{\lambda_2}(z)  dz\right) \\
    &= C \, \one_{\{\lambda_1=\lambda_2}\}\lambda_1^{-2s}\lambda_1^s = C \,
    \one_{\{\lambda_1=\lambda_2\}}\lambda_1^{-s},
  \end{align*}
  as desired.
                                                                              \end{proof}

\section{FGF local sets}\label{sec:localset}
\makeatletter{}\subsection{FGF with Boundary Values}
In Section~\ref{sec:domain}, we defined the FGF on a domain with zero
boundary conditions. It is also natural to consider other boundary
conditions to give rigorous meaning to the idea that the conditional law of
the FGF in $D$ given the values of $h$ outside $D$ is an FGF on $D$ with
boundary value $\left. h \right|_{\R^d\setminus D}$. For simplicity, we
only consider the case where $D$ is bounded and the boundary values are
Schwartz.

\begin{definition}\label{def:boundary FGF}
  Given a bounded domain $D$ and a Schwartz function $f$ which is
  $s$-harmonic in $D$, the random distribution $f+h_D$ is called the FGF on
  $D$ with boundary values $\left.f\right|_{\R^d\setminus D}$.
\end{definition}

\subsection{Local Sets of the FGF on a Bounded Domain}
The concept of a \emph{local set} of the Gaussian free field is developed
in \cite{schramm2010contour}. It turns out to be an important concept and
tool in the study of couplings between the GFF and random closed sets such
as SLE (\cite{schramm2010contour}, \cite{millerimaginary1},
\cite{millerimaginary2}, \cite{millerimaginary3}, \cite{millerimaginary4}).
The theory of local sets of the Gaussian free field carries over to the FGF
setting with minimal modification. 

Let $\Gamma_D$ be the space of all closed non-empty subsets of
$\overline{D}$.  We endow $\Gamma$ with the Hausdorff metric induced by
Euclidean distance: the distance between sets $S_1, S_2 \in \Gamma$ is
 \[
 d_{\text{Haus}}(S_1,S_2):=\max\Bigl \{ \sup_{x \in S_1} \text{dist}(x, S_2), \sup_{y
   \in S_2} \text{dist}(y, S_1)\Bigr \},
\] 
where $\text{dist}(x,S)\colonequals \inf_{y\in S}|x-y|$.  Note that
$\Gamma$ is naturally equipped with the Borel $\sigma$-algebra induced by
this metric. Furthermore, $\Gamma_D$ is a compact metric space \cite[pp.\
280-281]{Munkres1999}. Note that the elements of $\Gamma$ are themselves
compact.
 
Given $A \subset \Gamma$, let $A_\delta$ denote the closed set containing
all points in $\Gamma$ whose distance from $A$ is at most $\delta$.  Let
$\mathcal A_\delta$ be the smallest $\sigma$-algebra in which $A$ and the
restriction of $h$ (as a distribution) to the interior of $A_\delta$ are
measurable.  Let $\mathcal A = \bigcap_{\delta \in \mathbb Q, \delta > 0}
\mathcal A_\delta$.  Intuitively, this is the smallest $\sigma$-algebra in
which $A$ and the values of $h$ in an infinitesimal neighbourhood of $A$
are measurable.
 
Given a random closed set $A\subset D$ and deterministic open subset
$B\subset D$, we define the event $S=\{A \cap B = \emptyset\}$ and the
random set $\tilde{A} \colonequals A$ if $S$ occurs and $\emptyset$
otherwise.
 
 \begin{lemma} \label{l.localdefequivalence}
 Let $D$ be a bounded domain, suppose that $(h, A)$ is
 a random variable which is a coupling of an instance $h$ of the FGF with
 a random element $A$ of $\Gamma$.  Then the following are equivalent:
   \begin{enumerate}[label=(\roman*)]
 \item \label{i.Bcond} For each deterministic open $B \subset D$, the event
   $A \cap B = \emptyset$ is conditionally independent, given the projection
   of $h$ onto $\Har_s(B)$, of the projection of $h$ onto $\dH^s_0(B)$.  In
   other words, the conditional probability that $A \cap B = \emptyset$
   given $h$ is a measurable function of the projection of $h$ onto
   $\Har_s(B)$.
 
\item \label{i.B2cond} For each deterministic open $B \subset D$, we have
  that {\em given} the projection of $h$ onto $\Har_s(B)$, the pair $(S,
  \tilde A)$ is independent of the projection of $h$ onto $\dH^s_0(B)$.
 
 \item \label{i.Acond} Conditioned on $\mathcal A$, (a regular
 version of) the conditional law of $h$ is that of $h_1 + h_2$ where
 $h_2$ is a zero boundary FGF on $D\setminus A$
 (extended to all of $D$ by setting $ h_{1}|_A =0 $  )
 and $h_1$ is an $\mathcal A$-measurable random distribution
 (i.e., as a distribution-valued function on
 the space of distribution-set pairs $(h, A)$, $h_1$ is $\mathcal A$-measurable)
 which is almost surely $s$-harmonic on $D \setminus A$.
 \item \label{i.twostep} A sample with the law of $(h,A)$ can be produced as
 follows.
   First choose the pair $(h_1, A)$
 according to some law where $h_1$ is almost surely $ s $-harmonic on $D \setminus A$.  Then
 sample an instance $h_2$ of zero boundary FGF on $D \setminus A$ and set $h=h_1+h_2$.
 \end{enumerate}
 \end{lemma}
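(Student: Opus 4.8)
The plan is to adapt the proof of the analogous statement for the Gaussian free field, Lemma~3.9 of \cite{schramm2010contour}; the argument transfers essentially verbatim because the two structural facts it relies on are already available here. Namely, for a deterministic open $B\subset D$ we have the orthogonal decomposition $\dH^s(\R^d)=\Har_s(B)\oplus\dH^s_0(B)$ of Proposition~\ref{prop:directsum}, and the projections $P_B h$ and $P^{\Har}_B h$ of an $\FGF_s$ are independent, as discussed in Section~\ref{sec:projection}. I would establish the four conditions equivalent by proving the cycle \ref{i.Bcond}$\Leftrightarrow$\ref{i.B2cond}, \ref{i.B2cond}$\Rightarrow$\ref{i.Acond}$\Rightarrow$\ref{i.twostep}$\Rightarrow$\ref{i.Bcond}.

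The implications \ref{i.B2cond}$\Rightarrow$\ref{i.Bcond} (forget $\tilde A$) and \ref{i.Acond}$\Rightarrow$\ref{i.twostep} (sample $(h_1,A)$ from the $\mathcal A$-marginal, then an independent zero-boundary $\FGF_s$ on $D\setminus A$) are immediate. For \ref{i.Bcond}$\Rightarrow$\ref{i.B2cond}, recall that $(S,\tilde A)$ is a function of $h$ with $\tilde A=A$ on $S$ and $\tilde A=\emptyset$ on $S^c$; one checks that the conditional law of $(S,\tilde A)$ given $P^{\Har}_B h$ is unchanged upon also conditioning on $P_B h$ --- this is what \ref{i.Bcond} asserts for $\one_S$, and on $S$ the set $A$ carries no further information about $P_B h$. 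For \ref{i.twostep}$\Rightarrow$\ref{i.Bcond}, fix deterministic open $B$ and condition on $(h_1,A)$ and on $\{A\cap B=\emptyset\}$: since $h_1$ is $s$-harmonic in $D\setminus A\supset B$ we have $P_B h_1=0$, so $P_B h=P_B h_2$, and the latter is a zero-boundary $\FGF_s(B)$ whose law does not depend on $A$ (using the nesting of zero-boundary fields on nested domains from Section~\ref{sec:projection}); re-organising the conditioning yields exactly the conditional independence in \ref{i.Bcond}.

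The substantive implication is \ref{i.B2cond}$\Rightarrow$\ref{i.Acond}, which I would prove by discretisation. Let $A^n$ be the union of the closed dyadic cubes of side $2^{-n}$ meeting $A$, so that $A^n\downarrow A$ in the Hausdorff metric and $D\setminus A^n\uparrow D\setminus A$. For fixed $n$, applying \ref{i.B2cond} with $B$ ranging over finite unions of such cubes disjoint from a candidate value of $A^n$, together with the direct-sum decomposition for these $B$, yields a splitting $h=h_1^n+h_2^n$ in which $h_2^n$ is a zero-boundary $\FGF_s$ on $D\setminus A^n$ independent of $\sigma(A^n,\, h|_{\interior A^n})$, while $h_1^n=h-h_2^n$ is $\sigma(A^n,\, h|_{\interior A^n})$-measurable and $s$-harmonic in $D\setminus A^n$. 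Letting $n\to\infty$ and testing against a fixed $\phi\in C_c^\infty(D\setminus A)$, the fields $h_2^n$ converge to a zero-boundary $\FGF_s(D\setminus A)$ by the nesting coupling of zero-boundary fields on increasing domains, and $h_1=h-h_2$ is $\mathcal A$-measurable and, by Theorem~\ref{regularity} (interior smoothness of the harmonic part), $s$-harmonic on $D\setminus A$. Extending $h_2$ by zero on $A$ gives \ref{i.Acond}.

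The main obstacle is the limit in this last step: one must justify that the zero-boundary fields on the exhausting domains $D\setminus A^n$ converge, as random tempered distributions, to a single zero-boundary $\FGF_s(D\setminus A)$ --- a tightness and uniform-integrability argument, using $\E[(h_2^n,\phi)^2]\le\E[(h,\phi)^2]$ together with monotone convergence of the relevant $\dH^{-s}$ norms --- and that the limiting harmonic part is genuinely $s$-harmonic on all of $D\setminus A$, not merely on each $D\setminus A^n$. A secondary, FGF-specific point is that for $s\notin\N$ the operator $(-\Delta)^s$ is non-local, so there is no honest boundary trace and ``the values of $h$ near $A$'' must be interpreted distributionally throughout (via test functions supported in neighbourhoods of $A$); this is precisely why $\mathcal A$ is defined through the $A_\delta$ and $\mathcal A_\delta$, and it is the reason the measurability assertions in \ref{i.Acond} and \ref{i.twostep} are phrased as statements about the distribution $h$ rather than about a restriction to $A$.
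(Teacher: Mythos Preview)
Your proposal is correct and is exactly the approach the paper takes: the paper does not give a self-contained proof but simply states that Lemma~\ref{l.localdefequivalence} ``may be proved by making minor modifications to the proof of Lemma~3.9 in \cite{schramm2010contour} to generalize from the setting $s=1$, $d=2$ to arbitrary $s\in\R$ and $d\ge 1$.'' Your sketch supplies precisely those modifications, correctly isolating the two ingredients (the orthogonal decomposition of Proposition~\ref{prop:directsum} and the independence of $P_Bh$ and $P^{\Har}_Bh$ from Section~\ref{sec:projection}) that make the Schramm--Sheffield argument go through in the FGF setting.
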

 
 Lemma \ref{l.localdefequivalence} may be proved by making minor
 modifications to the proof of Lemma 3.9 in \cite{schramm2010contour} to
 generalize from the setting $s=1, d=2$ to arbitrary $s\in \R$ and $d\geq
 1$.
 
 We say a random closed set $A$ coupled with an instance $h$ of the FGF, is
 \textbf{local} if one of the equivalent conditions in Lemma
 \ref{l.localdefequivalence} holds.  For any coupling of $A$ and $h$, we
 use the notation $C_A$ to describe the conditional expectation of the
 distribution $h$ given $\mathcal{A}$.  When $A$ is local, $C_A$ is the
 distribution $h_1$ described in \ref{i.Acond} above.

 Given two distinct random sets $A_1$ and $A_2$ (each coupled with a FGF
 $h$), we can construct a coupling $(h, A_1, A_2)$ such that the marginal
 law of $(h, A_i)$ (for $i \in \{1,2\}$) is the given one, and conditioned
 on $h$, the sets $A_1$ and $A_2$ are independent of one another.  This can
 be done by first sampling $h$ and then sampling $A_1$ and $A_2$
 independently from the regular conditional probabilities.  The union of
 $A_1$ and $A_2$ is then a new random set coupled with $h$.  We denote this
 new random set by $A_1 \check{\cup} A_2$ and refer to it as the {\textbf
   conditionally independent union} of $A_1$ and $A_2$. The following lemma
 is analogous to \cite[Lemma~3.6]{schramm2010contour},
 
 \begin{lemma} \label{continuumlocalunion} If $A_1$ and $A_2$ are local
   sets coupled with the GFF $h$ on $D$, then their conditionally
   independent union $A = A_1 \check{\cup} A_2$ is also local. Moreover,
   given $\mathcal A$ and the pair $(A_1, A_2)$, the conditional law of $h$
   is given by $C_A$ plus an instance of the FGF on $D \setminus
   A$.
 \end{lemma}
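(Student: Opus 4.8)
The plan is to verify directly that $A = A_1 \check{\cup} A_2$ satisfies characterization~\ref{i.Bcond} of Lemma~\ref{l.localdefequivalence}, and then to read off the conditional-law decomposition by peeling off $A_1$ and $A_2$ one at a time. For locality, fix a deterministic open set $B\subset D$. Since $\{A\cap B=\emptyset\}=\{A_1\cap B=\emptyset\}\cap\{A_2\cap B=\emptyset\}$ and, by the construction of the conditionally independent union, $A_1$ and $A_2$ are independent given $h$ (and each has the same conditional law given $h$ as in its original coupling), we have
\[
\P[A\cap B=\emptyset\mid h]=\P[A_1\cap B=\emptyset\mid h]\cdot\P[A_2\cap B=\emptyset\mid h]
\]
almost surely. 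By the locality of $A_i$ (characterization~\ref{i.Bcond} applied to the coupling $(h,A_i)$), each factor on the right is a measurable function of the projection of $h$ onto $\Har_s(B)$; hence so is their product, so $A$ is local.

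For the structural statement I would proceed in two stages, using characterization~\ref{i.Acond}. First reveal the $\sigma$-algebra $\mathcal A_1$ generated by $A_1$ and the values of $h$ near $A_1$; by locality of $A_1$ the conditional law of $h$ is $C_{A_1}+h^{(1)}$, where $h^{(1)}$ is a zero-boundary $\FGF$ on the allowable domain $D\setminus A_1$ (allowable by Lemma~\ref{lem:allowable-domain}, since $D$ is bounded), independent of $\mathcal A_1$. Because the conditional law of $A_2$ given $h$ depends measurably on $h=C_{A_1}+h^{(1)}$ while $C_{A_1}$ is $\mathcal A_1$-measurable, after conditioning on $\mathcal A_1$ the set $A_2$ is coupled with $h^{(1)}$ on $D\setminus A_1$ in a way that is again local (with associated set $A\cap\overline{D\setminus A_1}$); here one uses nothing beyond the orthogonal decomposition $\dH^s(\R^d)=\Har_s(\cdot)\oplus\dH^s_0(\cdot)$ of Proposition~\ref{prop:directsum} and the equivalences of Lemma~\ref{l.localdefequivalence}. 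A second application of characterization~\ref{i.Acond} then peels off a zero-boundary $\FGF$ on $(D\setminus A_1)\setminus A_2=D\setminus A$, independent of everything revealed so far; the two $s$-harmonic pieces add up, by the tower property of conditional expectation, to $C_A=\E[h\mid\mathcal A]$. Since the data conditioned on in this procedure generate exactly $\sigma(\mathcal A,A_1,A_2)$, this gives the asserted decomposition of the conditional law of $h$ given $\mathcal A$ and $(A_1,A_2)$.

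I expect the main obstacle to be the middle step of the second stage: making precise that $A_2$ is a local set for the conditioned field $h^{(1)}$ on $D\setminus A_1$, particularly when $A_1\cap A_2\neq\emptyset$, where one must control how $A_2$ meets $\partial(D\setminus A_1)$ and check measurability of the regular conditional probabilities involved. This is exactly the technical content of \cite[Lemma~3.6]{schramm2010contour}, whose argument transfers to general $s\geq 0$ and $d\geq 1$ without essential change, since its only structural inputs are the orthogonal Hilbert-space decomposition of $\dH^s(\R^d)$ relative to a domain (Proposition~\ref{prop:directsum}), the list of equivalent local-set characterizations (Lemma~\ref{l.localdefequivalence}), and the conditionally independent union construction --- none of which is sensitive to $s$ or $d$.
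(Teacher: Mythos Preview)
The paper does not actually supply a proof of this lemma: it merely states that the result ``is analogous to \cite[Lemma~3.6]{schramm2010contour}'' and moves on. Your proposal is therefore strictly more detailed than what appears in the paper, and it follows the same route as the Schramm--Sheffield argument the paper defers to: verify characterization~\ref{i.Bcond} for $A_1\check{\cup}A_2$ by factoring $\P[A\cap B=\emptyset\mid h]$ using conditional independence, then obtain the decomposition of the conditional law by exploring $A_1$ first and then $A_2$ inside $D\setminus A_1$. Your identification of the only genuinely delicate point --- showing that, conditionally on $\mathcal A_1$, the set $A_2$ is local for the zero-boundary field $h^{(1)}$ on $D\setminus A_1$ --- is accurate, and your remark that the Schramm--Sheffield proof transfers because its only inputs are the orthogonal decomposition of Proposition~\ref{prop:directsum} and the equivalences of Lemma~\ref{l.localdefequivalence} matches the paper's own justification for why these results carry over to general $s$ and $d$.
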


 \subsection{An example of a local set}

 Certain level lines of the Gaussian free field are studied in
 \cite{schramm2010contour} and shown to be local sets. We will show that
 certain level sets of fractional Gaussian fields with positive Hurst
 parameter are also local sets.

 Let $c_1,c_2>0$, let $s>d/2$ and let $h$ be the $\FGF_s$ on the unit ball
 $B$ in $\R^d$ with boundary values $c_1$ on the upper hemisphere, $-c_2$
 on the lower hemisphere, and zero outside a compact set. Then there is a
 unique surface whose boundary equals between the boundary of the upper
 hemisphere and on which $h=0$. This surface separates a region where $h$
 is positive and a region where $h$ is negative. We call this interface the
 level set of $h$ and denote it by $L$. To see that $L$ is a local set, fix
 $\delta>0$ and let $L_\delta$ be the intersection of $D$ with the union of
 all closed boxes of the grid $\delta \mathbb{Z}^d$ that intersect $L$. For
 each fixed closed set $C$, the event $\{L_\delta=C\}$ is determined by
 $h|_C$. Given a deterministic open set $U\cap C=\emptyset$, the projection
 of $h$ to $\dH^s_0(U)$ is independent of $h|_C$. Thus $L_\delta$ is
 local. Letting $\delta\rightarrow 0$, we see that $L$ is local.

\section{Spherical decomposition}\label{sec:spherical}
\makeatletter{}Since the fractional Gaussian field on $\R^d$ is isotropic (that is,
invariant under rotations), it is natural to consider its decomposition
under spherical coordinates. There is a general theorem \cite[Chapter
7]{wongRF} decomposing any isotropic Gaussian random field into a countable
number of mutually uncorrelated single-parameter stochastic
processes. However, since the $\FGF$ is a tempered distribution modulo a
space of polynomials (rather than a tempered distribution) and since it has
a special form, we will give the spherical decomposition directly. 

\subsection{FGF spherical average processes}
Let $S^{d-1}$ denote the unit sphere in $\R^d$, and define $\Omega_d$
to be the area of $S^{d-1}$.  If $f$ is a continuous function on $\R^d$,
then we define the spherical average process $\overline{f}:(0,\infty)\to
\R$ by $\overline{f}(r) =
\frac{1}{\Omega_d}\int_{S^{d-1}}f(r\sigma)\,d\sigma$, where $\,d\sigma$
denotes $(d-1)$-dimensional Lebesgue measure on $S^{d-1}$.  We calculate
that for all $\phi \in C_c^\infty((0,\infty))$,
\begin{equation} \label{eq:average} \int_0^\infty \overline{f}(r) \phi(r)
  \,dr = \frac{1}{\Omega_d}\int_{\R^d} f(x)
  \frac{\phi(|x|)}{|x|^{d-1}} \,dx.
\end{equation} 
Let $s\geq 0$, and let $h\sim \FGF_s(\R^d)$. Motivated by
\eqref{eq:average}, we define the \textbf{spherical average process}
$\overline{h}$ of $h$ by 
\[
(\overline{h},\phi) \colonequals \frac{1}{\Omega_d}\left(h,x\mapsto
  \frac{\phi(|x|)}{|x|^{d-1}}\right) \quad \text{for all } \phi \in
C_c^\infty((0,\infty)) \cap \s_H(\R). 
\]
Note that if $\phi \in C_c^\infty((0,\infty)) \cap \s_H(\R)$, then
$x\mapsto \phi(|x|)/|x|^{d-1}$ is in $\s_H(\R^d)$, so this definition makes
sense. 

The sphere average process of an FGF is a random distribution, since $h$ is
a random tempered distribution and $\phi_n \to 0$ in $C_c^\infty((0,\infty))$
implies $x\mapsto \frac{\phi_n(|x|)}{|x|^{d-1}}$ converges to 0 in
$\s(\R^d)$. To find the covariance kernel of $\overline{h}$, we calculate
\begin{align*} 
  \E[(\overline{h},\phi)^2] &= \frac{1}{\Omega_d^{2}}\E\left[\left(h,x\mapsto
      \frac{\phi(|x|)}{|x|^{d-1}}\right)^2\right] \\
  &= \int_{\R^d}\int_{\R^d} G^s(x,y)
  \frac{\phi(|x|)\phi(|y|)}{\Omega_d^2|x|^{d-1}|y|^{d-1}} \,dx \, dy \\
  &= \int_{\R} \int_{\R} \left(\frac{1}{\Omega_d^2} \int_{S^{d-1}}
    \int_{S^{d-1}}G^s(r_1\omega, r_2 \sigma) \,d\omega d\sigma \right) \phi(r_1)\phi(r_2)\,dr_1
  \,dr_2,
\end{align*} 
where $G^s$ is the covariance kernel of $h$, given in
Theorem~\ref{KernelComp}. Therefore, the covariance kernel of
$\overline{h}$ is 
\[
\overline{G}^s(r_1,r_2) \colonequals \frac{1}{\Omega_d^{2}} \int_{S^{d-1}}
    \int_{S^{d-1}}G^s(r_1\omega, r_2 \sigma) \,d\omega d\sigma. 
\]
Applying spherical symmetries to simplify this integral, we obtain 
\begin{align*}
\overline{G}^s&(r_1,r_2)= \\
&2\,C\int_0^\pi (\tfrac{1}{2}\log(r_1^2 + r_2^2 -
2r_1r_2 \cos\theta))^{\mathbf{1}_{\{H \in \Z_{+}\}}} \times \\ &\hspace{2cm
 } (r_1^2 + r_2^2 -
2r_1r_2 \cos\theta)^H (\sin \theta)^{d-2} \,d\theta,
\end{align*}
where $C$ is a constant described in Theorem~\ref{KernelComp} and $\Z_+$ is
the set of nonnegative integers. In the case $H\notin \Z^+$, we make a
substitution to obtain an integral in Euler form whose solution may be
expressed in terms of the Gauss hypergeometric function
${_2}F_1(a,b,c;z)$. In particular, we get
\begin{align*}
  \lefteqn{\overline{G}^s(r_1,r_2) = C\, 2^{d-1}\pi^{-1/2} \frac{\Gamma
      \left(\frac{d-1}{2}\right) \Gamma
      \left(\frac{d}{2}\right)}{\Gamma(d-1)}\times } \\
  &\hspace{1cm} (r_1+r_2)^{2H} \, _2{F}_1\left(\frac{d-1}{2},-H,d-1;\frac{4 r_1
      r_2}{(r_1+r_2)^2}\right).
\end{align*}

The hypergeometric function $_2F_1(a,b,c;z)$ satisfies
\[
|_2F_1(a,b,c;z)-{_2}F_1(a,b,c;1)|\asymp |z-1|^{c-a-b}
\] 
as $z\to 1$ whenever $c-a-b \in (0,1)$, because the indicial polynomial at
$z=1$ of the hypergeometric equation satisfied by $_2F_1$ has roots $0$ and
$c-a-b$ (see \cite{kristensson2010second} for details). When $c-a-b>1$,
$_2F_1(a,b,c;z)$ is differentiable at $z=1$. Since $\frac{4 r_1
  r_2}{(r_1+r_2)^2} = 1+O(|r_1-r_2|^2)$ as $r_1\to r_2$, it follows that
when $s>1/2$, we have $\overline{G}^s(r_1,r_2)-\overline{G}^s(r_1',r_2)\asymp
|r_1-r_1'|^{\min(1,\,2s-1)}$ as $r_1'$ approaches $r_1$. 

When $r_1$ and $r_2$ are far apart, $\overline{G}^s(r_1,r_2)$ is
approximately a constant times $(r_1+r_2)^{2H}$ since $_2F_1(a,b,c;z)$
approaches a constant as $z\to 0$. So we see that long-range covariances of
$h$ are determined by $H$, while local covariances are dictated by the
parameter $s-1/2$. In the following proposition, we show that in fact
$s-1/2$ also governs the almost-sure regularity of sample paths of
$\overline{h}$. We prove such a statement only for $s-\tfrac{1}{2} \in
(0,1)$, but we remark that in general the spherical average process is
differentiable $\lceil s-\tfrac{1}{2} \rceil - 1$ times, and those
derivatives are $\alpha$-H\"older continuous for all $\alpha$ less than the
fractional part of $s-\tfrac{1}{2}$.

\begin{proposition}
  When $s\in (1/2,3/2)$, there exists a version of the spherical average
  process $\overline{h}$ which is $\alpha$-H\"older continuous for all
  $\alpha < s-1/2$. 
\end{proposition}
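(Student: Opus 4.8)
The plan is to apply the Kolmogorov–Čentsov continuity criterion to the centered Gaussian process $\ol{h}$ on the interval $(0,\infty)$. Since $\ol{h}$ is Gaussian with mean zero, all that is needed is a bound of the form $\E[(\ol{h}(r_1)-\ol{h}(r_1'))^2] \leq C|r_1-r_1'|^{2(s-1/2)}$ for $r_1,r_1'$ in any fixed compact subinterval of $(0,\infty)$; the Gaussian hypercontractivity estimates $\E[|\ol{h}(r_1)-\ol{h}(r_1')|^p] \leq C_p \E[(\ol{h}(r_1)-\ol{h}(r_1'))^2]^{p/2}$ then give, for every $p$, a moment bound with exponent $p(s-1/2)$, and letting $p\to\infty$ yields $\alpha$-Hölder continuity for every $\alpha< s-1/2$ (on each compact subinterval, hence locally on $(0,\infty)$, which is what the statement asserts).

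The second-moment increment is exactly computable from the covariance kernel $\ol{G}^s$ derived just above the proposition: we have
\[
\E[(\ol{h}(r_1)-\ol{h}(r_1'))^2] = \ol{G}^s(r_1,r_1) - 2\ol{G}^s(r_1,r_1') + \ol{G}^s(r_1',r_1').
\]
The key input is the regularity statement for $\ol{G}^s$ already established in the text, namely that for $s>1/2$ one has $\ol{G}^s(r_1,r_2)-\ol{G}^s(r_1',r_2) \asymp |r_1-r_1'|^{\min(1,\,2s-1)}$ as $r_1'\to r_1$, which comes from the behavior of the Gauss hypergeometric function $_2F_1$ near $z=1$ together with $\tfrac{4r_1r_2}{(r_1+r_2)^2} = 1 + O(|r_1-r_2|^2)$. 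When $s\in(1/2,3/2)$ we have $2s-1\in(0,2)$, and in the range $2s-1<2$ the relevant exponent governing the singularity is $\min(1,2s-1)$; for $s\in(1/2,1)$ this is $2s-1$, and for $s\in[1,3/2)$ it is $1$. I would first treat $s\in(1/2,1)$: here the hypergeometric increment estimate gives $|\ol{G}^s(r_1,r_2)-\ol{G}^s(r_1',r_2)|\leq C|r_1-r_1'|^{2s-1}$ uniformly for $r_2$ in a compact subinterval, and plugging $r_2=r_1$ and $r_2=r_1'$ into the displayed three-term combination and using the triangle inequality gives $\E[(\ol{h}(r_1)-\ol{h}(r_1'))^2]\leq C|r_1-r_1'|^{2s-1} = C|r_1-r_1'|^{2(s-1/2)}$, as required. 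For $s\in[1,3/2)$ the kernel is differentiable at the diagonal, so trivially $\E[(\ol{h}(r_1)-\ol{h}(r_1'))^2]\leq C|r_1-r_1'|^2 \leq C|r_1-r_1'|^{2(s-1/2)}$ on compacts, since $2\geq 2(s-1/2)$ there; this even gives a better exponent, consistent with the claim for all $\alpha<s-1/2$.

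The main obstacle is making the hypergeometric asymptotics uniform in the second argument $r_2$ over a compact subinterval, rather than merely pointwise as $r_1'\to r_1$: one needs $|\ol{G}^s(r_1,r_2)-\ol{G}^s(r_1',r_2)| \leq C|r_1-r_1'|^{2s-1}$ with $C$ independent of $r_2$ (and of $r_1,r_1'$ in the compact subinterval). This follows because the map $(r_1,r_2)\mapsto \tfrac{4r_1r_2}{(r_1+r_2)^2}$ is smooth and bounded away from the problematic structure except near the diagonal, and the singular behavior of $_2F_1(a,b,c;\cdot)$ at $z=1$ with exponent $c-a-b = (d-1) - \tfrac{d-1}{2} + H = \tfrac{d-1}{2}+H = s-\tfrac12$ (so $c-a-b = s-1/2 \in(0,1)$ exactly in our range) can be extracted from a Taylor-with-remainder or integral representation of $_2F_1$ near $z=1$ with remainder controlled on compact sets; a reference such as \cite{kristensson2010second} supplies the needed connection formula. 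Once this uniform bound is in hand, the rest is the routine Kolmogorov argument and I would not spell out the constants.
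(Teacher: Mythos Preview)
Your approach is exactly the paper's: bound the second moment of increments from the kernel regularity, lift to all moments by Gaussianity, then apply Kolmogorov--Chentsov.

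There is one slip in your treatment of $s\in[1,3/2)$. You assert that differentiability of the kernel at the diagonal gives $\E[(\ol{h}(r_1)-\ol{h}(r_1'))^2]\leq C|r_1-r_1'|^2$, but $C^1$ regularity of a covariance does not by itself yield a quadratic variance bound (consider $K(a,b)=1-|a-b|^{3/2}$), and here the variance is in fact only $\asymp |r_1-r_1'|^{2s-1}$ with $2s-1<2$, so your intermediate inequality is false even though your final bound happens to be right. The fix, which also removes the case split, is to expand the three-term combination directly rather than going through one-variable kernel increments. Writing $\ol{G}^s(r_1,r_2)=C(r_1+r_2)^{2H}\bigl[F(1)+g(1-z)\bigr]$ with $z=\tfrac{4r_1r_2}{(r_1+r_2)^2}$ and $g(u)\asymp u^{s-1/2}$ from the hypergeometric connection formula,
\begin{align*}
\ol{G}^s(r_1,r_1)-2\ol{G}^s(r_1,r_1')+\ol{G}^s(r_1',r_1')
&= C\,F(1)\Bigl[(2r_1)^{2H}-2(r_1+r_1')^{2H}+(2r_1')^{2H}\Bigr] \\
&\quad -2C(r_1+r_1')^{2H}\,g(1-z).
\end{align*}
The bracket is a centred second difference of the smooth function $r\mapsto(2r)^{2H}$ and is $O(|r_1-r_1'|^2)$; the remaining term is $\asymp |r_1-r_1'|^{2s-1}$ since $1-z\asymp(r_1-r_1')^2$. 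As $2s-1<2$, the latter dominates, giving exactly the estimate $\E[(\ol{h}(r_1)-\ol{h}(r_1'))^2]\leq C|r_1-r_1'|^{2s-1}$ that the paper invokes (without spelling out this computation). After that your Kolmogorov argument finishes the proof for the full range $s\in(1/2,3/2)$.
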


\begin{proof}
  Since the spherical average covariance kernel $\overline{G}^s(r_1,r_2)$
  is finite for all $r_1$ and $r_2$ when $s\in (1/2,3/2)$, there exists a
  pointwise defined Gaussian process $\wt{h}$ on $(0,\infty)$ which agrees
  in law with $\overline{h}$ \cite[Theorem
  12.1.3]{dudley2002real}. Furthermore, the regularity of the covariance
  kernel implies that for $m=1$, we have 
  \begin{equation} \label{eq:kolmo}
  \E[|\wt{h}(r_1)-\wt{h}(r_2)|^{2m}] \leq C_m |r_1-r_2|^{m(2s-1)}, 
  \end{equation} 
  where $C_1$ is some constant. Since $\wt{h}(r_1)-\wt{h}(r_2)$ is
  Gaussian, \eqref{eq:kolmo} holds for all $m\in \N$, for some constants
  $C_m$. Applying the Kolmogorov-Chentsov continuity theorem with suitably
  large $m$, we conclude that $\wt{h}$, and therefore also $\overline{h}$,
  has a version which is almost surely $\alpha$-H\"older continuous for all
  $\alpha < s - 1/2$.
\end{proof}

\subsection{Background on spherical harmonic functions}

We write the Laplacian in spherical coordinates as
\begin{equation}\label{eq:sphecial Laplacian}
\Delta = r^{1-d}   \frac{ \partial  }{\partial r} r^{d-1} \frac{\partial
}{\partial r} + \frac{1}{r^2} \Delta_{S^{d-1} }, 
\end{equation}
where $\Delta_{S^{d-1}}$ is the Laplacian on the unit sphere
$S^{d-1}\subset \R^d$. A polynomial $\phi \in \mathbb{R} [x_1, x_2,\cdots,
x_d]$ is said to be harmonic if $\Delta \phi=0$. Suppose that $\phi$ is
harmonic and homogeneous of degree $k$. Let
$f=\left.\phi\right|_{S^{d-1}}$, and note that we have $\phi(ru)= f(u)r^k$
for all $u\in S^{d-1}$ and $r\geq 0$. Writing $\Delta \phi=0$, using
(\ref{eq:sphecial Laplacian}), and setting $r=1$ yields
\begin{equation}\label{eq: eigenfunction}
\Delta_{S^{d-1}} f = -k(k+d-2)f.
\end{equation}
In other words, $f$ is an eigenfunction of $\Delta_{S^{d-1}}$ with
eigenvalue $-k(k + d - 2)$.

We mention a few basic results about spherical harmonics that appear, for
example, in \cite[Chapter IV, \S 2]{stein1971introduction}. Assume $d\geq
2$, let $A_k$ be the set of homogeneous degree $k$ harmonic polynomials on
$\mathbb{R}^d$ and let $H_k$ be the space of functions on $S^{d-1}$
obtained by restricting functions in $A_k$\label{not:sphP}. An important property is that
the spaces $H_k$ are pairwise orthogonal (for the $L^2(S^{d-1})$ inner
product) and their union is dense in $L^2(S^{d-1})$. This means that we can
define, for each fixed $k$, an orthonormal basis $\{\phi_{ k,j }: 1\leq j
\leq \dim(H_k)\}$ of $H_k$ which is the restriction of the harmonic
polynomials $\{P_{ k,j }: 1 \leq j \leq \dim(H_k)\}\subset A_k$, so that
the collection of all $\phi_{k,j}$ is an orthonormal basis of
$L^2(S^{d-1})$ \label{not:sphH}.

We will need the following important theorem concerning the behaviour of
harmonic polynomials under the Fourier transform
\cite[pg. 72]{stein1970singular}. We say that a function $f:\R^d \to
\mathbb{C}$ is radial if $f(x)=f(y)$ whenever $|x|=|y|$. We 
occasionally abuse notation and write $f(r)$ where $f$ is radial and $r
\geq 0$, with the understanding that we mean $f((r,0,\ldots,0))$. 

\begin{theorem}\label{thm:Hecke}
  Let $P_k(x)$ be a homogeneous harmonic polynomial of degree $k$ in
  $\R^d$. Suppose that $f$ is radial and that $P_kf \in L^2(\R^d)$. Then the
  Fourier transform of $P_k f$ is of the form $P_kg$, where $g$ is a radial
  function. Moreover, the induced transform $\mathcal{F}_{d,k}
  (f)\colonequals g$ depends only on $d+2k$. More precisely, we have
  $\mathcal{F}_{d,k}=i^k\mathcal{F}_{d+2k,0}$.
\end{theorem}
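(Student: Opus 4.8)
The plan is to reduce the whole statement to one Gaussian computation — Hecke's identity — and then propagate it by density, using an isometry that relates $L^2$ on $\R^d$ (against $|P_k|^2$) to $L^2$ on $\R^{d+2k}$.

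\textbf{Step 1: Hecke's identity for a Gaussian.} First I would note that, by polarization, the space of homogeneous harmonic polynomials of degree $k$ is spanned by the powers $x \mapsto (a\cdot x)^k$ with $a \in \bbC^d$ satisfying $a\cdot a = 0$ (each such power is harmonic and homogeneous of degree $k$, and they span by a standard polarization argument). So it suffices to prove $\mathcal F\bigl[(a\cdot x)^k e^{-|x|^2/2}\bigr](\xi) = c_k\,(a\cdot\xi)^k e^{-|\xi|^2/2}$ for the appropriate power of $i$, $c_k$. To do this, complete the square in $-\tfrac12|x|^2 - ix\cdot\xi = -\tfrac12(x+i\xi)\cdot(x+i\xi) - \tfrac12|\xi|^2$, shift the contour of integration from $\R^d$ to $\R^d + i\xi$ (legitimate since the integrand is entire with Gaussian decay across the relevant strips), substitute $z = x+i\xi$, and expand $(a\cdot z - i\,a\cdot\xi)^k = \sum_{j=0}^{k}\binom kj (a\cdot z)^j(-i\,a\cdot\xi)^{k-j}$. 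Since $a\cdot a = 0$, every moment $\int_{\R^d}(a\cdot z)^j e^{-|z|^2/2}\,dz$ with $j\ge 1$ vanishes (odd $j$ by symmetry; even $j$ because each Wick pairing contributes a factor $a\cdot a$), so only $j=0$ survives and gives the claimed identity with $c_k = i^{-k}$ (this is the power of $i$ forced by the paper's Fourier convention; with the convention of the cited reference it is written $i^k$). By the dilation rule $\mathcal F[\phi(\lambda\,\cdot)](\xi) = \lambda^{-d}\mathcal F[\phi](\xi/\lambda)$ and the homogeneity of $P_k$, this upgrades to $\mathcal F\bigl[P_k(\cdot)e^{-t|\cdot|^2/2}\bigr](\xi) = i^{-k}\,t^{-d/2-k}\,P_k(\xi)\,e^{-|\xi|^2/(2t)}$ for all $t>0$.

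\textbf{Step 2: the $\R^d$–$\R^{d+2k}$ isometry and density.} The bridge to $\mathcal F_{d+2k,0}$ is the elementary identity that, for radial $f$, $\int_{\R^d}|P_k(x)|^2|f(|x|)|^2\,dx = c_{d,k}\int_0^\infty |f(r)|^2 r^{d+2k-1}\,dr = c'_{d,k}\int_{\R^{d+2k}}|f(|y|)|^2\,dy$, so $P_k f\in L^2(\R^d)$ if and only if $f$ (as a radial function) lies in $L^2(\R^{d+2k})$; in particular $\mathcal F_{d,k}(f)$ and $\mathcal F_{d+2k,0}(f)$ are simultaneously defined, and $\mathcal F_{d+2k,0}$ is an $L^2$-isometry on radial functions. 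Next, finite linear combinations of the Gaussians $f_t(r)=e^{-tr^2/2}$, $t>0$, are dense in $L^2\bigl((0,\infty),r^{d+2k-1}dr\bigr)$: a radial $h$ orthogonal to all of them would have $\int_0^\infty h(r)e^{-tr^2/2}r^{d+2k-1}\,dr = 0$ for all $t$, and injectivity of the Laplace transform forces $h=0$. For $f=\sum_i c_i f_{t_i}$, Step 1 gives $\mathcal F[P_k f] = P_k\cdot\bigl(\sum_i c_i\,i^{-k} t_i^{-d/2-k}e^{-r^2/(2t_i)}\bigr)$, of the form $P_k g$ with $g$ radial, and comparison with $\mathcal F_{d+2k,0}(f_t)(r)=t^{-d/2-k}e^{-r^2/(2t)}$ shows $\mathcal F_{d,k}(f)=i^{-k}\mathcal F_{d+2k,0}(f)$ on this dense class. (Conceptually, the fact that $\mathcal F[P_k f]$ has the form $P_k g$ with $g$ radial is also forced by rotation covariance of the Fourier transform together with Schur's lemma applied to the $SO(d)$-representation $A_k$; the Gaussian computation simply fixes the scalar.)

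\textbf{Step 3: passage to general $f$.} For an arbitrary radial $f$ with $P_k f\in L^2(\R^d)$, pick Gaussian combinations $f_n$ with $P_k f_n\to P_k f$ in $L^2(\R^d)$ (possible by Step 2, since this convergence is equivalent to $f_n\to f$ in $L^2(r^{d+2k-1}dr)$). Continuity of the $L^2$-Fourier transform gives $\mathcal F[P_k f_n]\to \mathcal F[P_k f]$; each $\mathcal F[P_k f_n]=P_k g_n$ with $g_n$ radial, and the set $\{P_k g : g \text{ radial},\ P_k g\in L^2(\R^d)\}$ is closed in $L^2(\R^d)$ (if $P_k g_n\to\Psi$ then $g_n\to \Psi/P_k$ a.e.\ on the full-measure set $\{P_k\ne 0\}$ along a subsequence, and the limit is radial there, so $\Psi=P_k g$ for a radial $g$), whence $\mathcal F[P_k f]=P_k g$ with $g$ radial. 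Continuity of $\mathcal F_{d+2k,0}$ on $L^2(\R^{d+2k})$ then propagates $\mathcal F_{d,k}=i^{-k}\mathcal F_{d+2k,0}$ (equivalently, the stated $\mathcal F_{d,k}=i^{k}\mathcal F_{d+2k,0}$ up to the sign bookkeeping in the definition of $\mathcal F$) to all admissible $f$. I expect the main obstacle to be precisely this bookkeeping: justifying the contour shift and the moment-vanishing for isotropic $a$ in Step 1, and pinning down the dense class of radial functions in Step 2 so that the three transforms $\mathcal F$ on $\R^d$, $\mathcal F_{d,k}$, and $\mathcal F_{d+2k,0}$ are simultaneously well-defined and continuous — the rest is soft functional analysis.
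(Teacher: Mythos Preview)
The paper does not prove this theorem: it is quoted as a classical result from Stein, \emph{Singular Integrals and Differentiability Properties of Functions}, p.~72, and used as a black box in the spherical decomposition. So there is no ``paper's own proof'' to compare against; your proposal goes well beyond what the survey does.

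That said, your outline is a correct and standard route to the Hecke--Bochner identity, distinct from the Bessel-function computation in Stein--Weiss. The Gaussian calculation in Step~1 via isotropic vectors $a\cdot a=0$ is clean (valid since the paper's spherical-harmonic discussion explicitly assumes $d\ge 2$, so such $a$ exist), and the moment-vanishing argument is exactly right. Two small remarks. First, in Step~3 the closedness of $\{P_k g : g\text{ radial}\}$ in $L^2(\R^d)$ is more cleanly obtained directly from your Step~2 isometry: since $g\mapsto P_k g$ is (up to a constant) an isometry from radial $L^2(\R^{d+2k})$ into $L^2(\R^d)$, its range is automatically closed --- the pointwise-subsequence argument you sketch works but is unnecessarily delicate. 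Second, your sign bookkeeping is honest: with the paper's convention $\mathcal F[\phi](\xi)=(2\pi)^{-d/2}\int\phi(x)e^{-ix\cdot\xi}\,dx$ one indeed finds $\mathcal F_{d,k}=(-i)^k\mathcal F_{d+2k,0}$, so the displayed $i^k$ reflects the convention of the cited reference rather than the one fixed in Section~\ref{subsec:tempdist}; you flag this correctly.
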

\begin{remark}
  If $P_{k,j}f\in \dH^s(\R^d)$, then
  \[	
  \mathcal{F}\left[(-\Delta)^{s/2}(P_{k,j}f)\right](\xi) = |\xi|^{s}i^k\mathcal{F}_{d+2k,0}[f](\xi)P_{k,j}(\xi),
  \]
  Applying the Fourier transform on both sides (which is the inverse
  Fourier transform evaluated at $-x$) and using the
  theorem again, we obtain 
  \[
  (-\Delta)^{s/2}(P_{k,j}f) = [(-\Delta)^{s/2}_{\R^{d+2k}}f] P_{k,j},
  \]
  where $(-\Delta)^{s/2}_{\R^{d+2k}}f$ is the fractional Laplacian on
  $\R^{d+2k}$ acting on $f$ interpreted as a function on $\R^{d+2k}$ (that
  is, we define $f(x)$ for $x\in \R^{d+2k}$ to be $f(x')$ where $x'$ is any
  point in $\R^d$ satisfying $|x|_{\R^{d+2k}} = |x'|_{\R^d}$).
\end{remark}
\begin{remark}
  Let $P_{k,j}f_1$ and $P_{k',j'}f_2\in \dH^s(\R^d)$. Then
  \begin{multline}
    \label{eqn:innerproduct}
    \left\langle P_{k,j}f_1,P_{k',j'}f_2\right\rangle_{\dH^s(\R^d)} = \\
    \begin{cases}
      \int_0^\infty r^{2s+2k+d-1}g_1(r)\overline{g_2(r)}\,dr  \quad & (k,j)=(k',j'), \\
      0 \quad &(k,j)\neq(k',j').
    \end{cases}
  \end{multline}
  by orthonormality of $\phi_{k,j}$, where $g_i = \mathcal{F}_{d,k}[f_i] =
  i^k\mathcal{F}_{d+2k,0}[f_i]$ for $i\in \{1,2\}$. 
\end{remark}

We see that the right hand side of \eqref{eqn:innerproduct} (for
$(k,j)=(k',j')$) can be rewritten as $\left\langle f_1,
  f_2\right\rangle_{\dH^s(\R^{d+2k})}$ (since $\phi_{0,1} =
\Omega_d^{-1/2}$), where the radial functions $f_i$ are treated as
functions defined on $\R^{d+2k}$ (as described in the remark above).  We
thus have a unitary correspondence between elements $x\mapsto
f(|x|_{\R^{d+2k}})\in \dH^s(\R^{d+2k})$ and elements
$x\mapsto f(|x|_{\R^d})P_{k,j}(x)\in \dH^s(\R^d)$.

For $k\in \N$ and $1\leq j \leq \dim(H_k)$, we define the Hilbert space
$\dH_{k,j}^s(\R^d)$ to be the space of all functions of the form $P_{k,j}f$
where $x\mapsto f(|x|_{\R^{d+2k}})\in \dH^s(\R^{d+2k})$ is radial and
$P_{k,j}f \in \dH^s(\R^d)$ \label{not:SobSph}.  By \ref{eqn:innerproduct}, we see that
$\dH_{k,j}^s(\R^d)$ are orthogonal. In fact, they also span $\dH^s(\R^d)$:

\begin{lemma} \label{lemma: orthogonal}
$\dH^s_{k,j}(\R^d)$ are orthogonal subspaces spanning $\dH^s(\R^d)$.
\end{lemma}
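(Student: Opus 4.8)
The orthogonality of the $\dH^s_{k,j}(\R^d)$ is already recorded in \eqref{eqn:innerproduct}, so the plan is to establish the spanning statement, and the idea is to pass to the Fourier side. Recall from Section~\ref{subsec:tempdist} that $\mathcal F$ embeds $\dH^s(\R^d)$ isometrically into $L^2(\R^d,|\xi|^{2s}\,d\xi)$, and recall the unitary correspondence (set up just above) between radial elements of $\dH^s(\R^{d+2k})$ and elements of $\dH^s_{k,j}(\R^d)$. The first step is to note that, under $\mathcal F$, the subspace $\dH^s_{k,j}(\R^d)$ corresponds to exactly the set of functions $\xi\mapsto g(|\xi|)P_{k,j}(\xi)$ lying in $L^2(\R^d,|\xi|^{2s}\,d\xi)$: Theorem~\ref{thm:Hecke} guarantees that $\mathcal F[P_{k,j}f]$ has this shape for radial $f$ (and, applied to $\mathcal F^{-1}$, that the inverse transform of any such product is again $P_{k,j}$ times a radial function), while such a product lies in $L^2(\R^d,|\xi|^{2s}\,d\xi)$ if and only if $\int_0^\infty r^{2s+2k+d-1}|g(r)|^2\,dr<\infty$ (integrate out the sphere variable, using that $\phi_{k,j}$ is $L^2(S^{d-1})$-normalized), which is precisely the integrability condition defining $\dH^s_{k,j}(\R^d)$.

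Given $F\in \dH^s(\R^d)$, Tonelli's theorem applied to $\wh F\in L^2(\R^d,|\xi|^{2s}\,d\xi)$ shows that for a.e.\ $r>0$ the slice $\omega\mapsto \wh F(r\omega)$ lies in $L^2(S^{d-1})$. Using that $\{\phi_{k,j}\}$ is an orthonormal basis of $L^2(S^{d-1})$ (recalled above from \cite{stein1971introduction}), expand $\wh F(r\omega)=\sum_{k,j}a_{k,j}(r)\phi_{k,j}(\omega)$ with $a_{k,j}(r)=\int_{S^{d-1}}\wh F(r\omega)\,\overline{\phi_{k,j}(\omega)}\,d\omega$, and set $g_{k,j}(r)\colonequals r^{-k}a_{k,j}(r)$. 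Since $P_{k,j}$ is homogeneous of degree $k$ and restricts to $\phi_{k,j}$ on $S^{d-1}$, we have $P_{k,j}(\xi)=|\xi|^k\phi_{k,j}(\xi/|\xi|)$, so the partial sums of $\sum_{k,j}g_{k,j}(|\xi|)P_{k,j}(\xi)$ coincide with those of $\sum_{k,j}a_{k,j}(|\xi|)\phi_{k,j}(\xi/|\xi|)$. Bessel's inequality bounds these partial sums in $L^2(S^{d-1})$, for each $r$, by $\bigl(\int_{S^{d-1}}|\wh F(r\omega)|^2\,d\omega\bigr)^{1/2}$, and $r\mapsto r^{2s+d-1}\int_{S^{d-1}}|\wh F(r\omega)|^2\,d\omega$ is integrable on $(0,\infty)$ with integral $\|F\|^2_{\dH^s(\R^d)}$; dominated convergence then gives $\sum_{k,j}g_{k,j}(|\xi|)P_{k,j}(\xi)=\wh F(\xi)$ with convergence in $L^2(\R^d,|\xi|^{2s}\,d\xi)$.

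By Parseval on the sphere and Tonelli, $\sum_{k,j}\int_0^\infty r^{2s+2k+d-1}|g_{k,j}(r)|^2\,dr=\sum_{k,j}\int_0^\infty r^{2s+d-1}|a_{k,j}(r)|^2\,dr=\|F\|^2_{\dH^s(\R^d)}<\infty$, so each $g_{k,j}(|\cdot|)P_{k,j}$ lies in $L^2(\R^d,|\xi|^{2s}\,d\xi)$ and hence, by the first step, is the Fourier transform of some $F_{k,j}\in \dH^s_{k,j}(\R^d)$. Applying $\mathcal F^{-1}$ to the identity of the second paragraph and using that $\mathcal F$ is isometric exhibits $F=\sum_{k,j}F_{k,j}$ as a sum converging in $\dH^s(\R^d)$ with $F_{k,j}\in\dH^s_{k,j}(\R^d)$; since $F$ was arbitrary, the $\dH^s_{k,j}(\R^d)$ span $\dH^s(\R^d)$, which together with the orthogonality from \eqref{eqn:innerproduct} completes the proof. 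The step I expect to require the most care is the first one: matching the abstract definition of $\dH^s_{k,j}(\R^d)$, phrased through the $\R^{d+2k}$ correspondence, with its concrete Fourier-side description, i.e.\ checking via Theorem~\ref{thm:Hecke} that $\mathcal F$ really carries $\dH^s_{k,j}(\R^d)$ onto $\{\xi\mapsto g(|\xi|)P_{k,j}(\xi)\}\cap L^2(\R^d,|\xi|^{2s}\,d\xi)$ and that the attendant integrability conditions transform correctly under $\xi\mapsto(r,\omega)$; the fiberwise expansion and the $L^2$-convergence are then routine.
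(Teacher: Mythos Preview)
Your proof is correct, and it takes a genuinely different route from the paper's. The paper argues by density: it takes $g\in\s(\R^d)$, expands $g$ in spherical harmonics on the \emph{physical} side by writing $\rho_{k,j}(|x|)=\int_{S^{d-1}}g(|x|\omega)\phi_{k,j}(\omega)\,d\omega$, and then checks (via truncation to annuli and Fatou's lemma) that each piece $x\mapsto|x|^{-k}\rho_{k,j}(|x|)P_{k,j}(x)$ lands in $\dH^s_{k,j}(\R^d)$; spanning then follows because Schwartz functions are dense in $\dH^s(\R^d)$. You instead work on the Fourier side and decompose an arbitrary $F\in\dH^s(\R^d)$ directly, using that $\wh F\in L^2(\R^d,|\xi|^{2s}\,d\xi)$ admits a fiberwise spherical-harmonic expansion whose convergence in the weighted $L^2$ space follows from dominated convergence. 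Your approach is arguably cleaner: it avoids the density step, delivers $\dH^s$-convergence of the decomposition $F=\sum_{k,j}F_{k,j}$ explicitly, and makes the orthogonal-sum structure transparent via Parseval on $S^{d-1}$. The paper's approach has the minor advantage that it never needs to invoke Theorem~\ref{thm:Hecke} beyond the $L^2$ setting in which it is stated, whereas your first step (identifying $\mathcal F(\dH^s_{k,j})$ with $\{g(|\xi|)P_{k,j}(\xi)\}\cap L^2(|\xi|^{2s}\,d\xi)$) implicitly relies on the unitary correspondence set up just before the lemma; you are right to flag this as the point needing the most care, though the paper's preceding discussion does supply what is needed.
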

\begin{proof}
  We only need to check the spanning condition.  Since $\s(\R^d)$ is dense
  in $\dH^s(\R^d)$, it suffices to show that all $g\in \s(\R^d)$ can be
  written as a linear combination of terms in $\dH^s_{k,j}(\R^d)$.  To do
  this, we use the stated fact that
  $\{\omega\mapsto\phi_{k,j}(\omega)\,:\,k\in \N,\,1\leq j \leq \dim H_k\}$
  a basis for $L^2(S^{d-1})$.  We compute for every sphere of radius $|x|$:
\[
\langle \omega\mapsto g(|x|\omega),\phi_{k,j}\rangle_{L^2(S^{d-1})} =
\int_{S^{d-1}}g(|x|\omega)\phi_{k,j}(\omega)\,d\omega \equalscolon
\rho_{k,j}(|x|),
\]
and see that $g(x) = \sum_{k,j} \rho_{k,j}(|x|)\phi_{k,j}(x/|x|) =
\sum_{k,j}|x|^{-k} \rho_{k,j}(|x|)P_{k,j}(x)$.  Define $g_{k,j}(x) =
|x|^{-k}\rho_{k,j}(|x|)P_{k,j}(x)$ and let $\chi_R(x)$ be the
characteristic function of an annulus of radii $1/R$ and $R$, where $R>1$.
It is clear that $g_{k,j}(x)\chi_R(x)$ is an element of $L^2(\R^d)$, since
$\|g_{k,j}(x)\chi_R(x)\|_{L^2(\R^d)}\leq \|g\|_{L^2(\R^d)}$ (by
orthogonality of $\phi_{k,j}(x/|x|)$), thus by Fatou's Lemma $g_{k,j}\in
L^2(\R^d)$.  Hence, it follows that the Fourier transform $\wh{g}_{k,j}$
exists and is in $L^2(\R^d)$.  Following the same reasoning as above with
$\xi\mapsto |\xi|^{2s}\widehat{g}_{k,j}(\xi)$, we have that
$x\mapsto \rho_{k,j}(|x|)P_{k,j}(x)\in \dH^s_{k,j}(\R^d)$ as required.
\end{proof}
\subsection{Spherical decomposition of the FGF}
We now study the spherical decomposition of the $\FGF_s(\R^d)$, 
which we denote by $h^d$. From the completeness and orthogonality
of $\dH^s_{k,j}(\R^d)$,
\begin{equation}\label{eq: decompostion}
h^d=\displaystyle{\sum_{k=0}^{\infty}}\displaystyle{\sum_{j=1} ^{\mathrm{dim} H_k}}h^d_{k,j},
\end{equation}
where the $h^d_{k,j}$ are independent standard Gaussians on the space of
$\dH^s_{k,j}(\R^d)$ (this follows from the same reasoning as in
Section~\ref{sec:projection}).

We note that $\dH^s_{k,j}(\R^d)$ is unitarily isomorphic to the Hilbert
space $\mathcal{R}^s_{d,k}$ consisting of radial functions $f_1,f_2\in\dH^s(\R^{d+2k})$ with inner product given in
\eqref{eqn:innerproduct}:
\[
\langle f_1,f_2 \rangle_{\mathcal{R}^s_{d,k}} = \int_0^\infty r^{2s+2k+d-1}g_1(r)\overline{g_2(r)}\,dr,
\label{not:sphHil}
\]
where $g_i = \mathcal{F}_{d+2k,0}[f_i]$ for $i\in \{1,2\}$. Thus, it follows
that we can construct a standard Gaussian on $\mathcal{R}^s_{d,k}$, which
we call $\wt{h}^d_{k,j}$ that corresponds to a standard Gaussian
$h^d_{k,j}$ on $\dH^s_{k,j}(\R^d)$.

The key observation is that the inner product on the Hilbert space
$\mathcal{R}^s_{d,k}$ above only depends on $d+2k$ (and $s$).  This means
that $\wt{h}^d_{k,j}$ has the same distribution as $\wt{h}^{d+2k}_{0,1}$
(equivalently, $\mathcal{R}^s_{d,k}$ is unitarily equivalent to
$\mathcal{R}^s_{d+2k,0}$).
Averaging both sides of \eqref{eq: decompostion} over $S^{d-1}_r
\colonequals rS^{d-1}$, we have
\[
	h^d_{0,1}=\frac{1}{r^{d-1}\Omega_d} \int_{S^{d-1}_r}  h^d(x)dx  =  \frac{1}{\Omega_d} \int_{S^{d}}  h^d(r\theta)d\theta.
\]
Note that we have used that $P_{0,1}(x) = \Omega_d^{-1/2}$, so that
$\dH^s_{0,1}(\R^d)$ is the set of radial functions $f\in\dH^s(\R^d)$. This
implies that $h^d_{0,1}$ averaged over a sphere is $h^d_{0,1}$.  By the
same observation, we have
\[
\wt{h}^d_{0,1} (r)= \frac{1}{\sqrt{\Omega_{d}}}\int_{S^{d-1}}  h^d(r\theta)d\theta,
\]
a constant multiple of the spherical average of $h^d$.
We collect these results in the following theorem.
\begin{theorem}\label{thm:spherical_decomposition}
  In the decompostion of $h^d=\FGF_s(\R^d )$ in \eqref{eq: decompostion},
  the coefficient processes $\wt{h}^d_{k,j}$ with respect to the normalized
  harmonic polynomials $\{P_{k,j}\}$ are independent processes with the
  same distribution as
  \[
	  r\mapsto \frac{1}{\sqrt{\Omega_{d+2k}}}\int_{S^{d+2k-1}} h^{d+2k}(r\theta)\,d\theta,
  \] 
  where $h^{d+2k}$ is an $\FGF_s(\R^{d+2k})$.
\end{theorem}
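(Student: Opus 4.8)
The plan is to assemble the pieces established in this section into the single statement of the theorem; no genuinely new argument is required. I will (i) record the decomposition \eqref{eq: decompostion} together with independence of the summands, (ii) transport each summand through the unitary isomorphism $\dH^s_{k,j}(\R^d)\to\mathcal{R}^s_{d,k}$, (iii) observe that $\mathcal{R}^s_{d,k}$ depends on $(d,k)$ only through $d+2k$, and (iv) identify the resulting process with the spherical average of an $\FGF_s(\R^{d+2k})$.

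For (i), Lemma~\ref{lemma: orthogonal} gives the orthogonal direct-sum decomposition $\dH^s(\R^d)=\bigoplus_{k\geq 0}\bigoplus_{j=1}^{\dim H_k}\dH^s_{k,j}(\R^d)$. A standard Gaussian on a Hilbert space that splits as an orthogonal direct sum of closed subspaces is the independent sum of standard Gaussians on the summands, by the Bochner--Minlos construction used in Section~\ref{sec:projection}; one checks that for each $\phi\in\s_H(\R^d)$ the variances of the terms $(h^d_{k,j},\phi)$ sum to $\|\phi\|^2_{\dH^{-s}(\R^d)}$, so the series converges almost surely and defines a random element of $\s'_H(\R^d)$ with the law of $h^d$, which gives \eqref{eq: decompostion}. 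For (ii), the correspondence $P_{k,j}f\mapsto f$, with $f$ radial and regarded as an element of $\dH^s(\R^{d+2k})$, is a unitary isomorphism $\dH^s_{k,j}(\R^d)\to\mathcal{R}^s_{d,k}$ by \eqref{eqn:innerproduct}, Theorem~\ref{thm:Hecke}, and the discussion following \eqref{eqn:innerproduct}. A unitary map carries a standard Gaussian to a standard Gaussian, so the pushforward $\wt{h}^d_{k,j}$ is a standard Gaussian on $\mathcal{R}^s_{d,k}$, and the family $\{\wt{h}^d_{k,j}\}$ remains independent since each $\wt{h}^d_{k,j}$ is a measurable function of $h^d_{k,j}$.

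The substantive step is (iii): the inner product $\langle f_1,f_2\rangle_{\mathcal{R}^s_{d,k}}=\int_0^\infty r^{2s+2k+d-1}g_1(r)\overline{g_2(r)}\,dr$ with $g_i=\mathcal{F}_{d+2k,0}[f_i]$ has weight exponent $2s+(d+2k)-1$ and transform $\mathcal{F}_{d+2k,0}$, both depending on $d+2k$ alone; hence $\mathcal{R}^s_{d,k}$ and $\mathcal{R}^s_{d+2k,0}$ are literally the same Hilbert space, and therefore $\wt{h}^d_{k,j}\stackrel{d}{=}\wt{h}^{d+2k}_{0,1}$. For (iv), apply (i)--(ii) in dimension $d+2k$ with $k$ replaced by $0$: since $P_{0,1}\equiv\Omega_{d+2k}^{-1/2}$, the subspace $\dH^s_{0,1}(\R^{d+2k})$ consists of the radial elements of $\dH^s(\R^{d+2k})$, and projecting $h^{d+2k}$ onto it and averaging over $S^{d+2k-1}_r$ yields $\tfrac{1}{\sqrt{\Omega_{d+2k}}}\int_{S^{d+2k-1}}h^{d+2k}(r\theta)\,d\theta$, exactly the computation performed in the paragraph preceding the theorem. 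Combining with the equality in law from (iii) gives the stated description of each $\wt{h}^d_{k,j}$, and independence is inherited from (i)--(ii).

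I expect the only real subtlety to be bookkeeping: pinning down the sense in which each $\wt{h}^d_{k,j}$ is a \emph{process} (equivalently, a random functional on a suitable space of test functions on $(0,\infty)$, defined modulo polynomials when $H>0$) rather than merely an abstract Gaussian Hilbert-space element, and checking that all the unitary identifications above are compatible with that realization. This is handled exactly as the realizations of $\FGF_s(\R^{d+2k})$ and of its spherical average process were handled earlier in this section: one verifies that $\phi\mapsto\bigl(x\mapsto\phi(|x|_{\R^{d+2k}})P_{k,j}(x)\bigr)$ maps suitable test functions into $\s_H$ continuously, so the abstract Gaussian induces a random distribution with the stated covariance, and no new ideas are needed.
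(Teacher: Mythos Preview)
Your proposal is correct and follows essentially the same route as the paper: the theorem is really a summary statement that collects the orthogonal decomposition of Lemma~\ref{lemma: orthogonal}, the unitary identification $\dH^s_{k,j}(\R^d)\cong\mathcal{R}^s_{d,k}$ via \eqref{eqn:innerproduct}, the observation that $\mathcal{R}^s_{d,k}$ depends only on $d+2k$, and the spherical-average computation preceding the theorem. Your outline (i)--(iv) matches this exactly, and your closing remark about the bookkeeping needed to realize the abstract Gaussians as processes is a fair acknowledgment of the only remaining detail.
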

\begin{remark}
  We notice that since $\wt{h}^d_{k,j}$ is the average process of
  $\FGF_s(\R^{d+2k})$, it is defined modulo degree $\lfloor
  s-\frac{d}{2}-k\rfloor$ polynomials.  Since $\wt{h}_{d,k,j}$ is the
  coefficient of $P_{k,j}$, which is a polynomial of degree $k$, this is
  consistent with the fact that $h^d$ itself is defined up to polynomials
  of degree $\lfloor s-\frac{d}{2}\rfloor$.
\end{remark}

\begin{remark}
  From Theorem \ref{thm:spherical_decomposition}, one can analyze the
  average process in an arbitrary dimension by understanding the whole
  spherical decomposition of the $\FGF$ in dimensions 2 and 3 with the same
  index $s$. We remark that the distribution of the coefficient processes of
  $\FGF_{\frac{3}{2}}(\R^2) $ and $\FGF_{2}(\R^3) $ have been
  explicitly computed in \cite{mckean1963brownian}. Furthermore,
  \cite{mckean1963brownian} computes the coefficient processes for L\'evy
  Brownian motion ($\FGF$ with Hurst parameter $H=1/2$) in any dimension
  and gives the explicit covariance structure for $d\in \{2,3\}$.  In
  principle, we can also represent the covariance kernel for other values
  of $s$ with an integral involving a 2- or 3-dimensional harmonic polynomial
  and the covariance kernel of $\FGF$. If $d=2$, it involves
  trigonometric functions. If $d=3$, it will further involve associated
  Legendre polynomials; see Chapter 14 of \cite{olver2010nist}.
\end{remark}

When $s$ is a positive integer, we have $(-\Delta_{\R^{d+2k} })^s f
=(-L_{d,k})^s (f)$, where $L_{d,k} f = f''+(d+2k-1)r^{-1}f'$.  In this
case, the inner product of $\mathcal{R}^s_{d,k}$ is given by
$\int_0^{\infty} (-L_{d,k})^s (f)(r)g(r)\,dr$.  Since this inner product is
defined by a differential operator, $\wt{h}^d_{k,j}$ shares the same kind
of Markov property as the $\FGF_s$ when $s$ is an integer, which we
described at the end of Section \ref{sec:projection}: given the values of
$\wt{h}_{d,k,j}$ in the interval $[0,a]$, the conditional law of
$\wt{h}_{d,k,j}$ on the interval $(a,\infty)$ depends only on
$\left\{\wt{h}_{d,k,j} (a), \wt{h}'_{d,k,j} (a),\cdots, \wt{h}^{(s-1)
  }_{d,k,j} (a)\right\}$.

\section{The discrete fractional Gaussian field} \label{sec:dfgf}
\subsection{Fractional gradient} \label{sec:frac_grad}
\makeatletter{}
Recall that if $f:\R^d \to \R$ is differentiable, then the gradient $\nabla
f$ is a vector-valued function on $\R^d$ with the property that for all
$f,g\in \mathcal{S}(\R^d)$, 
\begin{equation} \label{eq:int_by_parts}
\int_{\R^d} \nabla f(x) \cdot \nabla g(x) \,dx = \int_{\R^d} (-\Delta f(x))g(x)\,dx.
\end{equation} 
For $0<s<1$, we will define the \textit{fractional gradient} $\nabla^s f$
so that an analogue of \eqref{eq:int_by_parts} holds with the fractional
Laplacian in place of the usual Laplacian. Rather than a vector-valued
function, however, we define $\nabla^s f$ to be a \textit{function}-valued
function on $\R^d$.  More precisely, if $f:\R^d \to \R$ is measurable, then
we define
\begin{equation} \label{eq:def_frac_grad}
\nabla^s f(x) = \left(y\mapsto \frac{f(x+y)-f(x)}{|y|^{\frac{d}{2} + s}}\right), 
\end{equation} 
where the domain of the function on the right-hand side is $\R^d\setminus
\{0\}$. We will establish the following analogue of the
integration-by-parts formula \eqref{eq:int_by_parts} for the fractional
gradient.
\begin{proposition}
  For all $d\geq 1$, $s\in (0,1)$, and $f,g\in \mathcal{S}(\R^d)$, 
  \begin{equation} \label{eq:frac_int_by_parts} \int_{\R^d} (\nabla^s
    f(x), \nabla^s g(x))_{L^2(\R^d)} \,dx = \int_{\R^d} ((-\Delta)^s f(x))
    g(x) \,dx
  \end{equation}
\end{proposition}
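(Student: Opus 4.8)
The plan is to unfold the left-hand side into the ``Gagliardo-type'' double integral
\[
\int_{\R^d}(\nabla^s f(x),\nabla^s g(x))_{L^2(\R^d)}\,dx
=\int_{\R^d}\int_{\R^d}\frac{(f(x+y)-f(x))(g(x+y)-g(x))}{|y|^{d+2s}}\,dy\,dx,
\]
and then to recognize the inner integral, after a single change of variables, as the second-difference representation of $(-\La)^s f$ furnished by Proposition~\ref{prop:diff_quo}. First I would record absolute convergence of this double integral: since $f,g\in\s(\R^d)$ we have $f(x+y)-f(x)=O(|y|)$ and $g(x+y)-g(x)=O(|y|)$ (with the implied constant a Schwartz-type function of $x$ for $|y|\le 1$), so the integrand is $O(|y|^{2-d-2s})$ near $y=0$, which is integrable precisely because $s<1$; for $|y|\ge 1$ one bounds $\int_{\R^d}|f(x+y)-f(x)||g(x+y)-g(x)|\,dx\le 4\|f\|_{L^2}\|g\|_{L^2}$ by Cauchy--Schwarz and Plancherel, and $|y|^{-d-2s}$ is integrable at infinity. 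This legitimizes Fubini throughout.

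Next I would show that, after integration in $x$, the paired product becomes a symmetric second difference against $g(x)$. The numerator difference
\[
(f(x+y)-f(x))(g(x+y)-g(x))-(2f(x)-f(x+y)-f(x-y))g(x)
\]
equals $g(x+y)\bigl(f(x+y)-f(x)\bigr)-g(x)\bigl(f(x)-f(x-y)\bigr)$, and the substitution $x\mapsto x-y$ in the first term shows its $x$-integral vanishes. Since both the original double integral and $\iint\frac{2f(x)-f(x+y)-f(x-y)}{|y|^{d+2s}}g(x)\,dy\,dx$ are absolutely convergent (for the latter, the inner $y$-integral is $O(|y|^2)$ near $0$ and has the same large-$|y|$ bound as before, while the Schwartz decay of $g$ handles the $x$-integration), one may subtract them and apply Fubini to conclude they are equal. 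Then Proposition~\ref{prop:diff_quo} identifies $\int_{\R^d}\frac{2f(x)-f(x+y)-f(x-y)}{|y|^{d+2s}}\,dy$ with a constant multiple of $(-\La)^s f(x)$, whose polynomial decay (Proposition~\ref{prop:fraclapdecay}) ensures the outer pairing with $g$ converges; tracking the constant $C(d,s)=\bigl(\int_{\R^d}(1-\cos x_1)|x|^{-d-2s}\,dx\bigr)^{-1}$ yields \eqref{eq:frac_int_by_parts}. The main obstacle is exactly this bookkeeping: one must resist expanding the product into its four summands, since the individual integrals $\iint f(x+y)g(x)|y|^{-d-2s}\,dy\,dx$ diverge near $y=0$ once $s\ge 1/2$, so every rearrangement must keep the difference quotients paired and use only translation invariance of Lebesgue measure.

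As an alternative I would give a Fourier-side proof: by Plancherel, $\int_{\R^d}(f(x+y)-f(x))(g(x+y)-g(x))\,dx=\int_{\R^d}|e^{iy\cdot\xi}-1|^2\wh{f}(\xi)\overline{\wh{g}(\xi)}\,d\xi$, and a scaling-and-rotation argument gives $\int_{\R^d}\frac{|e^{iy\cdot\xi}-1|^2}{|y|^{d+2s}}\,dy=2\,C(d,s)^{-1}|\xi|^{2s}$, whence $\int_{\R^d}(\nabla^s f,\nabla^s g)_{L^2}\,dx=2\,C(d,s)^{-1}\int_{\R^d}|\xi|^{2s}\wh{f}(\xi)\overline{\wh{g}(\xi)}\,d\xi=2\,C(d,s)^{-1}\int_{\R^d}((-\La)^s f)\,g$, using $\wh{(-\La)^s f}(\xi)=|\xi|^{2s}\wh{f}(\xi)$. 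Both routes reduce the statement to the computation of $C(d,s)$ already encoded in Proposition~\ref{prop:diff_quo}, so I would present the shorter one and mention the other in a remark.
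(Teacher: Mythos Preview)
Your first route---unfold into the Gagliardo double integral, use translation invariance in $x$ to convert the product of first differences into a second difference paired against $g$, then invoke Proposition~\ref{prop:diff_quo}---is the paper's argument. The paper polarizes to $f=g$ and regroups $|f(x+y)-f(x)|^2=[2f(x)^2-2f(x)f(x+y)]+[f(x+y)^2-f(x)^2]$ before shifting $x$ in the second bracket; you keep general $f,g$ and verify the analogous cancellation directly. These are the same maneuver, and your care about absolute convergence is a genuine improvement: the paper's two brackets are each only $O(|y|)$ near $y=0$, hence not separately integrable against $|y|^{-d-2s}$ once $s\ge\tfrac12$, so the split really does require the justification you supply (both target double integrals absolutely convergent, subtract, Fubini in $x$ first).

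Your Fourier alternative is not in the paper and is the cleaner route. One point to fix in either approach: your own Fourier computation correctly lands on $\tfrac{2}{C(d,s)}\int_{\R^d}((-\Delta)^s f)\,g$, not $\int_{\R^d}((-\Delta)^s f)\,g$, and the same factor emerges if you track constants through Proposition~\ref{prop:diff_quo} on the real-variable side. So \eqref{eq:frac_int_by_parts} as displayed is off by the factor $2/C(d,s)$ (the paper's proof suppresses it as well); you should flag this rather than assert that ``tracking the constant $C(d,s)$ yields \eqref{eq:frac_int_by_parts}.''
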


Note that we have replaced the gradient and Laplacian with their fractional
counterparts, and we replaced the dot product with an $L^2(\R^d)$ inner
product. 
\begin{proof}
  Since each side of \eqref{eq:frac_int_by_parts} is a bilinear form in
  $f$ and $g$, it suffices to show that the formula holds with $f=g$. We
  simplify the left-hand side of \eqref{eq:frac_int_by_parts} to obtain
  \begin{align*}
    \int_{\R^d}\int_{\R^d} &|(\nabla^s f(x))(y)|^2 \,dy \,dx \\ 
    &= \int_{\R^d}\int_{\R^d} \frac{|f(x+y)-f(x)|^2}{|y|^{d+2s}} \,dy \,dx \\ 
    &= \int_{\R^d}\int_{\R^d} \frac{f(x)^2 -2f(x)f(x+y)+f(x+y)^2}{|y|^{d+2s}}
    \,dy \,dx \\ 
    &=\int_{\R^d}\int_{\R^d} \frac{[2f(x)^2 -2f(x)f(x+y)]+[f(x+y)^2-f(x)]}{|y|^{d+2s}}
    \,dy \,dx.
  \end{align*}
  Changing variables for $x+y$ in the second square-bracketed expression
  shows that the left-hand side of \eqref{eq:frac_int_by_parts} is equal
  to 
  \[
  \int_{\R^d}\int_{\R^d} f(x) \frac{f(x+y) - 2f(x) + f(x-y)}{|y|^{d+2s}}
  \,dy \,dx, 
  \]
which equals the right-hand side of \eqref{eq:frac_int_by_parts} by
Proposition~\ref{prop:diff_quo}. 
\end{proof}

\makeatletter{}
\subsection{The discrete fractional Gaussian field}

In this section we define a sequence of discrete random distributions
converging in law to the fractional Gaussian field $\FGF_s(D)$, where
$s\in(0,1)$ and $D\subset \R^d$ is a sufficiently regular bounded
domain. We follow the strategy of \cite{caputo2000harmonic} and prove
convergence using a random walk representation of the field
covariances. This method was introduced by Dynkin
\cite{dynkin1980markov}. 

Suppose that $D\subset \R^d$ is a bounded domain and $s\in(0,1)$. For
$\delta > 0$, define $V^\delta\colonequals \delta \Z^d \cap D$. Recall that
the zero-boundary discrete Gaussian free field (DGFF) is defined to be the
mean-zero Gaussian field with density at $f\in \R^{V^\delta}$ proportional
to
\begin{equation} \label{eq:DGFF_density} 
\exp\left(-\frac{1}{2}\sum_{(x,y)\in (\delta \Z^d)\times(\delta
    \Z^d)}C_{d}\one_{|x-y|=\delta}|f(x)-f(y)|^2\delta^{d}\right),
\end{equation} 
where $C_d$ is a constant and where we interpret the expression in
parentheses as a quadratic form in the variables $\{f(x)\,:x \in \delta
\Z^d \cap D\}$ by substituting zero for each instance of the variable
$f(x)$ for all $x\notin D$. Observing that the sum in
\eqref{eq:DGFF_density} is a rescaled discretized version of the $L^2$ norm
of the gradient of $f$, we define the zero-boundary discrete fractional
Gaussian field $\DFGF_s(D)$ by replacing this expression with a rescaled
discretized $L^2$ norm of the fractional gradient of $f$. More precisely,
we let
\[
C_{d,s} = \left(\int_{\R^d} (1-\cos x_1)\,|x|^{-d-2s}\,dx\right)^{-1}, \quad
\text{where }x = (x_1,\ldots,x_d), 
\] 
and define
 $h^\delta \sim \DFGF^\delta_s(D)$ to be a Gaussian function
$h^\delta$ with density at $f\in \R^{V^\delta}$ proportional to
\[
\exp\left(-\frac{1}{2}\sum_{(x,y) \in (\delta \Z^d)^2, \: x\neq y}C_{d,s}\frac{|f(x)-
    f(y)|^2}{|x-y|^{d+2s}}\delta^{d}\right), 
\]
where we interpret the expression in parentheses as a quadratic form in the
variables $\{f(x)\,:x \in \delta \Z^d \cap D\}$ (as we did for the
DGFF). Observe that this quadratic form includes long-range interactions,
unlike the quadratic form for the GFF which includes only nearest-neighbor
interactions. The constant $C_{d,s}$ is chosen so that the discrete FGF
converges to the FGF with no further normalization--see
\eqref{eq:onepointconv} below to understand the role that this constant
plays in the calculation.

We interpret $h^\delta$ as a linear functional on $C_c^\infty(D)$ by
setting 
\begin{equation}\label{eq:discrete_def}(h^\delta,\phi)\colonequals \sum_{x
    \in
  V^\delta}h^\delta(x)\phi(x)\delta^d, \quad \text{for all }\phi\in
C_c^\infty(D).
\end{equation} 
To motivate \eqref{eq:discrete_def}, we note that the right-hand side is
approximately the same as the integral of an interpolation of $h^\delta$
against $\phi$. The following theorem is a rigorous formulation of the idea
that the DFGF converges to the FGF as $\delta \to 0$ when $D$ is
sufficiently regular. The idea of its proof is to compare a random walk
describing the covariance structure of the DFGF to the $2s$-stable L\'evy
process describing FGF covariances. Recall that $D$ is said to be $C^{1,1}$
if for every $z\in \partial D$, there exists $r>0$ such that $B(z,r)
\cap \partial D$ is the graph of a function whose first derivatives are
Lipschitz \cite{chen1998estimates}.

\begin{proposition}
  Let $D\subset \R^d$ be a bounded $C^{1,1}$ domain, and let $s\in (0,1)$. The
  discrete fractional Gaussian field $h^\delta\sim \DFGF_s(D)$ converges to
  the fractional Gaussian field $h\sim \FGF_s(D)$ in the sense that for
  any finite collection of test functions $\phi_1,\ldots,\phi_n\in
  C_c^\infty(D)$, we have
  \begin{equation} \label{eq:discrete_convergence} 
  ((h^\delta,\phi_1),\ldots,(h^\delta,\phi_n)) \to ((h,\phi_1),\ldots,(h,\phi_n))
  \end{equation} 
  in distribution as $\delta \to 0$. 
\end{proposition}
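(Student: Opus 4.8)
The plan is to prove \eqref{eq:discrete_convergence} by showing that the covariance structure of $h^\delta$ converges to that of $h\sim\FGF_s(D)$; since both the $h^\delta$ and the limit are centered Gaussian, convergence of finite-dimensional covariances $\Cov[(h^\delta,\phi_i),(h^\delta,\phi_j)]\to \Cov[(h,\phi_i),(h,\phi_j)]$ implies \eqref{eq:discrete_convergence}. By bilinearity it suffices to handle $\E[(h^\delta,\phi)^2]$ for a fixed $\phi\in C_c^\infty(D)$, together with the polarization identity. The key is a \emph{random walk representation} of the DFGF covariances. The quadratic form in the exponent defining $h^\delta$ is $\tfrac12\langle f, (-\La^\delta) f\rangle$ for a symmetric nonnegative operator $-\La^\delta$ on $\R^{V^\delta}$ given by $(-\La^\delta f)(x) = \sum_{y\in\delta\Z^d,\,y\neq x} C_{d,s}|x-y|^{-d-2s}(f(x)-f(y))\delta^d$, with the Dirichlet boundary condition $f\equiv 0$ off $D$; hence the covariance of $h^\delta$ is the Green's function $G^\delta(x,y)$ of $-\La^\delta$ killed on exiting $V^\delta$, and
\[
\E[(h^\delta,\phi)^2] = \sum_{x,y\in V^\delta} G^\delta(x,y)\phi(x)\phi(y)\,\delta^{2d}.
\]
Following Dynkin's method as adapted in \cite{caputo2000harmonic}, $G^\delta$ is the Green's function of the continuous-time random walk $X^\delta$ on $\delta\Z^d$ with jump rates $C_{d,s}|x-y|^{-d-2s}\delta^d$, killed upon leaving $D$. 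This walk is a natural discretization of the isotropic $2s$-stable L\'evy process, and our normalization constant $C_{d,s}$ is exactly the one appearing in Proposition~\ref{prop:diff_quo}, chosen so that the generator of $X^\delta$ converges to $(-\La)^s$ with no further rescaling.

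The heart of the argument is then: (1) show $X^\delta$ converges (say in the Skorokhod topology, or in the weaker sense of convergence of exit distributions and occupation times) to the $2s$-stable symmetric L\'evy process $X$ killed on exiting $D$; (2) deduce convergence of the killed Green's functions, i.e.\ $G^\delta \to G^s_D$ in a suitable weak sense, where $G^s_D$ is the $\FGF_s(D)$ covariance kernel identified in Section~\ref{sec:domain} via the potential theory of $2s$-stable processes (formula \eqref{eq:green01} for the ball, and $G^s_D(x,y)=u(x,y)-\E^x[u(X_{\tau_D},y)]$ in general); and (3) conclude that the Riemann sums $\sum_{x,y} G^\delta(x,y)\phi(x)\phi(y)\delta^{2d}$ converge to $\iint G^s_D(x,y)\phi(x)\phi(y)\,dx\,dy = \E[(h,\phi)^2]$. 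For step (1), the generator convergence plus tightness gives weak convergence of the walks; the $C^{1,1}$ regularity of $\partial D$ is used here to control the exit behaviour --- it guarantees that $\partial D$ is non-polar and that $\p^x[X_{\tau_D}\in\partial D]=0$, so the killed processes (and hence their Green's functions) pass to the limit without boundary pathologies. A convenient way to organize steps (1)--(2) is to write $G^\delta(x,y) = u^\delta(x,y) - \E^x[u^\delta(X^\delta_{\tau^\delta},y)]$ where $u^\delta$ is the (unkilled) potential kernel of $X^\delta$, show $u^\delta\to u$ locally uniformly away from the diagonal (this is again a generator/scaling computation, comparing to $|x-y|^{2s-d}$), and show the harmonic correction terms converge using the convergence of exit distributions $X^\delta_{\tau^\delta}\Rightarrow X_{\tau_D}$.

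The main obstacle I expect is step (2)--(3) near the diagonal: $G^s_D(x,y)$ has an integrable singularity $\sim |x-y|^{2s-d}$ when $2s<d$, and one must show the discrete Green's functions $G^\delta$ do not develop a worse singularity or extra mass as $\delta\to0$. This requires a uniform-in-$\delta$ upper bound $G^\delta(x,y)\le C|x-y|^{2s-d}$ (and $G^\delta(x,x)\le C\delta^{2s-d}$) together with a domination argument so that the singular part contributes negligibly to the Riemann sum against the smooth, compactly supported $\phi$; such heat-kernel/Green-function estimates for killed stable-like walks are available from the literature on stable processes (e.g.\ \cite{chen1998estimates} and the references on discrete stable-like operators used in \cite{caputo2000harmonic}). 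Away from the diagonal the convergence is genuinely ``soft'' (weak convergence of walks plus bounded convergence), so once the uniform near-diagonal bound is in hand, splitting the double sum into a diagonal neighbourhood and its complement and applying dominated convergence finishes the proof. A final routine point is that $(h^\delta,\phi)$ as defined in \eqref{eq:discrete_def} is exactly a Riemann sum, so no interpolation estimate is needed --- the identification of $\E[(h^\delta,\phi)^2]$ with $\sum G^\delta(x,y)\phi(x)\phi(y)\delta^{2d}$ is exact.
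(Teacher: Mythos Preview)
Your proposal is correct and follows the same overall strategy as the paper: reduce to convergence of $\E[(h^\delta,\phi)^2]$, use the random walk representation of the DFGF covariance, and pass to the limit via convergence of the rescaled walk to the $2s$-stable process. The substantive difference is in how the near-diagonal issue in your steps (2)--(3) is handled.

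The paper does not decompose $G^\delta = u^\delta - (\text{harmonic correction})$ or invoke uniform pointwise bounds of the form $G^\delta(x,y)\le C|x-y|^{2s-d}$. Instead it fixes $x$ and observes that the inner sum $\sum_{y\in V^\delta}G^\delta(x,y)\phi(y)\,\delta^d$ is \emph{exactly} the integral of $\phi$ against the occupation measure $\mu_{Y^\delta,x}$ of the (time-rescaled) killed walk. It then proves a separate lemma showing that $J_1$-convergence $Y^\delta\Rightarrow Y$ (obtained from one-point convergence of the step law plus Skorokhod's theorem for processes with independent increments) implies weak convergence of the killed occupation measures $\mu_{Y^\delta,x}\to\mu_{Y,x}$. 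Since $\phi$ is bounded and continuous, this gives convergence of the inner sum with no near-diagonal analysis whatsoever; the outer sum in $x$ is then an honest Riemann sum of a bounded integrand. The $C^{1,1}$ hypothesis enters only in that lemma, via the Poisson-kernel estimates of \cite{chen1998estimates}, to guarantee that with high probability the stable process jumps well clear of $\partial D$ on exit, so that stopping behaves continuously under the $J_1$ limit.

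Your route via pointwise Green's function estimates would also succeed and is in some ways more informative (one gets quantitative control on $G^\delta$ itself), but it demands more input from discrete potential theory. The paper's occupation-measure argument is softer: once $J_1$ convergence is in hand, the singularity of $G^s_D$ is absorbed automatically by testing against the smooth $\phi$ before taking any limit.
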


\begin{proof}
  Because both sides of \eqref{eq:discrete_convergence} are multivariate
  Gaussians and since $h^\delta$ and $h$ are linear, it suffices to show
  that $\E[(h^\delta,\phi)^2] \to \E[(h,\phi)^2]$ for all $\phi\in
  C_c^\infty(D)$. From \eqref{eq:discrete_def} we calculate
  \begin{equation} \label{eq:varsum} 
  \E\left[(h^\delta,\phi)^2\right] = \sum_{(x,y)\in V^\delta \times
    V^\delta} \E[h^\delta(x)h^\delta(y)]
  \phi(x)\phi(y)\,\delta^{2d}. 
  \end{equation} 
  Define an independent family of exponential clocks indexed by edges 
  \[
  \{(w,z)\,: w \in \delta \Z^d , z \in \delta \Z^d, \text{ and }w\neq z \}
  \] 
  such that the intensity of the clock corresponding to $(w,z)$ is
  $C_{d,s}\delta^{d}|w-z|^{-d-2s}$. Define a continuous-time process
  $(X^\delta_t)_{t\geq 0}$ which starts at $x \in V^\delta$ and moves
  from its current vertex $w$ to a new vertex $z\in \delta \Z^d$ whenever
  the clock associated with $(w,z)$ rings. 
                    Then 
  \begin{equation} \label{eq:walktime}
  \E[h^\delta(x)h^\delta(y)] = \E\left[\int_0^T \one_{\{X^\delta_t = y\}} \,dt\right],
  \end{equation} 
  where $T$ is the exit time from $D$ \cite[Section
  4.1]{sheffield2007gaussian}.

  We define a discrete-time version $(\wt{Y}^\delta_n)_{n\geq0}$ of the
  process $(X^\delta_t)_{t\geq 0}$ which tracks the sequence of vertices
  visited by $X^\delta$. That is, $\wt{Y}^\delta_n$ is the vertex at which
  $X^\delta_t$ is located after its $n$th jump. Let
  \[
  \gamma_{d,s} \colonequals C_{d,s}^{-1} \sum_{z\in \Z^d\setminus\{0\}} |z|^{-d-2s}. 
  \] 
  From $\wt{Y}^\delta$ we define the continuous-time process
  $(Y_t^\delta)_{t\geq 0}$ by $Y_t^\delta=\wt{Y}^\delta_{\lfloor
    \gamma_{d,s}^{-1}\delta^{-2s} t\rfloor}$.  Since the minimum of a
  collection of exponential random variables with intensities
  $(\lambda_i)_{i \in I}$ is exponential random variable with intensity
  $\sum_{i\in I} \lambda_i$, \eqref{eq:walktime} implies that
  \begin{align*}
    \E[h^\delta(x)h^\delta(y)] &= \E^x[\#\{n\,:\,\wt{Y}^\delta_n =
    y\}]\left(\sum_{z\in \delta \Z^d}
      C_{d,s}\delta^d|y-z|^{-d-2s}\right)^{-1} \\
    &= \E^x\left[\int_0^\infty \one_{\{Y^\delta_t = y\}}\,dt\right] \times \frac{\delta^{-2s} \gamma_{d,s}^{-1}
    C_{d,s}^{-1} \delta^{-d+d+2s}}{\sum_{z \in \Z^d \setminus
         \{0\}}|z|^{-d-2s}} \\ 
     &= \E^x\left[\int_0^\infty \one_{\{Y^\delta_t = y\}}\,dt\right],
  \end{align*}
  by our choice of $\gamma_{s,d}$ and $C_{s,d}$. If $Z$ is a Markov
  process, we denote by $p_t(x,y)\,dy = p^Z_t(x,y)\,dy$ the density of the
  law of $Z_t$ given $Z_0 = x$ (assuming that this law is absolutely
  continuous with respect to Lebesgue measure). Recall that the symmetric
  $2s$-stable process $(Y_{t\geq 0} )$ is the L\'evy process on $\R^d$
  whose transition kernel density $p_t$ has Fourier transform
  $\xi\mapsto\exp(-t|\xi|^{2s})$.

  By calculating the characteristic function of the step distribution of
  $\wt{Y}^\delta$, (see Remark 5.1 in \cite{caputo2000harmonic} for
  details), we see that 
  \begin{equation} \label{eq:onepointconv}
  Y_1^\delta \stackrel{\text{law}}{\to} Y_1 
  \end{equation} 
  By Theorem 2.7 in \cite{skorokhod1957limit}, this implies that
  $(Y^\delta_t)_{t\geq 0}$ converges in distribution to $(Y_t)_{t\geq 0}$
  with respect to the Skorokhod $J_1$ metric \cite{skorokhod1956limit},
  which is defined as follows. For an interval $I\subset [0,\infty)$, we
  denote by $\mathcal{D}(I,\R^d)$ the set of functions from $I$ to $\R^d$
  which are right-continuous with left limits, and for $t>0$ we denote by
  $\Lambda_t$ the set of increasing homeomorphisms from $[0,t]$ to itself.
  For $f,g\in \mathcal{D}([0,t],\R^d)$, we define the metric $d_{J_1(t)}$
  by
  \[
  d_{J_1(t)}(f,g) = \inf_{\lambda \in \Lambda_t} \max\left(\|f\circ \lambda -
    g\|_{\infty}, \: \|\lambda - \text{id}\|_{\infty}\right), 
  \]
  where $\text{id}(s) \colonequals s$. Then we define the metric
  \[
  d_{J_1}(f,g) = \int_0^\infty e^{-t} \min(1,d_{J_1(t)}(f,g))\,dt
  \]
  for $f,g\in \mathcal{D}([0,\infty),\R^d)$ \cite{melbourne2013weak}. A
  different definition that is equivalent and is also called the $J_1$
  metric is given in \cite{billingsley1999convergence}, where it is also
  proved that $d_{J_1}(f_n,f) \to 0$ if and only if $d_{J_1(t)}(\left. f_n
  \right|_{[0,t]},\left. f \right|_{[0,t]}) \to 0$ for every continuity
  point $t$ of $f$.

  Given a stochastic process $X$ started in $D$, denote by $T$ the exit
  time of the process from $D$. Denote by $\mu_{X,x}$ the occupation
  measure $\mu_{X,x}(A) \colonequals \E^x[\int_0^T \one_{X_t \in A}\,dt]$
  for all Borel sets $A\subset D$. We have $T<\infty$ almost surely, and
  $\mu_{X,x}$ is a finite measure---see the proof of
  Lemma~\ref{lem:occupation_convergence} where a stronger statement is
  proved. By Lemma~\ref{lem:occupation_convergence}, $\mu_{X_n,x} \to
  \mu_{X,x}$ weakly. Since weak convergence implies convergence of
  integrals against bounded continuous functions, we have
  \begin{equation} \label{eq:innerterm} 
  \sum_{y \in V^\delta} \E^x\left[\int_0^T \one_{\{Y_t^\delta = y\}}\,dt\right]
  \phi(y) \delta^d = \int_D \phi(y) \mu_{Y_t,x}(dy) + o(1), 
  \end{equation} 
  where the quantity denoted $o(1)$ is uniformly bounded as $x$ varies over
  the support of $\phi$ and tends to 0 as $\delta\to 0$ for each fixed
  $x$. Substituting \eqref{eq:innerterm} into \eqref{eq:varsum} and using
  the convergence of the Riemann integral (as well as dominated convergence
  to handle the $o(1)$ term), we obtain
  \begin{equation} \label{eq:discrete_greens}
    \E\left[(h^\delta,\phi)^2\right] \to \int_{D\times
      D}\phi(x)\mu_{Y,x}(dy)\,dx = \int_{D\times D} G(x,y) \,dx \,dy
  \end{equation} 
  as $\delta \to 0$, where $G$ is the density of the occupation measure
  (that is, the Green's function) of $Y$. This Green's function is in turn
  equal to $G_D^s(x,y)$ (see \eqref{eq:green_D}), the Green's function of
  the fractional Laplacian \cite{chen1998estimates}. Therefore, the
  right-hand side of \eqref{eq:discrete_greens} is equal to $\E[(h,f)^2]$,
  as desired.
\end{proof}
\begin{lemma} \label{lem:occupation_convergence}
  Let $(X_n)_{n\geq 1}$ be a sequence of processes in $\R^d$ converging in
  law with respect to the $J_1$ metric to a symmetric $\alpha$-stable
  process $X$. Let $D\subset \R^d$ be a $C^{1,1}$ domain. If $T$ is the
  hitting time of $\R^d \setminus D$, then the occupation measure of
  $X_n^T$ converges weakly to the occupation measure of $X^T$.
\end{lemma}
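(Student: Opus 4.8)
The plan is to combine the Skorokhod representation theorem with an almost-sure continuity property of the exit functional, and then remove the coupling by a uniform integrability argument. Write $\zeta_n$ for the hitting time of $\R^d\setminus D$ by $X_n$ and $\zeta$ for that of $X$, and let $\mu_n$, $\mu$ be the corresponding occupation measures, so that $\int_{\R^d}\phi\,d\mu_n=\E^x\bigl[\int_0^{\zeta_n}\phi(X_n(t))\,dt\bigr]$ and similarly for $\mu$; what must be shown is $\int\phi\,d\mu_n\to\int\phi\,d\mu$ for every bounded continuous $\phi$.

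First I would invoke the Skorokhod representation theorem to realize the $X_n$ and $X$ (all started at $x$) on a common probability space so that $X_n\to X$ almost surely in the $J_1$ topology. The key step is then to show $\zeta_n\to\zeta$ almost surely. Here one uses that, for $D$ a $C^{1,1}$ domain and $X$ a symmetric $\alpha$-stable process, almost every path of $X$ exits $D$ by jumping from a point of the open set $D$ into the open set $\R^d\setminus\overline D$, and does not touch $\partial D$ before time $\zeta$; these are facts from the potential theory of stable processes in $C^{1,1}$ domains (see \cite{chen1998estimates} and references therein), and this is precisely where the $C^{1,1}$ hypothesis enters. Granting them: the jump of $X$ at $\zeta$ into the open exterior is matched, along any $J_1$-convergent sequence, by a jump of $X_n$ at a time converging to $\zeta$ whose destination lies in $\R^d\setminus\overline D$, so $\limsup_n\zeta_n\le\zeta$; and for each $\delta>0$ the path of $X$ stays in a compact subset of the open set $D$ throughout $[0,\zeta-\delta]$, so $X_n$ stays in $D$ there for $n$ large, giving $\liminf_n\zeta_n\ge\zeta$.

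Next I would upgrade this to convergence of the occupation measures along the coupling. For fixed bounded continuous $\phi$, the functions $t\mapsto\int_0^t\phi(X_n(u))\,du$ are Lipschitz with constant $\|\phi\|_\infty$ uniformly in $n$, and $J_1$-convergence $X_n\to X$ forces them to converge, uniformly on compact time intervals, to $t\mapsto\int_0^t\phi(X(u))\,du$ (a standard consequence of $J_1$-convergence, since $\phi$ is bounded and continuous and càdlàg paths are bounded on compacts). Combining this with $\zeta_n\to\zeta$ and the continuity of the limit function in its upper endpoint yields $\int_0^{\zeta_n}\phi(X_n(t))\,dt\to\int_0^{\zeta}\phi(X(t))\,dt$ almost surely. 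Finally, to pass from this almost-sure statement to convergence of the expectations $\int\phi\,d\mu_n$, it suffices that the random variables $\int_0^{\zeta_n}\phi(X_n(t))\,dt$ be uniformly integrable; since they are bounded by $\|\phi\|_\infty\zeta_n$, this follows from a uniform exit-time bound $\sup_n\E^x[\zeta_n]<\infty$, which for the $2s$-stable-like random walks $Y^\delta$ to which this lemma is applied is a consequence of the standard exit-time estimates for the bounded domain $D$ (obtained by comparison with the limiting $2s$-stable process).

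The hard part will be the almost-sure continuity of the exit functional: showing that the set of càdlàg paths at which $\zeta$ — and hence the stopped path and its occupation measure — fails to depend $J_1$-continuously on the path is null under the law of $X$. Such discontinuities arise only for paths that creep out of $D$ through $\partial D$ or that visit $\partial D$ before exiting, and excluding these is exactly where the $C^{1,1}$ regularity of $D$ (hence regularity of every boundary point for $\R^d\setminus\overline D$) and the fine potential theory of the $2s$-stable process are used; the remaining ingredients are soft.
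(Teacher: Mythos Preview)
Your strategy is sound and, in outline, cleaner than the paper's. Both proofs pass through Skorokhod representation and both spend the $C^{1,1}$ hypothesis on the same potential-theoretic fact about the symmetric $\alpha$-stable process (via \cite{chen1998estimates}): almost surely $X$ does not reach $\partial D$ before time $\zeta$ and exits by a jump from the open set $D$ into the open complement. The paper, however, does not isolate the statement $\zeta_n\to\zeta$ a.s.; instead it works directly with the L\'evy--Prohorov inequalities $\mu_n(A)\le\mu(A^\eps)+\eps$, introducing an auxiliary ``bad'' event $B_\eta$ (that $X$, upon first leaving the shrunken domain $D_\eta$, lands within distance $\eta$ of $\partial D$) and controlling its contribution by Cauchy--Schwarz together with an exponential tail for the exit time. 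Your route---a.s.\ convergence of exit times, hence a.s.\ convergence of $\int_0^{\zeta_n}\phi(X_n)$, then pass to expectations---is the standard continuous-mapping argument and makes the role of each hypothesis more transparent.

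One genuine correction: boundedness in $L^1$, i.e.\ $\sup_n\E^x[\zeta_n]<\infty$, is \emph{not} sufficient for uniform integrability of $\{\zeta_n\}$, and hence not for passing the a.s.\ limit through the expectation. You need a uniform higher-moment or tail bound, e.g.\ $\sup_n\E^x[\zeta_n^{1+\delta}]<\infty$ or $\sup_n\P^x(\zeta_n>t)\le Ce^{-ct}$. For the processes to which the lemma is actually applied this is easy: at each jump of the rescaled walk $Y^\delta$ there is a probability bounded below (uniformly in $\delta$ and in the current position in $D$) of jumping out of the bounded domain $D$, so the number of jumps before exit is stochastically dominated by a geometric random variable with parameter independent of $\delta$, giving uniform exponential tails for $\zeta_n$. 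With that amendment your argument goes through. You are also right to flag that the lemma \emph{as stated} needs such a hypothesis on the approximating processes; the paper's own proof implicitly uses a similar uniform control.
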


\begin{proof}
  For $n\geq 1$, denote by $\mu_n$ the occupation measure of $X_n^T$:
  \[
  \mu_n(A) \colonequals \E\left[\int_0^T \one_{X_n^T(t)\in A}\,dt\right],
  \]
  Similarly, define $\mu$ to be the occupation measure of $X^T$. 

  Recall the following definition of the L\'evy-Prohorov metric $\pi$ on
  the set of finite measures on $\R^d$. For $A\subset \R^d$ a Borel set,
  denote by $A^\eps$ the $\eps$-neighborhood of $A$, defined by 
  \[A^\eps \colonequals \{x\in \R^d\,:\,\exists\, y \in A \text{ such that
  }|x-y|<\eps\}.\] 
  Define for finite measures $\mu$ and $\nu$
  \[
  \pi(\mu,\nu) \colonequals \inf_{\eps>0}\{\mu(A)\leq \nu(A^\eps)+\eps \text{
    and }\nu(A)\leq \mu(A^\eps)+\eps\text{ for all }A\text{ Borel}\}.
  \]
  Recall that for probability measures, convergence with respect to $\pi$
  is equivalent to weak convergence
  \cite{billingsley1999convergence}. Since weak convergence of a sequence
  of finite measures $(\mu_n)_{n\geq 1}$ to a nonzero measure $\mu$ is
  equivalent to weak convergence of the normalized measures
  $\mu_n/\mu_n(\R^d) \to \mu/\mu(\R^d)$ along with convergence of the total
  mass (that is, $\mu_n(\R^d) \to \mu(\R^d)$), we see that convergence with
  respect to $\pi$ is equivalent to weak convergence for finite measures
  too. Therefore, it suffices to show that for all $\eps>0$ and $A\subset
  \R^d$, we have $\mu_n(A)\leq \mu(A^\eps) + \eps$ and $\mu(A)\leq
  \mu_n(A^\eps) + \eps$. Since $\mu_n(\R^d \setminus D) = \mu(\R^d
  \setminus D) = 0$, it suffices to consider $A\subset D$.  For $\eta > 0$,
  define $D_\eta = \{x\in D\,:\,\dist(x,\partial D) > \eta\}$. For $\eta >
  0$, define $B_\eta$ to be the event that $X$ stopped upon exiting
  $D_{\eta}$ is contained in $D^\eta$.  By integrating the upper bound in
  Theorem 1.5 in \cite{chen1998estimates}, we conclude that $B_\eta$ has
  probability tending to 0 as $\eta \to 0$. Furthermore, for each positive
  integer $n$, the event $E_n$ that $|X_{n+1} - X_n|$ is larger than the
  diameter of $D$ has probability bounded below. Since the events
  $(E_n)_{n\geq 1}$ are independent, it follows the amount of time $X$
  spends in $D$ has an exponential tail. Therefore, given $\eps>0$ we may
  choose $\eta \in (0,\eps/2)$ such that
  \[
  \E\left[\int_0^T \one_{\{X(t) \in A\}}\,dt \, \one_{B_\eta}\right] < \eps/2, 
  \]
  by the Cauchy-Schwarz inequality. 

  Since $(D[0,\infty),d_{J_1})$ is separable \cite[Theorem
  16.3]{billingsley1999convergence}, we may use Skorokhod's representation
  theorem \cite[Theorem 6.7]{billingsley1999convergence} to couple
  $(X_n)_{n \geq 1}$ and $X$ in such a way that $d_{J_1}(X_n,X) \to 0$ as
  $n\to\infty$. Choosing $n_0$ large enough that $d_{J_1}(X_n,X) < \eta/2$
  whenever $n \geq n_0$, we have for all $n\geq n_0$, 
  \begin{align*} \E\left[\int_0^T \one_{\{X_n(t) \in A\}}\,dt\right] &=
    \E\left[\int_0^T \one_{\{X_n(t) \in A\}}\,dt(\one_{B_\eta^c} +
      \one_{B_\eta})\right] \\
    &< \E\left[\left(\int_0^T
        \one_{\{X(t) \in A\}}\,dt\right)\one_{B^c_\eta}\right] +
    \frac{\eps}{2}.  
  \end{align*}
  By the definition of the $J_1$ metric, the first term is bounded above by
  \[\E\left[\int_0^T \one_{\{X(t) \in A^\eps\}}\,dt + \eps/2\right],\]
  which gives $\mu_n(A) \leq \mu(A^\eps) + \eps$. We conclude by applying
  the same argument with the roles of $X_n$ and $X$ reversed. 
\end{proof}

\section{Open questions}\label{sec:open}
\makeatletter{}In this section, we will ask some questions regarding the FGF. Section
\ref{sec:level line} presents several questions on level lines, and Section
\ref{sec:other questions} contains other FGF questions. 

\subsection{Questions on level sets}\label{sec:level line}
\begin{enumerate}
\item In dimension 2, $\FGF_{1+\eps}$ is a function for all $\eps>0$. Do
  the level sets of $\FGF_{1+\eps}$ converge to the level sets of the
  Gaussian free field, as defined in \cite{schramm2010contour}? One
    may interpret the mode of convergence to be in probability, with the
    coupling of Proposition~\ref{prop:FGF_coupling}, or in law.
  
  The Hausdorff dimension of the level sets of $\FGF_{1+\eps}(\R^2)$ is
  $2-\epsilon$ \cite{xiao2013recent}, while the Hausdorff dimension of
  $\SLE_4$ is $\frac{3}{2}$. Thus if the level sets of $\FGF_{1+\eps}$ do
  converge to the level sets of the Gaussian free field, then the Hausdorff
  dimension of these sets is not continuous in $\eps$.
\item Let $h$ be an instance of any $\FGF_s$ that is defined as a
  distribution, but not as a function.  One can mollify $h$ with a bump
  function supported on an $\eps$-ball in order to obtain a smooth
  function.  Under what circumstances do the level sets of these mollified
  functions converge to a continuum limit as $\epsilon \to 0$?
\item Instead of mollifying, one could instead try to project $h$ onto some
  subspace of piecewise-polynomial functions, like the projection of the
  two-dimensional GFF in \cite{schramm2010contour} onto the space of
  functions piecewise affine on the triangles of a triangular lattice with
  side length $\epsilon$. It was shown in \cite{schramm2010contour} that in
  the case of the two-dimensional GFF, the level sets of these
  approximations do converge to a continuum limit as $\epsilon\to 0$.  Can
  anything similar be obtained for any other dimension or any other value
  of $s$?
\item In $d$ dimensions, can one consider a $(d-1)$-tuple of independent
  FGFs (understood as a map from $\R^d$ to $\R^{d-1}$) and make sense of
  the scaling limit of the zero level set as a random curve?  Can one
  understand any discrete analogs of this problem? For the fractal
  properties of this curve when the corresponding $\FGF$ is a fractional
  Brownian motion, we refer to \cite{xiao2013recent}.
\end{enumerate}

\subsection{Other questions}\label{sec:other questions}
\begin{enumerate}
\item Are there any non-trivial local set explorations for FGF fields that
  are not defined as functions, as in the Gaussian free field case
  (\cite{millerimaginary1,millerimaginary2, millerimaginary3,
    millerimaginary4})?
\item If we restrict an LGF in $\R^3$ to a curved 2D surface, and
  conformally map that curved surface to a flat surface, can we pull
  back the restricted LGF to the flat surface and obtain a distribution
  whose law is locally absolutely continuous with respect to that of an
  ordinary LGF restricted to the flat surface?
    \end{enumerate}

\newpage
\appendix\section*{Notation}\label{sec:notation}
\makeatletter{}We fix the relation $H=s-d/2$ for the definitions of the following
spaces. \label{pg:notation} We refer the reader to the referenced page
numbers for the spaces' topologies.
{\footnotesize
\begin{center} \renewcommand{\arraystretch}{1.2}
\begin{tabular}{p{2.4cm}p{9.1cm}p{1.5cm}}
  \textbf{Space}&\textbf{Description}&\textbf{Page} \\ 
  $\s(\R^d)$ &  The Schwartz space of real-valued functions on $\R^d$ whose derivatives of all orders exist and decay faster than any polynomial at infinity. & \pageref{not:schw}\\
  $\s'(\R^d)$ &  The space of continuous linear functionals on $\s(\R^d)$. Elements of $\s'(\R^d)$ are called tempered distributions. & \pageref{not:tdist}\\
  $\s_k(\R^d)$ &  For $k\in \{-1,0,1,\ldots\}$, denotes the space of Schwartz functions
  $\phi$ such that $(\partial^\alpha \wh{\phi})(0) = 0$ for all
  multi-indices $\alpha$ such that $|\alpha|\leq k$. Equivalently,
  $\s_k(\R^d)$ is the space of Schwartz functions $\phi$ such that
  $\int_{\R^d} x^\alpha \phi(x)\,dx=0$ whenever $|\alpha|\leq k$. &
  \pageref{not:schwM}\\
  $\s_r(\R^d)$ & For $r\in \R$, denotes $\s_{\max(-1,\lfloor r\rfloor)} (\R^d)$ & \pageref{not:schwM} \\
  $\s'_k(\R^d)$ &  For $k\in \{-1,0,1,\ldots\}$, denotes the space of continuous linear functionals on $\s_k(\R^d)$. Equivalently, $\s'_k(\R^d)$ may defined to be the space $\s'(\R^d)$ of tempered distributions modulo polynomials of degree less than or equal to $k$. &  \pageref{not:tdistM}\\
    $\dot H^s(\R^d)$ & The subspace of $\s_{H}'(\R^d)$ consisting
  of functions whose Fourier transform $\xi\mapsto \wh{f}(\xi)$ is in
  $L^2(|\xi|^{2s}\,d\xi)$ & \pageref{not:Sob}  \\ 
  $\mathcal{U}_s(\R^d)$ &  The space of all functions $\phi\in
  C^\infty(\R^d)$ such that $x\mapsto(1+|x|^{d+2s})(\partial^\alpha
  f)(x)$ is bounded for all multi-indices $\alpha$ &  \pageref{not:yU?} \\
  $(-\Delta)^s\s_k(\R^d)$ & For $k\in\{-1,0,1,2,\ldots\}$ and
  $s>-\frac{1}{2}(d+k+1)$, this space is the range of the injective
  operator $(-\Delta)^s:\s_k(\R^d) \to \mathcal{U}_{s+(k+1)/2}$. 
  & \pageref{not:thatsyU}\\
  $T_s(\R^d)$ & The closure of $\s_\Hu(\R^d)$ in $\dot
  H^{-s}(\R^d)$. This space serves as a test function space for
  $\FGF_s(\R^d)$. & \pageref{not:tsrd}\\
  $C^\infty_c(D)$ &  The space of smooth functions supported on a compact
  subset of a domain $D\subset \R^d$. & \pageref {not:Ccinfty} \\
  $\dH^s_0(D)$ & The closure of $C^\infty_c(D)$ in $\dH^{s}(\R^d)$. & \pageref{not:hs0d} \\
  $T_s(D)$ & The closure of the space of restrictions to $D$ of Schwartz functions under the metric $d(\phi,\psi)=\|\phi-\psi\|_{\dot H^{-s}(D)}$. This space serves as a test function space for $\FGF_s(\R^d)$. & \pageref{not:TsD}\\
  $C^{k,\alpha}(D)$& For $k\in \{0,1,2,\ldots\}$, $\alpha\in
  (0,1)$, and $D\subset \R^d$, denotes the space of functions $f$ on $\R^d$
  such that $\partial^\beta f$ is $\alpha$-H\"older continuous for all
  multi-indices $\beta$ such that $|\beta|\leq k$. &\pageref{not:holder} 
\end{tabular}
\end{center}
}

\newpage

\bibliographystyle{hmralphaabbrv}

\begin{thebibliography}{{Dod}03}

\bibitem[Adl10]{adler2010geometry}
R.~J. Adler.
\newblock {\em The geometry of random fields}.
\newblock Society for Industrial and Applied Mathematics, 2010.

\bibitem[AT07]{adler2007random}
R.~J. Adler and J.~E. Taylor.
\newblock {\em Random fields and geometry}, volume 115.
\newblock Springer, 2007.

\bibitem[Bas98]{bass1998diffusions}
R.~F. Bass.
\newblock {\em Diffusions and elliptic operators}.
\newblock Springer, 1998.

\bibitem[BGR61]{blumenthal1961distribution}
R.~Blumenthal, R.~Getoor, and D.~Ray.
\newblock On the distribution of first hits for the symmetric stable processes.
\newblock {\em Transactions of the American Mathematical Society},
  99(3):540--554, 1961.

\bibitem[BGW83]{bhattacharya1983hurst}
R.~Bhattacharya, V.~K. Gupta, and E.~Waymire.
\newblock The hurst effect under trends.
\newblock {\em Journal of applied probability}, pages 649--662, 1983.

\bibitem[Bil99]{billingsley1999convergence}
P.~Billingsley.
\newblock Convergence of {P}robability {M}easures, {W}iley {S}eries in
  {P}robability and {S}tatistics.
\newblock {\em Wiley, New York}, 1999.

\bibitem[Cap00]{caputo2000harmonic}
P.~Caputo.
\newblock {\em Harmonic Crystals: Statistical Mechanics and Large Deviations}.
\newblock PhD thesis, TU Berlin 2000, http://edocs.
  tu-berlin.de/diss/index.html, 2000.

\bibitem[CD09]{chiles2009geostatistics}
J.-P. Chiles and P.~Delfiner.
\newblock {\em Geostatistics: modeling spatial uncertainty}, volume 497.
\newblock John Wiley \& Sons, 2009.

\bibitem[CDDS11]{capella2011regularity}
A.~Capella, J.~D{\'a}vila, L.~Dupaigne, and Y.~Sire.
\newblock Regularity of radial extremal solutions for some non-local semilinear
  equations.
\newblock {\em Communications in Partial Differential Equations},
  36(8):1353--1384, 2011.

\bibitem[CG11]{chang2011fractional}
S.-Y.~A. Chang and M.~d.~M. Gonz{\'a}lez.
\newblock Fractional {L}aplacian in conformal geometry.
\newblock {\em Advances in Mathematics}, 226(2):1410--1432, 2011.

\bibitem[CI13]{cohenfracfield}
S.~Cohen and J.~Istas.
\newblock {\em Fractional Fields and Applications}, volume~73.
\newblock Springer, 2013.

\bibitem[CS98]{chen1998estimates}
Z.-Q. Chen and R.~Song.
\newblock Estimates on {G}reen functions and {P}oisson kernels for symmetric
  stable processes.
\newblock {\em Mathematische Annalen}, 312(3):465--501, 1998.

\bibitem[CS07]{caffarelli2007extension}
L.~Caffarelli and L.~Silvestre.
\newblock An extension problem related to the fractional {L}aplacian.
\newblock {\em Communications in Partial Differential Equations},
  32(8):1245--1260, 2007.

\bibitem[CSS08]{caffarelli2008regularity}
L.~A. Caffarelli, S.~Salsa, and L.~Silvestre.
\newblock Regularity estimates for the solution and the free boundary of the
  obstacle problem for the fractional {L}aplacian.
\newblock {\em Inventiones Mathematicae}, 171(2):425--461, 2008.

\bibitem[CT10]{cabre2010positive}
X.~Cabr{\'e} and J.~Tan.
\newblock Positive solutions of nonlinear problems involving the square root of
  the {L}aplacian.
\newblock {\em Advances in Mathematics}, 224(5):2052--2093, 2010.

\bibitem[DNPV]{dihitchhiker}
E.~Di~Nezza, G.~Palatucci, and E.~Valdinoci.
\newblock Hitchhiker{'}s guide to the fractional {S}obolev spaces.
\newblock {\em arXiv preprint arxiv:1104.4345}.

\bibitem[Dob79]{dobrushin1979gaussian}
R.~Dobrushin.
\newblock Gaussian and their subordinated self-similar random generalized
  fields.
\newblock {\em The Annals of Probability}, pages 1--28, 1979.

\bibitem[{Dod}03]{2003moco.book.....D}
S.~{Dodelson}.
\newblock {\em {Modern Cosmology}}.
\newblock Amsterdam (Netherlands): Academic Press, 2003.

\bibitem[DRSV]{duplantier2013log}
B.~Duplantier, R.~Rhodes, S.~Sheffield, and V.~Vargas.
\newblock Log-correlated {G}aussian field: an overview.
\newblock {\em In preparation}.

\bibitem[DS11]{duplantier2011liouville}
B.~Duplantier and S.~Sheffield.
\newblock Liouville quantum gravity and {KPZ}.
\newblock {\em Inventiones Mathematicae}, 185(2):333--393, 2011.

\bibitem[Dub09]{dubedat2009sle}
J.~Dub{\'e}dat.
\newblock {S}{L}{E} and the free field: partition functions and couplings.
\newblock {\em Journal of the American Mathematical Society}, 22(4):995--1054,
  2009.

\bibitem[Dud02]{dudley2002real}
R.~M. Dudley.
\newblock {\em Real Analysis and Probability}, volume~74.
\newblock Cambridge University Press, 2002.

\bibitem[dW51]{de1951statistics}
H.~de~Wijs.
\newblock Statistics of ore distribution. part i: frequency distribution of
  assay values.
\newblock {\em Journal of the Royal Netherlands Geological and Mining Society},
  13:365--375, 1951.

\bibitem[dW53]{de1953statistics}
H.~de~Wijs.
\newblock Statistics of ore distribution. part ii: Theory of binomial
  distribution applied to sampling and engineering problems.
\newblock {\em Journal of the Royal Netherlands Geological and Mining Society},
  15:125--24, 1953.

\bibitem[Dyn80]{dynkin1980markov}
E.~Dynkin.
\newblock Markov processes and random fields.
\newblock {\em Bulletin of the American Mathematical Society}, 3(3):975--999,
  1980.

\bibitem[FJ98]{friedlander}
G.~Friedlander and M.~Joshi.
\newblock {\em Introduction to the Theory of Distributions.}
\newblock Cambridge University Press, Cambridge, 1998.

\bibitem[Fol99]{folland1999real}
G.~B. Folland.
\newblock {\em Real Analysis: modern techniques and their applications}, volume
  361.
\newblock Wiley New York, 1999.

\bibitem[Gan67]{MR0215331}
R.~Gangolli.
\newblock Positive definite kernels on homogeneous spaces and certain
  stochastic processes related to {L}\'evy's {B}rownian motion of several
  parameters.
\newblock {\em Ann. Inst. H. Poincar\'e Sect. B (N.S.)}, 3:121--226, 1967.
  \MR{0215331 (35 \#6172)}

\bibitem[GGS10]{gazzola2010polyharmonic}
F.~Gazzola, H.-C. Grunau, and G.~Sweers.
\newblock {\em Polyharmonic Boundary Value Problems: Positivity Preserving and
  Nonlinear Higher Order Elliptic Equations in Bounded Domains}.
\newblock Number 1991. Springer, 2010.

\bibitem[H{\"o}r03]{hormanderV1}
L.~H{\"o}rmander.
\newblock {\em The analysis of linear partial differential operators. I.
  Distribution theory and Fourier analysis. Reprint of the second (1990)
  edition}.
\newblock Springer, Berlin, 2003.

\bibitem[Jan97]{janson1997gaussian}
S.~Janson.
\newblock {\em Gaussian Hilbert Spaces}, volume 129.
\newblock Cambridge University Press, 1997.

\bibitem[Ken01]{kenyon2001dominos}
R.~Kenyon.
\newblock Dominos and the {G}aussian free field.
\newblock {\em Annals of Probability}, pages 1128--1137, 2001.

\bibitem[Kol40]{kolmogorov1940wienersche}
A.~N. Kolmogorov.
\newblock Wienersche spiralen und einige andere interessante kurven im
  hilbertschen raum.
\newblock In {\em CR (Dokl.) Acad. Sci. URSS}, volume~26, pages 115--118, 1940.

\bibitem[Kri10]{kristensson2010second}
G.~Kristensson.
\newblock {\em Second Order Differential Equations: Special Functions and Their
  Classification}.
\newblock Springer, 2010.

\bibitem[Kuo96]{kuo1996white}
H.-H. Kuo.
\newblock {\em White Noise Distribution Theory}.
\newblock CRC Press, 1996.

\bibitem[Kur07]{kurt2007entropic}
N.~Kurt.
\newblock Entropic repulsion for a class of {G}aussian interface models in high
  dimensions.
\newblock {\em Stochastic processes and their applications}, 117(1):23--34,
  2007.

\bibitem[Kur09]{kurt2009maximum}
N.~Kurt.
\newblock Maximum and entropic repulsion for a {G}aussian membrane model in the
  critical dimension.
\newblock {\em The Annals of Probability}, 37(2):687--725, 2009.

\bibitem[Lax02]{lax}
P.~D. Lax.
\newblock {\em Functional Analysis.}
\newblock John Wiley und Sons, 2002.

\bibitem[LD72]{landkof1972foundations}
N.~S. Landkof and A.~P. Doohovskoy.
\newblock {\em Foundations of modern potential theory}.
\newblock Springer-Verlag Berlin, 1972.

\bibitem[L{\'e}v40]{levy1940mouvement}
M.~P. L{\'e}vy.
\newblock Le mouvement {B}rownien plan.
\newblock {\em American Journal of Mathematics}, 62(1):487--550, 1940.

\bibitem[L{\'e}v45]{levy1945mouvement}
P.~L{\'e}vy.
\newblock Sur le mouvement {B}rownien d{\'e}pendant de plusieurs
  param{\`e}tres.
\newblock {\em CR Acad. Sci. Paris}, 220(420):3--1, 1945.

\bibitem[Man75]{mandelbrot1975geometry}
B.~B. Mandelbrot.
\newblock On the geometry of homogeneous turbulence, with stress on the fractal
  dimension of the iso-surfaces of scalars.
\newblock {\em Journal of Fluid Mechanics}, 72(03):401--416, 1975.

\bibitem[MC06]{mccullagh2006evidence}
P.~McCullagh and D.~Clifford.
\newblock Evidence for conformal invariance of crop yields.
\newblock {\em Proceedings of the Royal Society A: Mathematical, Physical and
  Engineering Science}, 462(2071):2119--2143, 2006.

\bibitem[McC02]{mccullagh2002statistical}
P.~McCullagh.
\newblock What is a statistical model?
\newblock {\em Annals of statistics}, pages 1225--1267, 2002.

\bibitem[McK63]{mckean1963brownian}
H.~McKean, Jr.
\newblock {B}rownian motion with a several-dimensional time.
\newblock {\em Theory of Probability \& Its Applications}, 8(4):335--354, 1963.

\bibitem[MO69]{molchanov1969symmetric}
S.~A. Molchanov and E.~Ostrovskii.
\newblock Symmetric stable processes as traces of degenerate diffusion
  processes.
\newblock {\em Theory of Probability \& Its Applications}, 14(1):128--131,
  1969.

\bibitem[Mon15]{mondalapplying}
D.~Mondal.
\newblock Applying {D}ynkin's isomorphism: an alternative approach to
  understand the {M}arkov property of the de wijs process.
\newblock {\em Bernoulli}, 2015.

\bibitem[MP10]{morters2010brownian}
P.~M{\"o}rters and Y.~Peres.
\newblock {\em Brownian motion}, volume~30.
\newblock Cambridge University Press, 2010.

\bibitem[MS]{millerimaginary3}
J.~Miller and S.~Sheffield.
\newblock Imaginary geometry {III}: reversibility of $ \mathrm{SLE}_\kappa $
  for $\kappa \in (4,8)$. 2012.
\newblock {\em arXiv preprint arXiv:1201.1498}.

\bibitem[MS12a]{millerimaginary1}
J.~Miller and S.~Sheffield.
\newblock Imaginary geometry {I}: Interacting {S}{L}{E}s.
\newblock {\em arXiv preprint arXiv:1201.1496}, 2012.

\bibitem[MS12b]{millerimaginary2}
J.~Miller and S.~Sheffield.
\newblock Imaginary geometry {II} : reversibility of $
  \mathrm{SLE}_{\kappa}(\rho_1,\rho_2) $ for $ \kappa\in(0,4) $.
\newblock {\em arXiv preprint arXiv:1201.1497}, 2012.

\bibitem[MS13]{millerimaginary4}
J.~Miller and S.~Sheffield.
\newblock Imaginary geometry {IV}: interior rays, whole-plane reversibility,
  and space-filling trees.
\newblock {\em arXiv preprint arXiv:1302.4738}, 2013.

\bibitem[Mun99]{Munkres1999}
J.~Munkres.
\newblock {\em Topology, 2nd edition}.
\newblock Prentice Hall, 1999.

\bibitem[MVN68]{mandelbrot1968fractional}
B.~B. Mandelbrot and J.~W. Van~Ness.
\newblock Fractional {B}rownian motions, fractional noises and applications.
\newblock {\em SIAM review}, 10(4):422--437, 1968.

\bibitem[MZ13]{melbourne2013weak}
I.~Melbourne and R.~Zweim{\"u}ller.
\newblock Weak convergence to stable {L}{\'e}vy processes for nonuniformly
  hyperbolic dynamical systems.
\newblock {\em arXiv preprint arXiv:1309.6429}, 2013.

\bibitem[New80]{newman1980self}
C.~Newman.
\newblock Self-similar random fields in mathematical physics.
\newblock In {\em Proceedings measure theory conference. DeKalb, Illinois},
  1980.

\bibitem[Olv10]{olver2010nist}
F.~W. Olver.
\newblock {\em NIST Handbook of Mathematical Functions}.
\newblock Cambridge University Press, 2010.

\bibitem[OW89]{ossiander1989certain}
M.~Ossiander and E.~C. Waymire.
\newblock Certain positive-definite kernels.
\newblock {\em Proceedings of the American Mathematical Society},
  107(2):487--492, 1989.

\bibitem[RV06]{rider2006noise}
B.~Rider and B.~Vir{\'a}g.
\newblock The noise in the circular law and the {G}aussian free field.
\newblock {\em arXiv preprint arXiv:math/0606663}, 2006.

\bibitem[RV13]{rhodes2013gaussian}
R.~Rhodes and V.~Vargas.
\newblock Gaussian multiplicative chaos and applications: a review.
\newblock {\em arXiv preprint arXiv:1305.6221}, 2013.

\bibitem[RY99]{revuz1999continuous}
D.~Revuz and M.~Yor.
\newblock {\em Continuous Martingales and {B}rownian Motion}, volume 293.
\newblock Springer Verlag, 1999.

\bibitem[Sak03]{sakagawa2003entropic}
H.~Sakagawa.
\newblock Entropic repulsion for a {G}aussian lattice field with certain finite
  range interaction.
\newblock {\em Journal of Mathematical Physics}, 44:2939, 2003.

\bibitem[Sak12]{sakagawa2012free}
H.~Sakagawa.
\newblock On the free energy of a {G}aussian membrane model with external
  potentials.
\newblock {\em Journal of Statistical Physics}, 147(1):18--34, 2012.

\bibitem[She07]{sheffield2007gaussian}
S.~Sheffield.
\newblock {G}aussian free fields for mathematicians.
\newblock {\em Probability Theory and Related Fields}, 139(3-4):521--541, 2007.

\bibitem[She10]{sheffield2010conformal}
S.~Sheffield.
\newblock Conformal weldings of random surfaces: {S}{L}{E} and the quantum
  gravity zipper.
\newblock {\em arXiv preprint arXiv:1012.4797}, 2010.

\bibitem[Sil07]{silvestre2007regularity}
L.~Silvestre.
\newblock Regularity of the obstacle problem for a fractional power of the
  {L}aplace operator.
\newblock {\em Communications on Pure and Applied Mathematics}, 60(1):67--112,
  2007.

\bibitem[Sim79]{simon1979functional}
B.~Simon.
\newblock {\em Functional Integration and Quantum Physics}, volume~86.
\newblock Academic Press, 1979.

\bibitem[Sko56]{skorokhod1956limit}
A.~Skorokhod.
\newblock Limit theorems for stochastic processes.
\newblock {\em Theory of Probability \& Its Applications}, 1(3):261--290, 1956.

\bibitem[Sko57]{skorokhod1957limit}
A.~Skorokhod.
\newblock Limit theorems for stochastic processes with independent increments.
\newblock {\em Theory of Probability \& Its Applications}, 2(2):138--171, 1957.

\bibitem[SS10]{schramm2010contour}
O.~Schramm and S.~Sheffield.
\newblock A contour line of the continuum {G}aussian free field.
\newblock {\em Probability Theory and Related Fields}, pages 1--34, 2010.

\bibitem[Ste70]{stein1970singular}
E.~M. Stein.
\newblock {\em Singular Integrals and Differentiability Properties of
  Functions}, volume~2.
\newblock Princeton {U}niversity {P}ress, 1970.

\bibitem[SW71]{stein1971introduction}
E.~M. Stein and G.~L. Weiss.
\newblock {\em Introduction to Fourier Analysis on Euclidean Spaces (PMS-32)},
  volume~1.
\newblock Princeton University Press, 1971.

\bibitem[SW13]{sun2013uniform}
X.~Sun and W.~Wu.
\newblock Uniform spanning forests and the bi-{L}aplacian {G}aussian field.
\newblock {\em arXiv preprint arXiv:1312.0059v1}, 2013.

\bibitem[Tao10]{tao-epsilon}
T.~Tao.
\newblock {\em An epsilon of room, {I}: {R}eal analysis}, volume 117 of {\em
  Graduate Studies in Mathematics}.
\newblock American Mathematical Society, Providence, RI, 2010.
\newblock Pages from year three of a mathematical blog. \MR{2760403
  (2012b:42002)}

\bibitem[Tri83]{triebel1983theory}
H.~Triebel.
\newblock {\em Theory of Function Spaces, volume 78 of {M}onographs in
  {M}athematics}.
\newblock Birkh{\"a}user Verlag, Basel, 1983.

\bibitem[Won70]{wongRF}
E.~Wong.
\newblock {\em Stochastic Processes in Information and Dynamical Systems}.
\newblock New York: McGraw-Hill, 1970.

\bibitem[Xia13]{xiao2013recent}
Y.~Xiao.
\newblock Recent developments on fractal properties of {G}aussian random
  fields.
\newblock In {\em Further Developments in Fractals and Related Fields}, pages
  255--288. Springer, 2013.

\bibitem[Yag57]{yaglom1957some}
A.~M. Yaglom.
\newblock Some classes of random fields in n-dimensional space, related to
  stationary random processes.
\newblock {\em Theory of Probability \& Its Applications}, 2(3):273--320, 1957.

\end{thebibliography}

\medskip

\texttt{lodhia@math.mit.edu} \\
\texttt{sheffield@math.mit.edu} \\
\texttt{xinsun89@math.mit.edu} \\
\texttt{sswatson@math.mit.edu} \\
Department of Mathematics \\ 
Massachusetts Institute of Technology \\ 
Cambridge, MA, USA

\end{document}